\address{Department of Mathematics, Graduate School of Science, Kyoto University, Kitashirakawa-Oiwake-cho, Sakyo-ku, Japan}
\email{sanda.fumihiko.5s@kyoto-u.ac.jp}
    \newtheorem{theorem}{Theorem}[section]
  \newtheorem{definition}[theorem]{Definition}
  \newtheorem{remark}[theorem]{Remark} 
  \newtheorem{lemma}[theorem]{Lemma}
  \newtheorem{proposition}[theorem]{Proposition}
  \newtheorem{assumption}[theorem]{Assumption}
  \newtheorem{condition}[theorem]{Condition}
 \newcommand{\ainf}{A_\infty}
 \newcommand{\A}{\mathscr{A}}
 \newcommand{\B}{\mathscr{B}} 
 \newcommand{\C}{\mathbb{C}}
 \newcommand{\LL}{\mathcal{L}}
 \newcommand{\Q}{\mathbb{Q}}
 \newcommand{\p}{\mathfrak{p}}
 \newcommand{\q}{\mathfrak{q}}
 \newcommand{\R}{\mathbb{R}}
 \newcommand{\X}{\mathbb{X}}
 \newcommand{\Y}{\mathscr{Y}}
 \newcommand{\Z}{\mathbb{Z}}
 \newcommand{\ke}{\mathbb{K}}
 \newcommand{\enm}{m^\mathscr{A}}
 \newcommand{\ob}{\mathrm{Ob}}
 \newcommand{\Hom}[3]{\mathrm{hom}^\bullet_{#1}(#2,#3)}
 \newcommand{\coh}[2]{H^\bullet(#1; #2)}
 \newcommand{\uniho}{1_\mathrm{Hoch}}
 \newcommand{\muk}[2]{\langle #1, #2 \rangle_\mathrm{Muk}}
 \newcommand{\cyc}[2]{\langle #1, #2 \rangle_\mathrm{cyc}}
 \newcommand{\tr}{\mathrm{tr}}
 \newcommand{\str}{\mathrm{str}}
 \newcommand{\im}{\mathrm{Im}}
 \newcommand{\can}{\mathrm{can}}
 \newcommand{\pd}[3]{\langle #1, #2 \rangle_{#3}}
 \newcommand{\Fuk}{\mathrm{Fuk}}
 \newcommand{\fuk}[1]{\mathrm{Fuk}(X)^{#1}}
 \newcommand{\qcoh}{QH^\bullet(X)}
 \newcommand{\crit}{\mathrm{Crit}(X^\vee)}
 \newcommand{\mc}{H^1(L; \Lambda_{=0})}
 \newcommand{\gw}{\mathrm{GW}}
\begin{document}
  \title{Computation of quantum cohomology from Fukaya categories}
  \author{Fumihiko Sanda} 
  \date{\today}
\begin{abstract}
Assume the existence of a Fukaya category $\mathrm{Fuk}(X)$
of a compact symplectic manifold $X$ with some expected properties. 
In this paper, we show $\mathscr{A} \subset \mathrm{Fuk}(X)$ split generates a summand
$\mathrm{Fuk}(X)_e \subset \mathrm{Fuk}(X)$ corresponding to an idempotent
$e \in QH^\bullet(X)$ if the Mukai pairing of $\mathscr{A}$ is perfect.
Moreover we show $HH^\bullet(\mathscr{A}) \cong QH^\bullet(X) e$.   
As an application we compute the quantum cohomology and the Fukaya category of a blow-up of
$\mathbb{C} P^2$ at four points with a monotone symplectic structure.   
\end{abstract}
\maketitle
   \setcounter{tocdepth}{1}
   \tableofcontents
\section{Introduction}
\subsection{The main result}
Let $X$ be a compact symplectic manifold of dimension $2n$ and 
$\LL$ be a finite set of compact oriented spin weakly unobstructed Lagrangian submanifolds
such that $L_1 \pitchfork L_2$ for $L_1 \neq L_2 \in \LL$.
In this setting, Abouzaid-Fukaya-Oh-Ohta-Ono \cite{afooo} have announced a construction of a Fukaya category $\fuk{}$ (see Assumption \ref{many}). 
This is a cyclic $\ainf$ category defined over the universal Novikov field $\Lambda$ whose objects are $L \in \LL$ with Maurer-Cartan elements $b$. 
The morphisms of the cohomology category $H^\bullet(\fuk{})$ are defined by the Lagrangian intersection Floer theory \cite{fooo1}. 
Moreover, they have constructed open-closed maps
\[\p : HH_\bullet(\fuk{}) \to QH^{\bullet+n}(X),\ \ \q : \qcoh \to HH^\bullet(\fuk{})\]
with some properties (see Assumption \ref{oc}), here $HH_\bullet(\fuk{})$ (resp. $HH^\bullet(\fuk{})$) denotes the Hochschild homology (resp. cohomology) of $\fuk{}$ and $\qcoh$ denotes the small quantum cohomology of $X$.
Let $e$ be an idempotent of $\qcoh$. 
Using the morphism $\q$, we can define a subcategory $\fuk{}_e \subset \fuk{} $
corresponding to $e$ (see \S \ref{decomp}).
The main result of this paper is a version of the split generation criterion of Fukaya categories \cite{afooo}, \cite{abogeo}.
Let $\A \subset \fuk{}$ be a subcategory. 
Recall that $HH_\bullet(\A)$ is equipped with a canonical pairing called the Mukai pairing \cite {shefor}, \cite{shkhir}.
Note that $HH_\bullet(\A)$ is finite dimensional and the Mukai pairing is non-degenerate if $\A$ is smooth.
\begin{theorem}[Theorem \ref{aut}]
Assume the existence of $\fuk{}$ and open-closed maps $\p, \q.$
Let $\A$ be a subcategory of $\fuk{}$. Suppose that the Mukai pairing on $HH_\bullet(\A)$ is perfect.
Then there exists an idempotent $e$ such that $H^\bullet(\A)$ split generates the derived category $D^\pi \fuk{}_e$.
Moreover, $\p$ induces an isomorphism $HH_\bullet(\A) \cong QH^{\bullet+n}(X)e$ and $\q$ induces a ring isomorphism $\qcoh e \cong HH^\bullet(\A)$.
\end{theorem}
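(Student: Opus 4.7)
The plan is to push the perfect Mukai pairing on $HH_\bullet(\A)$ forward along $\p$ to extract an idempotent $e \in \qcoh$, invoke a split generation criterion applicable because $e$ lies in the image of $\p|_\A$, and finally dualise the resulting homology isomorphism to obtain the ring statement on cohomology.

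First, I would invoke a Cardy-type identity between the two canonical pairings via $\p$: for $\alpha, \beta \in HH_\bullet(\A)$, one expects (up to a standard sign)
\[\muk{\alpha}{\beta} = \pd{\p(\alpha)}{\p(\beta)}{X},\]
where the right hand side is the Poincaré pairing on $\qcoh$. This compatibility between the open-closed map and the two natural pairings is the crucial symplectic input; I expect it to be part of Assumption \ref{oc}, or derivable from its axioms together with the cyclic structure of $\fuk{}$. Granted it, perfectness of the Mukai pairing forces $\p|_{HH_\bullet(\A)}$ to be injective and makes the Poincaré pairing non-degenerate on its image $I := \im(\p|_{HH_\bullet(\A)}) \subset QH^{\bullet+n}(X)$. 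Next, the expected module-compatibility $\p(\q(c) \cap \alpha) = c \star \p(\alpha)$ for $c \in \qcoh$ shows that $I$ is stable under quantum multiplication, hence is an ideal of $\qcoh$. Because $\qcoh$ is a commutative Frobenius algebra under the Poincaré pairing, the orthogonal complement of an ideal is again an ideal, and non-degeneracy on $I$ then forces a decomposition $\qcoh = I \oplus I^\perp$ as a sum of two ideals. Writing the unit as $1 = e + f$ along this splitting yields an idempotent $e$ with $I = \qcoh \cdot e$, and by construction $e \in \im(\p|_\A)$.

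The condition $e \in \im(\p|_\A)$ is precisely the hypothesis of the Abouzaid--AFOOO split generation criterion, adapted to the $e$-summand $\fuk{}_e$; applying it gives that $H^\bullet(\A)$ split generates $D^\pi \fuk{}_e$. The first asserted isomorphism $\p|_\A : HH_\bullet(\A) \xrightarrow{\sim} QH^{\bullet+n}(X) e$ is already proved by step one. For the ring isomorphism, split generation combined with Morita invariance of Hochschild cohomology gives $HH^\bullet(\A) \cong HH^\bullet(\fuk{}_e)$; the map $\qcoh \cdot e \hookrightarrow \qcoh \xrightarrow{\q} HH^\bullet(\fuk{}) \twoheadrightarrow HH^\bullet(\fuk{}_e) \cong HH^\bullet(\A)$ is then a ring homomorphism, and it can be identified as the pairing-dual of $\p|_\A$ under the Mukai and Poincaré pairings, from which bijectivity follows.

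The main obstacle, I expect, is the careful verification of the Cardy relation and the module-intertwining property of $(\p, \q)$ in the curved cyclic $\ainf$ framework of $\fuk{}$ over the Novikov field. Once these two structural properties of the open-closed maps are in place, the extraction of $e$ and the deduction of the ring isomorphism from split generation are standard Frobenius-algebra manipulations.
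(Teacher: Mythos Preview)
Your argument is correct, and the ingredients you invoke---the Cardy relation (Assumption~\ref{oc}\,(3)) and the module compatibility $\p(\q(c)\cap\alpha)=c*\p(\alpha)$ (Equation~(\ref{modoc}))---are exactly the structural inputs the paper provides. However, the route you take to extract the idempotent differs from the paper's. The paper does not go through a Frobenius-algebra orthogonal decomposition of $\qcoh$. Instead it observes that the Mukai pairing, read against the cyclic pairing, gives an operator $Z:HH_\bullet(\A)\to HH^\bullet(\A)[n]$, and Equation~(\ref{Z}) says $\q_\A\circ\p_\A=\pm Z$. Perfectness of the Mukai pairing is equivalent to $Z$ being an isomorphism, so one can solve $\q_\A\circ\p_\A(\X)=\uniho$ directly and set $e:=\p_\A(\X)$; idempotency then drops out of the module property in one line. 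This approach has the virtue of giving $\q_\A(e)=\uniho$ for free, hence $\A\subset\Fuk(X)^\LL_e$ immediately. In your Frobenius route that containment is a step you skipped: to apply Theorem~\ref{spgenfuk} you need $\q_{L,b}(e)=1_L$ for every $(L,b)\in\A$, and this follows from $1_X-e\in I^\perp$ together with the duality~(\ref{defoc}) and perfectness of $\cyc{\,}{\,}$ on $HH^\bullet(\A)\otimes HH_\bullet(\A)$, but it should be said.

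For the ring isomorphism, the paper also avoids Morita invariance: once $\p_\A:HH_\bullet(\A)\to QH^{\bullet+n}(X)e$ is known to be an isomorphism (injectivity from $Z$ invertible, image in $\qcoh e$ from Lemma~\ref{3.11}, surjectivity because $e\in\im\p_\A$ and $\p_\A$ is a module map), the duality~(\ref{defoc}) forces $\q_\A|_{\qcoh e}$ to be injective, and a dimension count finishes. Your detour through $HH^\bullet(\fuk{}_e)$ is valid but unnecessary, since the pairing-dual argument you sketch at the end already suffices on its own.
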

\begin{remark}
A similar statement has independently been obtained by Ganatra \cite{ganaut}. 
\end{remark}
\subsection{Applications}
As an application of the main result, we construct field factors of $\qcoh$ from the existence of certain Lagrangian submanifolds. 

We first consider an oriented Lagrangian torus with a spin structure $L$ such that all non-constant holomorphic disks bounded by $L$ have Maslov indices at least two (Condition \ref{positive}).
By counting holomorphic disks with Maslov index two, we can define a potential function $W_L$.
A critical point of $W_L$ gives a Maurer-Cartan element $b$ such that
the Floer cohomology $HF^\bullet(L, b)$ is not equal to zero. 
\begin{proposition}[Proposition \ref{torus} and Proposition \ref{tori}]
Let $L \in \LL$ be a torus with a spin structure which satisfies Condition \ref{positive}.
Let $b$ be a Maurer-Cartan element corresponding to a non-degenerate critical point of $W_L.$
Then  there exists an idempotent $e$ with $\qcoh e \cong \Lambda.$
Moreover, if $e'$ is an idempotent corresponding to another non-degenerate critical point, then the quantum product $e*e'$ is equal to zero. 
\end{proposition}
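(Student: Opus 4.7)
The strategy is to apply Theorem 1.1 to the one-object subcategory $\A \subset \fuk{}$ whose sole object is $(L, b)$. Once we check that the Mukai pairing on $HH_\bullet(\A)$ is perfect, Theorem 1.1 produces an idempotent $e \in \qcoh$ and a ring isomorphism $\qcoh e \cong HH^\bullet(\A)$. The first statement of the proposition then reduces to computing $HH^\bullet(\A) \cong \Lambda$ together with verifying perfectness of the Mukai pairing.

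Both computations follow from the structure of the Floer $\ainf$-algebra at a non-degenerate critical point. Under Condition \ref{positive}, the theory of Fukaya-Oh-Ohta-Ono identifies the cohomology of $\hom_{\fuk{}}((L,b),(L,b))$ with $H^\bullet(L;\Lambda)$, and at a non-degenerate critical point $b$ of $W_L$ the induced product is controlled by the non-degenerate Hessian of $W_L$ at $b$. A standard argument then identifies this $\ainf$-algebra (up to quasi-isomorphism) with the Clifford algebra on the Hessian form, which in the cases of interest is a matrix algebra over $\Lambda$, in particular separable. Separability gives $HH^\bullet(\A) \cong Z \cong \Lambda$ and $HH_\bullet(\A) \cong \Lambda$, with the Mukai pairing identified with the ordinary trace pairing, hence perfect. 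Applying Theorem 1.1 then yields the first statement.

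For the second statement, let $b'$ be another non-degenerate critical point, producing by the same procedure an idempotent $e' \in \qcoh$ with $\qcoh e' \cong \Lambda$. Since $\qcoh$ is commutative, both $e$ and $e'$ are central, and $ee'$ is a central idempotent contained in each of the fields $\qcoh e$ and $\qcoh e'$; thus $ee' \in \{0,e\} \cap \{0,e'\}$, so either $ee' = 0$ or $e = e'$. To exclude $e = e'$ when $b \neq b'$, I would use the compatibility of $\q$ with the closed-open map into $HF^\bullet((L,b),(L,b))$: the induced character $\qcoh \to \qcoh e \cong \Lambda$ is the Kodaira-Spencer character attached to $b$, and distinct non-degenerate critical points of $W_L$ give distinct such characters. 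Hence $e \neq e'$ and $ee' = 0$.

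The main obstacle is the $\ainf$-level input, namely identifying the endomorphism algebra of $(L,b)$ with a quasi-isomorphic model of a separable algebra so that the Hochschild invariants behave as described and the Mukai pairing is perfect; once this is in place the rest of the argument is a formal consequence of Theorem 1.1 together with elementary manipulations of idempotents in the commutative ring $\qcoh$.
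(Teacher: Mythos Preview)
Your treatment of the first assertion matches the paper's: identify the endomorphism $\ainf$-algebra of $(L,b)$ with the Clifford algebra $C\ell_n$, use its formality and smoothness (separability) to obtain a one-dimensional Hochschild homology with perfect Mukai pairing, and apply Theorem~\ref{aut}. One quibble: for odd $n$ the ungraded $C\ell_n$ is a product of two matrix algebras rather than a single one, but it is still separable and its $\Z/2\Z$-graded Hochschild invariants are one-dimensional, so the conclusion is unaffected.

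For the orthogonality of $e$ and $e'$ there is a genuine gap. Your dichotomy $e*e'\in\{0,e\}\cap\{0,e'\}$ is valid, but the step ruling out $e=e'$ asserts that distinct critical points of $W_L$ induce distinct ring characters $\qcoh\to\Lambda$ via $\q_{L,b}$. Within the paper's framework the only direct computation of $\q_{L,b}$ is Assumption~\ref{oc}(2), which evaluates it on $H^2_c(X\setminus L;\C)$; for classes coming from $H^1(L)$ via the connecting map this yields logarithmic derivatives of $W_L$, which vanish at \emph{every} critical point, and there is no general reason the remaining classes separate two critical points sharing a critical value. What you are invoking is essentially injectivity of a Kodaira--Spencer map, a substantial mirror-symmetry input not contained in Assumptions~\ref{many} and~\ref{oc}. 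The paper proceeds differently: it proves $HF^\bullet\big((L,b),(L,b')\big)=0$ directly (Lemma~\ref{4.3}: when $b\neq b'$ this Floer group is a $C\ell_{2n}$-module of dimension strictly less than $2^n$, hence zero), and then applies Lemma~\ref{orth}, which converts vanishing of morphisms between the two objects into orthogonality of the images under $\p$, giving $e_{(L,b)}*e_{(L,b')}=0$.
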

We next consider an even dimensional oriented rational homology Lagrangian sphere with a spin structure. 
We note that such a Lagrangian is weakly unobstructed and $0$ is a unique Maurer-Cartan element \cite{fooo1}.

\begin{proposition}[Proposition \ref{sphere}]
Let $L \in \LL$ be an even dimensional oriented spin rational homology sphere with the Maurer-Cartan element $b=0$.
Suppose that $HF^\bullet(L,0) \cong \Lambda \times \Lambda$ as a ring. 
Then there exist idempotents $e_L^\pm$ such that $\qcoh e^\pm_L \cong \Lambda$ and $e^+_L*e^-_L=0.$
Moreover these idempotents are eigenvectors of the operator $c_1*$ with the same eigenvalue, here $c_1$ denotes the first Chern class of the Tangent bundle of $X$.
\end{proposition}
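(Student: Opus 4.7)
The plan is to apply Theorem \ref{aut} to the one-object full subcategory $\A \subset \fuk{}$ with single object $(L,0)$, whose cohomology algebra is $HF^\bullet(L,0) \cong \Lambda \times \Lambda$. First I would verify the hypotheses: because $\Lambda \times \Lambda$ is a separable commutative $\Lambda$-algebra, its positive-degree Hochschild cohomology vanishes, and so the $A_\infty$ structure on $HF^\bullet(L,0)$ is intrinsically formal. Consequently $\A$ is quasi-isomorphic to the ordinary algebra $\Lambda \times \Lambda$, and the standard computation of Hochschild invariants of a semisimple algebra yields $HH_\bullet(\A) \cong HH^\bullet(\A) \cong \Lambda \times \Lambda$. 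On this separable algebra the Mukai pairing agrees with the trace pairing, which is non-degenerate, so the hypotheses of Theorem \ref{aut} are satisfied.

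Applying the main theorem then produces an idempotent $e \in \qcoh$ together with a ring isomorphism $\q \colon \qcoh e \xrightarrow{\sim} HH^\bullet(\A) \cong \Lambda \times \Lambda$. I would define $e_L^+$ and $e_L^-$ to be the preimages under $\q$ of the two primitive orthogonal idempotents $(1,0)$ and $(0,1)$ of $\Lambda \times \Lambda$. Then $e_L^+ + e_L^- = e$, $e_L^+ e_L^- = 0$, and $\qcoh e_L^\pm \cong \Lambda$, which establishes the first part of the statement.

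For the eigenvalue assertion, note that since $\q$ is a ring map, the element $c_1 \ast e_L^\pm$ corresponds under $\q$ to $\q(c_1)\cdot(1,0)$ and $\q(c_1)\cdot(0,1)$, respectively, in $\Lambda \times \Lambda$. Hence $e_L^+$ and $e_L^-$ are both eigenvectors of $c_1\ast$ with a common eigenvalue if and only if $\q(c_1)$ lies on the diagonal $\Lambda\cdot(1,1)$. To verify this I would use the expected compatibility of the closed-open map with the obstruction class of a weakly unobstructed Lagrangian, namely that $\q(c_1)$ acts on $1_L \in HF^\bullet(L,0)$ by multiplication by $m_0(L)$. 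Since $L$ is weakly unobstructed, $m_0(L)$ is a scalar multiple of $1_L$, and under the identification $HH^\bullet(\A) \cong \Lambda \times \Lambda$ the restriction to $HF^\bullet(L,0)$ is the identity; thus $\q(c_1) = m_0(L)\cdot(1,1)$ and the common eigenvalue is $m_0(L)$.

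The main hurdle is the last step: extracting the identity $\q(c_1)\cdot 1_L = m_0(L)\cdot 1_L$ cleanly from the axioms bundled into Assumption \ref{oc}. Once this compatibility is in hand, the rest is a direct invocation of Theorem \ref{aut} together with the formality of the semisimple $A_\infty$-algebra $HF^\bullet(L,0)$.
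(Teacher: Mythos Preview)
Your approach is correct and essentially the same as the paper's: both exploit that $\Lambda\times\Lambda$ is formal and smooth so that Theorem~\ref{aut} applies to the one-object subcategory $\{(L,0)\}$; the paper merely adds an explicit formula $e_L^\pm=\pm[L]/(4\sqrt{\beta_L})+[L]^2/(8\beta_L)$ coming from the cubic relation $[L]^3=4\beta_L[L]$ (Proposition~\ref{cubic}). Your ``main hurdle'' is exactly Proposition~\ref{chern}, derived from the divisor axiom (Assumption~\ref{oc}(2)), and since $b_+=0\in H^1(L;\Lambda_+)$ its hypothesis is trivially satisfied, giving $\q_{L,0}(c_1)=W_{L,0}\cdot 1_L$ and hence the common eigenvalue $W_{L,0}$.
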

\subsection{Examples}
As a first example, we consider a compact toric Fano manidold $X$.
In \cite{ritcir}, Ritter has proved that the torus fibers of $X$ split generate the Fukaya category of $X$
if the potential functions of the torus  fibers are Morse.
We give another proof of this statement (see Theorem \ref{Toric}).

As a next example we consider a blow-up of $\C P^2$ at four points with a monotone symplectic structure. 
By a result of Li-Wu \cite{lilag}, we can construct many Lagrangian spheres.
Moreover, Pascaleff-Tonkonog \cite{paswal} construct a monotone Lagrangian torus and compute its potential function. 
Using these results, we prove the following:
\begin{theorem}[Theorem \ref{blow-up}]
Let $X$ be a blow-up of $\C P^2$ at four points with a monotone symplectic structure.
Then there exist four Lagrangian spheres and two monotone Lagrangian tori with Maurer Cartan elements 
such that these objects split generate the Fukaya category.
Moreover $\qcoh$ is semi-simple.
\end{theorem}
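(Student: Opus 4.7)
\emph{Proof plan.} The strategy is to apply the main theorem (Theorem \ref{aut}) together with Proposition \ref{torus} and Proposition \ref{sphere} to produce enough mutually orthogonal idempotents of $\qcoh$ to exhaust it. First I note that $\dim_\Lambda \qcoh = 7$, since the rational cohomology of the four-point blow-up of $\C P^2$ consists of the unit, five divisor classes (the pullback of the hyperplane together with four exceptional divisors), and the point class. It therefore suffices to exhibit seven pairwise orthogonal idempotents $e_1, \dots, e_7$ in $\qcoh$, each with $\qcoh e_i \cong \Lambda$, arising from Lagrangians in $\LL$.

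Next, using the work of Li-Wu \cite{lilag}, I would select four Lagrangian spheres $S_1, \dots, S_4$ representing $(-2)$-classes in $H_2(X; \Z)$ (for instance, differences of exceptional divisors and related vanishing cycles). Each $S_i$ is a monotone even-dimensional oriented spin homology sphere, so the Maurer-Cartan element $b=0$ is the only one to consider. I would compute $HF^\bullet(S_i, 0)$; as a 2-dimensional $\Lambda$-algebra it is either $\Lambda \times \Lambda$ or $\Lambda[\epsilon]/(\epsilon^2)$, and whenever it is the split algebra, Proposition \ref{sphere} produces an orthogonal pair $e_{S_i}^\pm$ of idempotents with $\qcoh e_{S_i}^\pm \cong \Lambda$, sharing a common eigenvalue of $c_1 *$. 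In parallel, I would invoke the construction and potential computation of Pascaleff-Tonkonog \cite{paswal} to produce two monotone Lagrangian tori $T_1, T_2 \in \LL$ together with explicit potential functions $W_{T_j}$; enumerating the non-degenerate critical points of each $W_{T_j}$ and applying Proposition \ref{torus} yields additional idempotents whose $c_1 *$-eigenvalues are the corresponding critical values.

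The main obstacle is the combinatorial bookkeeping required to show that the resulting idempotents are pairwise orthogonal and sum to the unit of $\qcoh$. Orthogonality between idempotents that lie in distinct eigenspaces of $c_1 *$ is automatic, so the key computation is to tabulate the eigenvalues: the critical values of $W_{T_1}, W_{T_2}$ from \cite{paswal} on the torus side, and the eigenvalues coming from the four spheres on the other, arranging the Lagrangians so that the combined list of eigenvalues (with multiplicities) exactly matches the spectrum of $c_1 *$ acting on $\qcoh$. Within a single eigenvalue, orthogonality of the pair $e_{S_i}^\pm$ and of the torus idempotents is guaranteed by the last clauses of Propositions \ref{sphere} and \ref{torus}, while cross-Lagrangian orthogonality in a degenerate eigenspace would need to be verified by inspecting the images under $\q$.

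Once seven pairwise orthogonal idempotents of the stated form are in hand, their sum is a central idempotent $e \in \qcoh$ with $\qcoh e$ of dimension $7 = \dim_\Lambda \qcoh$, hence $e$ is the identity. Theorem \ref{aut} applied to the subcategory $\A$ spanned by the four spheres and the two tori with their chosen Maurer-Cartan elements then yields that $\A$ split generates $D^\pi \fuk{}_{e} = D^\pi \fuk{}$, and the decomposition $\qcoh \cong \prod_{i=1}^{7} \qcoh e_i \cong \Lambda^{7}$ gives semi-simplicity.
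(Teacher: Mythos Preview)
Your overall strategy is correct in spirit, but there are two substantive gaps that the paper's proof handles and your plan does not.

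\textbf{The key missing step is proving $\beta_{S_i}\neq 0$.} You write ``I would compute $HF^\bullet(S_i,0)$'' and branch on whether it is $\Lambda\times\Lambda$ or $\Lambda[\epsilon]/(\epsilon^2)$, but you give no mechanism for deciding which. This is precisely the heart of the argument. The paper does \emph{not} compute $\beta_{S_i}$ directly; instead it argues by contradiction. Using Proposition~\ref{spheres} (which you do not invoke) and the intersection pattern $[S_i]\cup[S_j]\neq 0$ iff $|i-j|\le 1$, one first shows all $\beta_{S_i}$ and all $W_{S_i,0}$ coincide. If that common $\beta$ were zero, Proposition~\ref{cubic} would make $[S_1],\dots,[S_4],[S_1]*[S_1]$ five linearly independent \emph{nilpotent} elements sharing a single $c_1*$-eigenvalue; together with the three torus idempotents $e_{(L,b_1)},e_{(L,b_2)},e_{(L,b_3)}$ this would give eight linearly independent vectors in the seven-dimensional space $\qcoh$, a contradiction. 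This dimension count is what forces $\beta_{S_i}\neq 0$, and your plan has nothing to replace it.

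\textbf{Your idempotent count does not close.} Four spheres with $\beta\neq 0$ produce eight idempotents $e^\pm_{S_i}$, already more than seven, so they cannot be pairwise orthogonal; you would have to determine exactly which coincide, and ``inspecting the images under $\q$'' is not a concrete method. The paper sidesteps this entirely: it never tries to list seven orthogonal idempotents. Instead it shows that $e_{(L,b_1)},e_{(L,b_2)},[S_1],[S_2],[S_3],[S_4],[S_1]*[S_1]$ form a \emph{basis} of $\qcoh$ (using that the $W_{S_i,0}$ are rational while two of the torus critical values are irrational, forcing $W_{S_i,0}=W_{L,b_3}=-3T$), and then invokes Proposition~\ref{sphere} once to conclude semi-simplicity and Theorem~\ref{spgenfuk} for generation.

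A minor point: the ``two monotone Lagrangian tori with Maurer--Cartan elements'' in the statement are the single Pascaleff--Tonkonog torus $L$ equipped with two different bounding cochains $b_1,b_2$, not two distinct underlying tori $T_1,T_2$. The third critical point $b_3$ is what makes the contradiction argument above work, but $(L,b_3)$ is not needed among the generators since its eigenvalue coincides with that of the spheres.
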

\begin{remark}
Semi-simplicity of the quantum cohomology has already been known $($\cite{baysim}, \cite{craqua}$)$.
But our proof does not use explicit computation of Gromov-Witten invariants.
\end{remark}
\subsection{Plan of the paper}
In \S \ref{secainf}, we recall some definitions on $\ainf$ categories and Hochschild invariants.
Moreover, we show a split generation criterion for cyclic $\ainf$ categories.
In \S \ref{sec3}, we review Lagrangian intersection Floer theory.
In \S \ref{sec4}, we give some assumptions on Fukaya categories and open-closed maps.
Using these assumptions, we show the main result.
In \S \ref{sec5}, we construct field factors of the quantum cohomology by using Floer theory of Lagrangian submanifolds.
In \S \ref{ex}, we compute the Fukaya category and the quantum cohomology of a blow-up of $\C P^2$ at four points.
\subsection{Acknowledgements}
The author would like to express his gratitude to his supervisor Professor Shinobu Hosono. 
Some parts of this paper is a extended version of the author's Ph.D. thesis. 
The author would like to thank Yuuki Shiraishi for explaining to the author the importance of Shklyarov's pairing, 
Sheel Ganatra for informing the author that he also has obtained a version of the main result after the authors talk at Kyoto(\cite{talk}).
The author also would like to thank Professor Hiroshi Ohta, Professor Kaoru Ono, Daiske Inoue, Tatsuki Kuwagaki, Makoto Miura for helpful discussions.
This work was supported by JSPS KAKENHI Grant Number 12J09640,
JSPS  Grant-in-Aid for Scientific Research number 23224002, 
and JSPS  Grant-in-Aid for Scientific Research number15H02054.


\section{Preliminaries on $\ainf$ categories}\label{secainf}
In \S \ref{secainf},  we recall some definitions about $\ainf$ categories and Hochschild invariants.
We mainly follow \cite{shefor}.
Throughout this section, $\ke$ denotes a field of characteristic zero.
Complexes (chain or cochain) mean
$2$-periodic complexes, i.e., $\Z/2\Z$-graded vector spaces with odd degree morphisms $d$ such that 
$d^2=0.$
For a homogeneous element $x$ of a $\Z/2\Z$-graded vector space, 
$|x|$ denotes the degree of $x$.
Set $|x|':=|x|-1$.
\subsection{$\ainf$ categories}\label{subainf}
We first recall the definition of $\ainf$ categories.
\begin{definition}\
\begin{enumerate}
\item A curved $\ainf$ category $\A$ consists of the following data:
\begin{itemize}
\item A set of objects $\ob \A$.
\item $\Z/2\Z$-graded $\ke$-vector spaces $\Hom{\A}{X}{Y}$ for $X,Y \in \ob \A$.
\item $\ke$-linear maps 
    \[\enm_s :\Hom{\A}{X_0}{X_1} \otimes \Hom{\A}{X_1}{X_2} \otimes 
       \cdots \otimes \Hom{\A}{X_{s-1}}{X_s} \to \Hom{\A}{X_0}{X_s}\]
of degree $2-s \ (0 \le s)$ which satisfy the $\ainf$-relations, i.e., 
\begin{align*}
  \sum_{\substack{0 \leq i,  j \leq s \\ i+j \leq s }} (-1)^{|x_1|'+\cdots +|x_i|'}
  \enm_{s-j+1}(x_1, \dots, x_i, \enm_j(x_{i+1},\dots, x_{i+j}),x_{i+j+1}, \dots, x_s)=0,
\end{align*}
here $x_i$ are homogeneous elements of $\Hom{\A}{X_{i-1}}{X_i}$. 
\end{itemize}

\item A curved $\ainf$ category $\A$ is called finite if $\Hom{\A}{X}{Y}$ is a finite dimensional
         $\ke$-vector space for each $X,Y$. 

\item A curved $\ainf$ category $\A$ is called $($strictly$)$ unital if for each $X \in \ob\A$,
         there exists an even element $1_X \in \Hom{\A}{X}{X}$ called a unit of $X$ such that
\begin{align*}\label{unit}
\enm_s(x_1,\dots 1_X,\dots x_{s-1})=0 \ \text{for} \ s \neq 2,\  
\enm_2(1_X, x)=(-1)^{|x|}\enm_2(x, 1_X)=x.
\end{align*}

\item An $\ainf$ category is a curved $\ainf$ category with $\enm_0=0$.
\end{enumerate}
\end{definition}
Let $\A$ be an $\ainf$ category.
Then, by the $\ainf$-relations, we see that $\enm_1\circ \enm_1=0$ and $\enm_2$ naturally induce maps  
\[ [\enm_2] : H^\bullet\Hom{\A}{X_0}{X_1} \otimes H^\bullet\Hom{\A}{X_1}{X_2}
    \to H^\bullet\Hom{A}{X_0}{X_2},\]
where the cohomology of $\Hom{A}{X}{Y}$ with respect to $\enm_1$ is denoted by $H^\bullet\Hom{A}{X}{Y}$.
We define the cohomology $\ainf$ category $H^\bullet(\A)_\infty$ of $\A$ by the following:
\begin{itemize}
\item $\ob H^\bullet(\A)_\infty:=\ob \A$.
\item $\Hom{{H^\bullet(\A)_\infty}}{X}{Y}:=H^\bullet \Hom{\A}{X}{Y}$.
\item The $\ainf$ structure maps $m_s$ of $H^\bullet(\A)_\infty$ are zero for $s \neq2$ and $[\enm_2]$ for $s=2$.
\end{itemize}
We can easily see that these data define an $\ainf$ category.
If $\A$ is unital, then we can also define the cohomology category $H^\bullet(\A)$.
The objects are the same as $H^\bullet(\A)_\infty$ and 
morphisms are defined by 
\[\Hom{{H^\bullet(\A)}}{X}{Y}:=\Hom{{H^\bullet(\A)_\infty}}{Y}{X}.\]
The unit $1_X$ defines the identity morphism of $X$, which is also denoted by $1_X.$
The composition of homogeneous morphisms
$g \in \Hom{{H^\bullet(\A)}}{X}{Y}$ and $f \in \Hom{{H^\bullet(\A)}}{Y}{Z}$ is defined by 
\begin{align}\label{ring}
f \circ g :=(-1)^{|f||g|+|f|}[\enm_2](f, g).
\end{align}
The product $f \cdot g:=f \circ g$ and the unit $1_X$
gives a ring structure on $H^\bullet \Hom{\A}{X}{X}$ 
\begin{remark}
The definition of the category $H^\bullet(\A)$ is different from \cite{shefor}.
\end{remark}

\subsection{Hochschild cohomology}
We recall the definitions of Hochschild cohomology and its ring structure. 
Let $\A$ be a unital $\ainf$ category.
We define the Hochschild cochains of degree $k\in\Z/2\Z$ and length $s$ by
\[CC^k(\A)^s:=\prod_{X_0, X_1, \dots, X_s} 
   \mathrm{Hom}_{\ke}^{k-s}\big( \Hom{\A}{X_0}{X_ 1}\otimes \cdots \otimes \Hom{\A}{X_{s-1}}{X_s}, 
   \ \Hom{\A}{X_0}{X_s} \big), \]
here $\mathrm{Hom}_{\ke}^{k-s}$ denotes the space of $\ke$-linear morphisms of degree $k-s.$ 
Set \[CC^\bullet(\A):=\prod_{s \ge 0}CC^\bullet(\A)^s.\]
For $\varphi \in CC^\bullet(\A)$, the length $s$ part of $\varphi$ is denoted by $\varphi_s$ and
$\Hom{\A}{X}{X}$ part of $\varphi_0$ is denoted by $\varphi_X$.
The Hochschild differential $M^1$ is defined as follows;
\begin{align}
   M^1(\varphi)(x_1, \dots, x_s):=
     &\sum (-1)^{|\varphi|'(|x_1|'+\cdots +|x_i|')}
      \enm_{s-j+1}(x_1,\dots, \varphi_j(x_{i+1}, \dots ),x_{i+j+1},\dots ,x_s) \\
     +&\sum (-1)^{|\varphi|+|x_1|'+\cdots +|x_i|'} \varphi_{s-j+1}
        (x_1,\dots, \enm_j(x_{i+1}, \dots ),x_{i+j+1},\dots ,x_s) \notag
\end{align}
The complex $\big( CC^\bullet(\A), M^1 \big)$ is called the Hochschild cochain complex and its cohomology is called the Hochscild cohomology.
The Hochschild cohomology of $\A$ is denoted by $HH^\bullet(\A)$.

For $\varphi, \psi \in CC^\bullet(\A),$ the product $\varphi \cup \psi$ is defined as follows;  
\begin{align}\label{prod}
\varphi \cup \psi&:=(-1)^{|\varphi||\psi|+|\varphi|} M^2 (\varphi, \psi), \\ 
 M^2(\varphi, \psi)(x_1,\dots, x_s)&:=
   \sum(-1)^\#
   \enm_*(x_1, \dots, \varphi_*(x_{i+1},\dots),\dots, \psi_*(x_{j+1},\dots),\dots, x_s),
\end{align}
here $\#=|\varphi|'(|x_1|'+\cdots +|x_i|')+|\psi|'(|x_1|'+\cdots +|x_j|').$
We define $\uniho \in CC^0(\A)^0$ by
\[(\uniho)_X:= 1_X.\]
These data makes $HH^\bullet(\A)$ into a $\Z/2\Z$-graded ring with the unit $\uniho$.

The graded center $Z^\bullet(\A)$ is defined by the set of length $0$ cocycles of $CC^\bullet(\A).$

Let $\varphi$ be a cocycle of $CC^\bullet(\A)$.
Then $M^1(\varphi)_X=0$ implies $\enm_1(\varphi_X)=0$.
$[\varphi_X]$ denotes the cohomology class of $\varphi_X.$
We easily see that $[\varphi_X]$ only depends on the cohomology class $[\varphi].$  
Moreover, $M^1(\varphi)_1=0$ implies
\begin{align}\label{commute}
[\varphi_X] \cdot [x] = (-1)^{|\varphi|\cdot |x|} [x] \cdot [\varphi_Y]
\end{align}
for $[x] \in H^\bullet \Hom{\A}{X}{Y}.$

Let $\B \subset \A,$ i.e., $\ob\B \subset \ob\A$ and $\B$ is equipped with the natural unital $\ainf$ structure induced from $\A.$
By restriction,  we have a natural morphism $r_\B : CC^\bullet(\A) \to CC^\bullet(\B)$,  which induces a morphism on cohomology \[[r_\B]: HH^\bullet(\A) \to HH^\bullet(\B).\]
Similarly, for $X \in \A,$ we have a morphism $r_X : CC^\bullet(\A) \to \Hom{\A}{X}{X},$ 
which induces a morphism on cohomology \[ [r_X] : HH^\bullet(\A)  \to H^\bullet \Hom{\A}{X}{X}.\]
We note that $[r_\B]$ and $[r_X]$ are ring homomorphisms.

The following lemma is used in \S \ref{decomp}.
\begin{lemma}\label{2.2}
Let $[\varphi] \in HH^\bullet(\A)$ be an idempotent such that $[\varphi_X]=0$ for all $X$. 
Then, for each $N \in \Z_{\ge 0}$, there exists $\varphi^N \in CC^\bullet(\A) $ such that
$M^1(\varphi^N)=0, \ [\varphi^N]=[\varphi]$, and $\varphi^N_s=0$ for $s \le N$. 
\end{lemma}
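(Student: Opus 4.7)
My plan is to prove the lemma by induction on $N$ via the length filtration
\[F^p CC^\bullet(\A) := \prod_{s \geq p} CC^\bullet(\A)^s\]
on Hochschild cochains. The argument rests on three elementary structural facts: (i) the differential $M^1$ preserves $F^\bullet$ (immediate from its defining formula since $\A$ is uncurved); (ii) the cochain-level cup product is sub-multiplicative for this filtration, $F^p \cup F^q \subseteq F^{p+q}$ — in a typical term $\enm_k(\dots,\varphi_a(\dots),\dots,\psi_b(\dots),\dots)$ of $M^2(\varphi,\psi)$ with $\varphi \in F^p$ and $\psi \in F^q$, one has $a \geq p$, $b \geq q$, and $k \geq 2$, so the number of outer $x_i$-inputs equals $a+b+(k-2) \geq p+q$; and (iii) $M^1$ is a strict graded derivation of $M^2$ on cochains, so $M^1$-cocycles cup to $M^1$-cocycles, and $\varphi \cup \psi$ represents $[\varphi]\cdot[\psi]$ in $HH^\bullet(\A)$.

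For the base case $N=0$, the hypothesis $[\varphi_X]=0$ lets me choose, for each $X \in \ob \A$, an element $\eta_X \in \Hom{\A}{X}{X}$ with $\enm_1(\eta_X) = \varphi_X$; assembling these into a length-$0$ Hochschild cochain $\eta$ gives $M^1(\eta)_0 = \enm_1(\eta_X) = \varphi_X$, so $\varphi^0 := \varphi - M^1(\eta)$ is closed, cohomologous to $\varphi$, and lies in $F^1$.

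For the inductive step, given $\varphi^{N-1} \in F^N$ closed with $[\varphi^{N-1}] = [\varphi]$, I set $\varphi^N := \varphi^{N-1} \cup \varphi^{N-1}$. By (iii) this is closed; by idempotence of $[\varphi]$ we have $[\varphi^N] = [\varphi]^2 = [\varphi]$; and by (ii), $\varphi^N \in F^{2N}\subseteq F^{N+1}$ for every $N \geq 1$, so $\varphi^N_s = 0$ for all $s \leq N$. The filtration in fact doubles at each step, which is far more than needed.

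The only step that deserves real care is (iii), the strict Leibniz rule for $(M^1, M^2)$ on the Hochschild cochain complex of an $\ainf$ category. This is a standard and purely formal consequence of the $\ainf$-relations, and is implicit in the well-definedness of the ring structure on $HH^\bullet(\A)$ given by (\ref{prod}); but it is precisely this property that lets us promote the identity $[\varphi]^2 = [\varphi]$ in cohomology to an honest closed cochain-level representative sitting in ever deeper layers of the filtration. The genuine content of the lemma is therefore the observation that idempotence together with the sub-multiplicativity of $\cup$ forces $[\varphi] \in \bigcap_N F^N HH^\bullet(\A)$.
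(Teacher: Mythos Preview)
Your proof is correct and essentially identical to the paper's: both kill the length-$0$ part by hand using $[\varphi_X]=0$, and then iterate $\varphi \mapsto \varphi \cup \varphi$ to push the representative into deeper layers of the length filtration (the paper phrases the inductive step as $\varphi^{N+1}:=\varphi^N - M^1(\psi)=\varphi^N\cup\varphi^N$, which is exactly your recursion). Your explicit isolation of the sub-multiplicativity $F^p\cup F^q\subset F^{p+q}$ and the strict Leibniz rule for $(M^1,M^2)$ makes the mechanism a bit more transparent, and your doubling bound $F^{2N}$ is sharper than the paper's stated $s\le N+1$, but the argument is the same.
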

\begin{proof}
By assumption, there exists $\psi_X$ such that $\varphi_X=\enm_1 (\psi_X)$ for each $X$. 
Set 
\[\psi(x_1, \dots, x_s)=
  \begin{cases}
    0 &1 \le s \\
    \prod \psi_X & s=0
\end{cases},\]
then $(\varphi-M^1 \psi)_X=0$.
Hence we can assume $\varphi_X=0$, which implies the $N=0$ case of the lemma.
Assume that the lemma holds for $N$.
Then $\varphi^N=\varphi^N \cup \varphi^N+M^1(\psi)$ for some $\psi$.
By the definition of $\cup$, we see that $(\varphi^N \cup \varphi^N)(x_1, \dots, x_s)=0$ for $s \le N+1$.
Hence $\varphi^N- M^1(\psi)$ satisfies the conditions of $\varphi^{N+1}$, which implies the lemma by induction.
\end{proof}

\subsection{Hochschild homology}
Let $\A$ be a unital $\ainf$ category.
We define the Hochschild chains 
as follows:
\[CC_\bullet(\A):=\bigoplus_{X_0, X_1, \dots X_s}\Hom{\A}{X_0}{X_1} \otimes \Hom{\A}{X_1}{X_2} \otimes 
\cdots \otimes \Hom{\A}{X_{s-1}}{X_s} \otimes \Hom{\A}{X_s}{X_0}\]
An element
\[x_0 \otimes x_1 \otimes \cdots \otimes x_s \in \Hom{\A}{X_0}{X_1} \otimes \Hom{\A}{X_1}{X_2} \otimes \cdots \otimes \Hom{\A}{X_{s-1}}{X_s} \otimes \Hom{\A}{X_s}{X_0}\]
 is also denoted by
$x_0[x_1|x_2|\cdots|x_s]$ or $\X$ if it is considered as an element of $CC_\bullet(\A)$.
The degree of $x_0[x_1|x_2|\cdots|x_s]$ is defined by $|x_0|+|x_1|'+\cdots +|x_s|'$.
Set ${|\X|'}_i^j:=|x_i|'+|x_{i+1}|'+\cdots +|x_j|'$ for $i \le j$.
The differential $b$ is defined by
\begin{align}
b(x_0[x_1|\cdots|x_s]):=&\sum (-1)^{{|\X|'}_0^i}x_0[\cdots|\enm_*(x_{i+1},\dots)|\cdots|x_s]\\
   +&\sum (-1)^{{|\X|'}_0^i \cdot {|\X|'}_{i+1}^s}\enm_*(x_{i+1},\dots, x_0, \dots x_j)[x_{j+1}|\cdots|x_i]. \notag
\end{align}
Then $\big(CC_\bullet(\A), b \big)$ is a chain complex and its cohomology $HH_\bullet(\A)$ is called the Hochschild homology. 

We define $b^{1|1}:CC^\bullet(\A) \otimes CC_\bullet(\A) \to CC_\bullet(\A)$ by 
\begin{align}\label{mod}
  b^{1|1}(\varphi; \X):=\sum(-1)^\# 
    \enm_*(x_{i+1}, \dots, \varphi_*(x_{j+1},\dots), \dots, x_0, \dots, x_k)[x_{k+1}|\cdots|x_i],
 \end{align}
here $\#={|\X|'}_0^i \cdot {|\X|'}_{i+1}^s+|\varphi|' \cdot {|\X|'}_{i+1}^j$.
We also define 
\[b^{2|1}:CC^\bullet(\A)^{\otimes2} \otimes CC_\bullet(\A) \to CC_\bullet(\A)[-1]\]
by
\[ b^{2|1}(\varphi, \psi; \X):=\sum(-1)^\# 
    \enm_*(x_{i+1}, \dots, \varphi_*(x_{j+1},\dots), \dots, \psi_*(x_{k+1} \dots), \dots, x_0, \dots, x_l)[x_{l+1}|\cdots|x_i],
     \]
here $\#={|\X|'}_0^i \cdot {|\X|'}_{i+1}^s+|\varphi|' \cdot {|\X|'}_{i+1}^j+|\psi|' \cdot {|\X|'}_{i+1}^k.$
Then the operators $M^1, M^2, b, b^{1|1}, b^{2|1}$ satisfy the following relations (\cite[Theorem 1.9]{getcar}, see also \cite{shefor}):
\begin{align}\label{modrel}
&b(b^{1|1}(\varphi;\X))+(-1)^{|\varphi|'}b^{1|1}(\varphi;b(\X))+b^{1|1}(M^1(\varphi);\X)=0, \\ \label{modrel2}
&b(b^{2|1}(\varphi, \psi; \X))+(-1)^{|\varphi|'}b^{1|1}(\varphi; b^{1|1}(\psi; \X))+
  (-1)^{|\varphi|'+|\psi|'}b^{2|1}(\varphi, \psi; b(\X))\\
+&b^{1|1}(M^2(\varphi, \psi);  \X)+b^{2|1}(M^1(\varphi), \psi; \X)
  +(-1)^{|\varphi|'}b^{2|1}(\varphi, M^1(\psi); \X)=0.\notag
\end{align}
Set $\varphi \cap \X:=(-1)^{|\varphi| \cdot |\X|+|\varphi|}b^{1|1}(\varphi;\X)$.
Then $\cap$ induces an operator of cohomology
\[\cap:HH^\bullet(\A) \otimes HH_\bullet(\A) \to HH_\bullet(\A),\]
which makes $HH_\bullet(\A)$ into a $\Z/2\Z$-graded left $HH^\bullet(\A)$-module.
\begin{remark}
The definitions of the operators $M^1, M^2, b, b^{1|1}, b^{2|1}$ are the same as \cite{shefor}.
But $\cup$ and $\cap$ are different from \cite{shefor}.
\end{remark}
Let $\B \subset \A$ be a subcategory.
 $c_\B$ denotes the inclusion $CC_\bullet(\B) \subset CC_\bullet(\A)$, 
which induces a morphism on cohomology 
\[ [c_\B] : HH_\bullet(\B) \to HH_\bullet(\A).\]
Similarly, for $X \in \A,$ the inclusion $\Hom{\A}{X}{X} \subset CC_\bullet(\A)$ is denoted by $c_X.$
The morphism $c_X$ induces a morphism on cohomology
\[[c_X]: H^\bullet \Hom{\A}{X}{X}  \to HH_\bullet(\A).\]
By the definition of $b^{1|1}$, we easily see that 
\begin{align}\label{dual}
b^{1|1}(\varphi ; c_\B(\X))=c_\B \circ b^{1|1}(r_\B(\varphi) ; \X),
\end{align}
here $\varphi \in CC^\bullet(\A)$ and $\X \in CC_\bullet(\B).$

\subsection{The Mukai pairing}\label{mukai}
In \S \ref{mukai}, we consider a finite unital $\ainf$ category $\A$.
We recall the definition of the Mukai pairing on $HH_\bullet(\A)$ (\cite{shkhir} for dg cases, \cite[Proposition 5.22]{shefor} for $\ainf$ cases).

For $\X=x_0[x_1|\cdots|x_s], \ \X'=x'_0[x'_1|\cdots|x'_t] \in CC_\bullet(\A),$ we define $\muk{\X}{\X'}$ by 
\begin{align*}
\sum \tr(x'' \to (-1)^\#
   \enm_*(x_{i+1}, \dots, x_0, \dots, x_j, \enm_*(x_{j+1},\dots, x_i, x'', x'_{k+1}, \dots, x'_0, \dots, x'_l),
              x'_{l+1}, \dots, x'_k),
\end{align*}
here $\#=1+{|\X|'}_{i+1}^s+{|\X|'}_0^j+|x''| \cdot |\X'|+
             {|\X|'}_0^i \cdot {|\X|'}_{i+1}^s+{|\X'|'}_0^k \cdot {|\X'|'}_{k+1}^t$.
This pairing satisfy \[\muk{b(\X)}{\X'}+(-1)^{|\X|}\muk{\X}{b(\X')}=0\]
and induces a pairing $\muk{}{}$ on $HH_\bullet(\A)$ of degree zero.
For example, if $\X=x_0, \X'=x_0' \in HH_\bullet(\A)$, then we have 
\begin{align}\label{mukfor}
\muk{\X}{\X'}=\str\big{(}x'' \to (-1)^{|x_0|\cdot|x''|+|x_0| \cdot |x_0'|}x_0 \circ x'' \circ x_0'\big{)},
\end{align}
where $\str$ is the supertrace of the $\Z/2\Z$-graded vector space $H^\bullet \Hom{\A}{X_0}{X_0'}$.
By definition, for $\B \subset \A$, we see that $c_\B$ preserves the Mukai pairing.
We recall the following theorem:
\begin{theorem}[\cite{shkhir} for dg cases, {\cite[Proposition 5.24]{shefor}} for $\ainf$ cases]\label{shk}
Let $\A$ be a finite unital $\ainf$ category.
Suppose that $\A$ is smooth $($see, e.g., \cite{konnot} for the definition$)$.
Then $\muk{}{}$ is a non-degenerate pairing on $HH_\bullet(\A)$.
\end{theorem}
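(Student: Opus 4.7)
The plan is to give a categorical interpretation of the Mukai pairing under the smoothness hypothesis and then deduce non-degeneracy from the duality of perfect bimodules. Writing $\A^e := \A \otimes \A^{\mathrm{op}}$, smoothness of $\A$ means that the diagonal bimodule $\A_\Delta$ is a perfect object in the derived category $D(\A^e)$; finiteness of $\A$ ensures the relevant Künneth-type isomorphisms hold. The first step is to establish the standard identification
\[HH_\bullet(\A) \cong \mathrm{Hom}_{D(\A^e)}(\A^!, \A_\Delta),\]
where $\A^! := \mathrm{RHom}_{\A^e}(\A_\Delta, \A^e)$ is the inverse dualizing bimodule. This follows from $HH_\bullet(\A) = \A_\Delta \otimes^L_{\A^e} \A_\Delta$ together with the fact that, under perfectness of $\A_\Delta$, the derived tensor product rewrites as the above $\mathrm{RHom}$.

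Next I would prove a Künneth isomorphism $HH_\bullet(\A \otimes \B) \cong HH_\bullet(\A) \otimes HH_\bullet(\B)$ under properness (which follows from finiteness), and apply it with $\B = \A^{\mathrm{op}}$ together with the equivalence $\mathrm{Perf}(\A^e) \simeq \mathrm{Perf}(\A) \otimes \mathrm{Perf}(\A^{\mathrm{op}})$ to produce a Chern-character-type class $\mathrm{ch}(\A_\Delta) \in HH_\bullet(\A) \otimes HH_\bullet(\A^{\mathrm{op}})$. Under the canonical identification $HH_\bullet(\A^{\mathrm{op}}) \cong HH_\bullet(\A)$, this class becomes a tensor in $HH_\bullet(\A)^{\otimes 2}$, and the central structural claim is that it plays the role of a coevaluation while the Mukai pairing plays the role of an evaluation for the dual pair of perfect $\A^e$-modules $(\A_\Delta, \A^!)$.

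Once these two structural pieces are in place, non-degeneracy of the Mukai pairing is formal: the zigzag identity for dual objects in a rigid symmetric monoidal setting asserts that the composition of $\mathrm{ch}(\A_\Delta)$ (coevaluation) and the Mukai pairing (evaluation) is the identity on $HH_\bullet(\A)$, which forces the pairing to have no kernel; by finite dimensionality from properness, this gives non-degeneracy.

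The main obstacle I anticipate is reconciling the explicit combinatorial formula given in the theorem statement, which is an alternating sum of traces of iterated $\enm_*$-compositions, with the conceptual composition-trace pairing coming from the bimodule picture. This requires constructing an explicit chain-level comparison between the bar model for $\A_\Delta \otimes^L_{\A^e} \A_\Delta$ used to compute $HH_\bullet(\A)$ and the model implicit in the combinatorial formula, and then verifying that the evaluation map $\A^! \otimes^L_{\A^e} \A_\Delta \to \ke$ produces precisely the trace pattern appearing in the definition of the pairing. This step is largely a homological-algebra sign-chase, but it is where the $\ainf$ case genuinely differs from the dg case treated by Shklyarov and where Sheridan's $\ainf$ extension is most substantial.
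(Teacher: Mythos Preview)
The paper does not give its own proof of this theorem: it is stated as a cited result, attributed to Shklyarov \cite{shkhir} in the dg case and to Sheridan \cite[Proposition 5.24]{shefor} in the $\ainf$ case, and is used as a black box thereafter. So there is nothing in the paper to compare your proposal against.

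That said, your outline is essentially the Shklyarov--Sheridan argument. The identification $HH_\bullet(\A) \cong \mathrm{Hom}_{D(\A^e)}(\A^!,\A_\Delta)$ under smoothness, the K\"unneth decomposition, the Chern character of the diagonal as coevaluation, and the zigzag identity yielding non-degeneracy are exactly the ingredients in those references. Your remark that the substantive work in the $\ainf$ setting lies in matching the explicit bar-complex formula for $\muk{}{}$ with the conceptual trace pairing is accurate and is indeed where Sheridan's treatment extends Shklyarov's. If you intend to include a proof rather than a citation, you should either carry out that sign-chase in full or simply point to \cite[Proposition 5.24]{shefor} as the paper does.
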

\begin{remark}
For a smooth finite unital $\ainf$ category, the Hochschild homology is finite dimensional.
Hence $\muk{}{}$ is perfect.
\end{remark}
The next lemma is used in \S \ref{decomp}.
\begin{lemma}\label{2.4}
Let $\A$ be a finite unital $\ainf$ category and $\A_1, \A_2 \subset \A.$
Suppose that \[H^\bullet \Hom{\A}{X_1}{X_2}=0\] for $X_i \in \A_i.$
Let $\X_i \in HH_\bullet(\A_i).$ 
Then $\muk{\X_1}{\X_2}=0.$
Note that $\X_i$ are naturally considered as elements of $HH_\bullet(\A).$
\end{lemma}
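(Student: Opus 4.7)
The strategy is to pass to a minimal model of $\A$, in which the hypothesis $H^\bullet\Hom{\A}{X_1}{X_2}=0$ becomes the literal vanishing $\Hom{\A'}{X_1}{X_2}=0$ of hom spaces, whereupon the Mukai trace formula trivializes.

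First I would invoke the homological perturbation lemma for $\ainf$ categories to produce a minimal unital finite $\ainf$ category $\A'$ with $\ob \A' = \ob \A$, $\Hom{\A'}{X}{Y} = H^\bullet\Hom{\A}{X}{Y}$, and vanishing differential, together with a unital $\ainf$-quasi-isomorphism $F : \A' \to \A$. The subcategories $\A_i \subset \A$ lift to full subcategories $\A'_i \subset \A'$ on the same objects, and the hypothesis becomes $\Hom{\A'}{X_1}{X_2} = 0$ for all $X_1 \in \ob \A'_1$ and $X_2 \in \ob \A'_2$. Next, the induced isomorphism $F_* : HH_\bullet(\A') \xrightarrow{\sim} HH_\bullet(\A)$ intertwines the Mukai pairings (this is the quasi-isomorphism invariance of the Shklyarov pairing, following \cite{shkhir} and \cite[\S 5]{shefor}) and commutes with the inclusions $[c_{\A_i}]$. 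Choose chain representatives $\Y_i \in CC_\bullet(\A'_i)$ with $F_*[\Y_i] = \X_i$; it then suffices to show $\muk{\Y_1}{\Y_2} = 0$ at the chain level in $\A'$.

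For the chain-level vanishing I would write $\Y_1 = y_0[y_1|\cdots|y_s]$ and $\Y_2 = y'_0[y'_1|\cdots|y'_t]$, whose sources and targets $Y_a, Y'_a$ lie in $\ob \A'_1$ and $\ob \A'_2$ respectively. Inspecting the Mukai formula, each summand indexed by positions $(i,j,k,l)$ is the super-trace of an endomorphism of the single space $\Hom{\A'}{Y_{i+1}}{Y'_{k+1}}$ in which the auxiliary variable $y''$ lives, since $Y_{i+1}$ must equal the target of $y_i$ and $Y'_{k+1}$ the source of $y'_{k+1}$. By the transferred vanishing this space is zero, so every summand is a trace on the zero vector space and the entire pairing is zero.

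The main point requiring care is the quasi-isomorphism invariance of the Mukai pairing, i.e.\ a compatible transfer of Hochschild chains under an $\ainf$-morphism; this is standard but takes some bookkeeping to spell out at the chain level. If one prefers to avoid HPT altogether, a direct alternative is: for fixed endpoints $Z_1 \in \ob \A_1$ and $Z_2 \in \ob \A_2$, verify, using $b(\X_1) = 0 = b(\X_2)$, that the sum of those Mukai summands whose trace variable lies in $\Hom{\A}{Z_1}{Z_2}$ is the super-trace of a single chain endomorphism of the finite-dimensional complex $(\Hom{\A}{Z_1}{Z_2}, \enm_1)$, and then apply the fact that such a super-trace vanishes on an acyclic complex.
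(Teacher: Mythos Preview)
Your proposal is correct and follows essentially the same route as the paper: pass to a minimal model so that the hypothesis becomes literal vanishing of hom spaces, and then observe that every summand in the Mukai trace formula is a (super-)trace over a zero vector space. You have simply unpacked what the paper compresses into one sentence, and you rightly flag the quasi-isomorphism invariance of the Mukai pairing as the point that needs care.
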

\begin{proof}
By taking a minimal model of $\A$, we can assume that $\Hom{\A}{X_1}{X_2}=0$ for $X_i \in \A_i.$
Then the lemma easily follows from the definition of $\muk{}{}.$  
\end{proof}


\subsection{Cyclic symmetry}\label{cyclic symmetry}
In \S \ref{cyclic symmetry}, we refer the reader to \cite{fukcyc}.
\begin{definition}
Let $\A$ be a unital finite curved $\ainf$ category and $n$ be a natural number.
$\A$ is called $n$-cyclic if, for each $X, Y$, there exists  a non-degenerate pairing 
\[\cyc{}{} : \Hom{\A}{X}{Y} \otimes \Hom{\A}{Y}{X} \to \ke \]
of degree $n$ which satisfy the following relations:
\begin{align}\label{cyc}
\cyc{\enm_{s-1}(x_1, \dots, x_{s-1})}{x_s}=
   (-1)^{|x_s|'(|x_1|'+\cdots +|x_{s-1}|')} \cyc{\enm_{s-1}(x_s, x_1, \dots, x_{s-2})}{x_{s-1}}.
\end{align}
\end{definition}
Combined with unitality, we have 
\begin{align}\label{symm}
\cyc{x_1}{x_2}&=(-1)^{1+|x_1|'\cdot |x_2|'} \cyc{x_2}{x_1}.
\end{align}
By Equations (\ref{cyc}) and (\ref{symm}), we easily see that
\begin{align}\label{symmm}
\cyc{\enm_{s-1}(x_1, x_2, \dots, x_{s-1})}{x_s}=(-1)^{|x_1|}\cyc{x_1}{\enm_{s-1}(x_2, x_3, \dots, x_s)}
\end{align}
Especially, we have $\cyc{\enm_1(x_1)}{x_2}=(-1)^{|x_1|}\cyc{x_1}{\enm_1(x_2)}.$
Hence we see that $\cyc{}{}$ induces a non-degenerate pairing
\[H^\bullet \Hom{\A}{X}{Y}\otimes H^\bullet \Hom{\A}{Y}{X} \to \ke,\]
which is also denoted by $\cyc{}{}.$
Therefore $H^\bullet(\A)_\infty$ is also an $n$-cyclic $\ainf$ category.
We define a morphism $\int_\A : CC_\bullet(\A) \to \ke$ of degree $-n$ by 
\begin{align}\label{tr}
\int_\A (x_0[x_1|\cdots|x_s]):=
 \begin{cases}
   \cyc{1_{X_0}}{x_0} &\text{if} \ s=0\\
   0 &\text{if}\ s \neq 0
  \end{cases}.
\end{align}
We call $\int_\A$ a trace map.
We note that $\int_\A \enm_2(x_0, x_1)=\cyc{x_0}{x_1}$.
Using this relation, we easily see that
\begin{align}\label{trrel}
\int_\A b(x_0[x_1|\cdots|x_s])=0.
\end{align}
Hence $\int_\A$ induces a morphism on cohomology, which is also denoted by $\int_\A$.
For $\varphi \in CC^\bullet (\A)$ and $\X=x_0[x_1|\cdots|x_s] \in CC_\bullet(\A),$
we define $\cyc{\varphi}{\X}$ by 
\begin{align}
\cyc{\varphi}{\X}:=\int_\A b^{1|1}(\varphi; \X).
\end{align}
We note that if $\varphi, \X \in \Hom{\A}{X}{X}$ then $\cyc{\varphi}{\X}$
is equal to the pairing on $\Hom{\A}{X}{X}$. 
By straight forward calculation, we see that 
\[\cyc{\varphi}{\X}=(-1)^{|x_0|' \cdot {|\X|'}_1^s} \cyc{\varphi(x_1, \dots, x_s)}{x_0}.\]
Hence $\cyc{}{}$ induces an isomorphism $CC^\bullet(\A) \cong CC_\bullet(\A)[n]^\vee.$
Moreover $\cyc{}{}$ satisfies
\[\cyc{M^1(\varphi)}{\X}+(-1)^{|\varphi|'}\cyc{\varphi}{b(\X)}=0\]
by Equations (\ref{modrel}) and (\ref{trrel}).
Hence $\cyc{}{}$ induces a  pairing of degree $n$
\[HH^\bullet(\A)\otimes HH_\bullet(\A) \to \ke,\]
which is also denoted by $\cyc{}{}$.
Moreover $\cyc{}{}$ induces an isomorphism $HH^\bullet(\A) \cong HH_\bullet(\A)[n]^\vee.$
By Equation (\ref{modrel2}), we easily see that 
\begin{align}\label{modcyc}
\cyc{\varphi \cup \psi}{\X}=(-1)^{n\cdot |\psi|}\cyc{\varphi}{\psi \cap \X}.
\end{align}

We define $Z: CC_\bullet(\A) \to CC^\bullet(\A)[n]$ by the formula
\[\muk{\X}{\X'}=\cyc{Z(\X)}{\X'},\]
which induces a morphism $[Z]: HH_\bullet(\A) \to HH^\bullet(\A)[n]$.
Set $Z_X:= r_X \circ Z$ and $[Z_X]:= [r_X] \circ [Z].$
By Theorem \ref{shk}, $[Z]$ is an isomorphism if $\A$ is smooth.

\subsection{Split generation}
Let $\A$ be an $n$-cyclic $\ainf$ category and $\B \subset \A$ be a subcategory.
For each $X, Y \in \A$, take a homogeneous basis $\{e_\alpha^{X,Y} \}$ of $\Hom{\A}{X}{Y}$.
We define $\{e^\alpha_{Y, X} \} \subset \Hom{\A}{Y}{X}$ by the formula
$\cyc{e^\alpha_{Y, X}}{e_\beta^{X,Y}}=\delta^\alpha_\beta$.
For an element $x \in \Hom{\A}{X}{X} \subset CC_\bullet(\A)$, we have 
$\cyc{Z_X(\X)}{x}=\cyc{Z(\X)}{x}=\muk{\X}{x}.$
Using Equation (\ref{symmm}), we see
\begin{align*}
\muk{\X}{&x}\\
                &= \sum \tr\big(x' \to (-1)^\#
                     \enm_*(x_{i+1}, \dots, x_0, \dots, x_j, \enm_*(x_{j+1},\dots, x_i, x', x)\big)\\
                &= \sum (-1)^\#
                     \cyc{e^\alpha_{X, X_{i+1}}}
                      {\enm_*(x_{i+1}, \dots, x_0, \dots, x_j, \enm_*(x_{j+1},\dots, x_i,e_\alpha^{X_{i+1}, X} , x))}\\
                &=\sum (-1)^{\#+|e^\alpha_{X, X_{i+1}}|}
                    \cyc{\enm_*(e^\alpha_{X, X_{i+1}}, x_{i+1}, \dots, x_0, \dots, x_j)}
                    {\enm_*(x_{j+1},\dots, x_i,e_\alpha^{X_{i+1}, X} , x))}\\
                &=\sum (-1)^{\#+|e^\alpha_{X, X_{i+1}}|+\#'}
                    \cyc{\enm_*(\enm_*(e^\alpha_{X, X_{i+1}}, x_{i+1}, \dots, x_0, \dots, x_j),
                     x_{j+1},\dots, x_i, e_\alpha^{X_{i+1}, X})}{x},\\
\end{align*}
here $\#=1+{|\X|'}_{i+1}^s+{|\X|'}_{0}^j+|e_\alpha^{X_{i+1}, X}| \cdot |x|+{|\X|'}_0^i \cdot {|\X|'}_{i+1}^s$ and
       $\#'=|e^\alpha_{X, X_{i+1}}|'+{|\X|'}_{i+1}^s+{|\X|'}_{0}^j.$
Thus we have
\begin{align}
Z_X(\X)=\sum(-1)^\#
\enm_*(\enm_*(e^\alpha_{X, X_{i+1}}, x_{i+1}, \dots, x_0, \dots, x_j), x_{j+1}, \dots, x_i, e_\alpha^{X_{i+1}, X}),
\end{align}
here $\#=|e_\alpha^{X_{i+1}, X}| \cdot |\X|+{|\X|'}_0^i \cdot {|\X|'}_{i+1}^s$. 

For $K \in \A$, we define a chain complex $\Y^r_K \otimes_\B \Y^l_K$ as follows:
\[\Y^r_K \otimes_\B \Y^l_K:=
\!\bigoplus_{\substack{X_0, \dots, X_s \in \B \\ 0 \le s}}\! \Hom{\A}{K}{X_0} \otimes \Hom{\A}{X_0}{X_1}[1] \otimes \cdots \otimes
\Hom{\A}{X_{s-1}}{X_s }[1] \otimes \Hom{\A}{X_s}{K}\]
with the differential 
\begin{align*}
d(m\otimes x_1\otimes \cdots \otimes x_s \otimes n) :=-&\sum \enm_*(m, x_1, \dots, x_i) \otimes x_{i+1} \otimes \cdots \otimes x_s \otimes n \\
+&\sum(-1)^{|m|+|x_1|'+\cdots +|x_i|'}m \otimes \cdots \otimes \enm_*(x_{i+1}, \dots, x_j) \otimes \cdots \otimes n  \notag \\
+&\sum(-1)^{|m|+|x_1|'+\cdots +|x_i|'}m \otimes \cdots \otimes \enm_*(x_{i+1}, \dots, x_s, n) \notag
\end{align*}
We define $m^K: \Y^r_K \otimes_\B \Y^l_K \to \Hom{\A}{K}{K}$ by 
\[m^K(m\otimes x_1 \otimes \cdots \otimes x_s \otimes n):=\enm_{s+2}(m, x_1, \dots, x_s, n).\]
By the $\ainf$ relations, we see $m^K \circ d= \enm_1 \circ m^K$. The morphism between cohomology induced by $m^K$ is denoted by $[m^K]$.
We recall the following result which is due to Abouzaid \cite{abogeo}.

\begin{theorem}\label{sp}
If $1_K \in \im([m^K])$, then $K$ is split generated by $\B$, i.e.,
$K \in D^{\pi}(\B) \subset D^{\pi}(\A)$.
\end{theorem}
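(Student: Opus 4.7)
The plan is to prove that $K$ is a direct summand in $D^\pi(\A)$ of a twisted complex built from objects of $\B$, i.e., of an object of $\mathrm{Tw}(\B)$. I will work via the Yoneda embedding $\A \hookrightarrow \mathrm{mod}\text{-}\A$, which is quasi-fully-faithful; since the splitting closure is preserved by passing to modules, it suffices to exhibit the Yoneda module of $K$ as a retract of a twisted complex of Yoneda modules of objects in $\B$.

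Choose a cycle $\xi = \sum_i m^i \otimes x^i_1 \otimes \cdots \otimes x^i_{s_i} \otimes n^i \in \Y^r_K \otimes_\B \Y^l_K$ representing a preimage of the identity, so that $\enm_1(\eta) = m^K(\xi) - 1_K$ for some $\eta \in \Hom{\A}{K}{K}$. The internal data of $\xi$ assemble into a twisted complex $T$ over $\B$ whose underlying graded object is the direct sum $\bigoplus_{i,j} X^i_j$ (with grading shifts dictated by the degrees of the $x^i_k$) and whose connecting differential is a matrix built from the $x^i_k$ together with all of their higher $\enm_*$-compositions inside $\B$. The cycle condition $d(\xi) = 0$, when unpacked, is exactly the Maurer-Cartan equation for this matrix, so $T$ is well defined as an object of $\mathrm{Tw}(\B)$.

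The outer data, namely the morphisms $m^i : K \to X^i_0$ on the left and $n^i : X^i_{s_i} \to K$ on the right together with the higher $\enm_*$-compositions they participate in, define closed morphisms of twisted complexes $\alpha : K \to T$ and $\beta : T \to K$; again, closedness reduces via the $\ainf$-relations to $d(\xi) = 0$. By the very definition of $m^K$ and of composition in $\mathrm{Tw}(\B)$, the composite $\beta \circ \alpha$ equals $m^K(\xi)$ in $H^\bullet \Hom{\A}{K}{K}$, which by hypothesis represents $[1_K]$. Hence $\alpha$ splits $\beta$ after passing to the cohomology category, exhibiting $K$ as a retract of $T$, so that $K \in D^\pi(\B) \subset D^\pi(\A)$.

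The main technical obstacle is the careful construction of $T$: each length-$s$ summand of $\xi$ contributes not only the edges $x^i_1, \ldots, x^i_{s_i}$ to the connecting matrix but also higher compositions via $\enm_j$ for $j \geq 3$, and one must organize these contributions so that all three statements, namely the Maurer-Cartan equation for the connecting matrix, the closedness of $\alpha$ and $\beta$, and the equality $\beta \circ \alpha = m^K(\xi)$ in cohomology, follow directly from $d(\xi) = 0$ together with the $\ainf$ relations. The remaining bookkeeping with signs and grading shifts follows standard conventions and presents no conceptual difficulty.
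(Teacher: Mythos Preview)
The paper does not give its own proof of this theorem; it records it as Abouzaid's result and cites \cite{abogeo}. So the comparison is really with Abouzaid's argument, and your sketch is aiming in the same direction: pass to modules via Yoneda, exhibit $K$ as a retract of something built from objects of $\B$, and conclude. That high-level strategy is correct.

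The gap is in your construction of $T$. You claim that the internal tensor factors $x^i_k$ of a cycle $\xi=\sum_i m^i\otimes x^i_1\otimes\cdots\otimes x^i_{s_i}\otimes n^i$ assemble into a Maurer--Cartan element on $\bigoplus_{i,j}X^i_j$, and that ``the cycle condition $d(\xi)=0$, when unpacked, is exactly the Maurer--Cartan equation for this matrix.'' That is not true. The equation $d(\xi)=0$ mixes three kinds of terms---those collapsing into the leftmost slot with $m^i$, those collapsing internally among the $x^i_k$, and those collapsing into the rightmost slot with $n^i$---and only their \emph{sum} vanishes, as an element of a tensor product still containing $m^i$ and $n^i$. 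Nothing forces the purely internal part to vanish on its own, so the proposed differential on $T$ need not satisfy Maurer--Cartan, and for the same reason your $\alpha$ and $\beta$ need not be closed. A single term $\xi=m\otimes x\otimes n$ with $m_1(x)\neq 0$ but $d(\xi)=0$ (the $m_1(x)$ contribution being cancelled by $m_1(m)$ or $m_1(n)$ terms) already shows the problem.

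Abouzaid's actual argument avoids this by not trying to read the twisted complex off $\xi$. Instead one forms the right $\A$-module $P:=\Y^r_K\otimes_\B\A$, whose value at $Y$ is the bar complex $\bigoplus\Hom{\A}{K}{X_0}\otimes\Hom{\B}{X_0}{X_1}[1]\otimes\cdots\otimes\Hom{\A}{X_s}{Y}$; its differential is the bar differential, fixed once and for all by the $\ainf$ structure (not by $\xi$), and $P$ is manifestly filtered by twisted complexes of Yoneda modules of objects of $\B$. There is a canonical module map $\mu:P\to\mathcal{Y}_K$ given by $\enm_*$, and $P(K)=\Y^r_K\otimes_\B\Y^l_K$ with $\mu_K=m^K$. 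By Yoneda, the cycle $\xi\in P(K)$ corresponds to a closed module map $f_\xi:\mathcal{Y}_K\to P$, and the hypothesis $[m^K(\xi)]=[1_K]$ says precisely that $[\mu\circ f_\xi]=[\mathrm{id}]$. Thus $\mathcal{Y}_K$ is a retract of $P$; since $\xi$ is a finite sum, $f_\xi$ factors through a finite-length truncation of $P$, which lies in $\mathrm{Tw}(\B)$. So the twisted complex you want is a piece of the bar resolution, with $\xi$ supplying the \emph{section}, not the differential. Once you reorganise your argument along these lines the rest of what you wrote goes through.
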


\begin{remark}
We briefly explain the definition of the $($split-closed\ \!$)$ derived category $D^\pi(\A)$ of a unital $\ainf$ category $\A$.
Let $\mathrm{Ch}(\ke)$ be the dg category of chain complexes over $\ke$.
The $\ainf$ category of one-sided twisted complexes over $\A \otimes_{\ke} \mathrm{Ch}(\ke)$ is denoted by $\mathrm{Tw}(\A)\  (see, $e.g., \cite[Section 8]{aspdir}$)$ and its split closure is denoted by $\Pi \mathrm{Tw} (\A)\  (see, $e.g., \cite[Section 4]{seibook}$)$.
Then $D^\pi(\A)$ is defined by $H^0(\Pi \mathrm{Tw}(\A))$.
For $\B \subset \A,$ we say that $\B$ split generates $\A$ if the inclusion $D^\pi(\B) \subset D^\pi(\A)$ gives an equivalence. 
\end{remark}

We define $CC_\bullet(\Delta): CC_\bullet(\B) \to \Y^r_K \otimes_\B \Y^l_K $ by 
\begin{align*}
CC_\bullet(\Delta)(\X):=\sum (-1)^\# \enm_*(e^\alpha_{K, X_{i+1}}, x_{i+1}, \dots, x_0, \dots, x_j)
                                 \otimes x_{j+1} \otimes \cdots \otimes x_i \otimes e_\alpha^{X_{i+1}, K},
\end{align*}
here $\#=|e_\alpha^{X_{i+1}, K}| \cdot |\X|+{|\X|'}_0^i \cdot {|\X|'}_{i+1}^s$.

\begin{lemma}
$d \circ CC_\bullet(\Delta)+(-1)^n CC_\bullet(\Delta) \circ b=0$.
\end{lemma}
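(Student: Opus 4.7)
The identity asserts that $CC_\bullet(\Delta)$ is a chain map of degree $n$: a quick degree count of its defining formula, using the parity constraint $|e^\alpha|+|e_\alpha| \equiv n \pmod{2}$ forced by the degree-$n$ cyclic pairing, shows that $CC_\bullet(\Delta)(\X)$ lies in shifted degree $|\X|+n$, so that the Koszul sign obtained by commuting $b$ past $CC_\bullet(\Delta)$ is precisely $(-1)^n$. The plan is therefore to expand both sides as explicit sums and match terms directly, using the $\ainf$-relations for the outer composite $\enm_*$ already built into $CC_\bullet(\Delta)$ to kill any remainder.

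First I would decompose the differentials. Write $d = d_L + d_M + d_R$ on $\Y^r_K \otimes_\B \Y^l_K$ according to whether the new $\enm_*$ hits the left module factor (carrying the minus sign built into the definition), a block of consecutive middle factors, or the right module factor; and split $b = b_{\mathrm{in}} + b_{\mathrm{wrap}}$ on $CC_\bullet(\B)$ according to whether the new $\enm_*$ involves $x_0$. Viewing $\X = x_0[x_1|\cdots|x_s]$ as a cyclic word, $CC_\bullet(\Delta)$ sums over pairs of cut positions $(i,j)$ that split the word into an outer arc through $x_0$ (fed into an $\enm_*$ with $e^\alpha_{K,X_{i+1}}$ prepended) and an inner arc $x_{j+1},\ldots,x_i$ (kept as middle tensor factors), with $e_\alpha^{X_{i+1},K}$ trailing on the right. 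Then the $d_M$-contributions whose new $\enm_*$ lies strictly inside the inner arc match bijectively, with the expected sign, the $CC_\bullet(\Delta) \circ b_{\mathrm{in}}$-contributions where $b_{\mathrm{in}}$ acts on what becomes the inner arc. Symmetrically, the $d_L$-contributions whose new $\enm_*$ nests strictly inside the argument list of the outer $\enm_*$ (and does not start with $e^\alpha$) pair with the $CC_\bullet(\Delta) \circ b_{\mathrm{wrap}}$-contributions together with the $CC_\bullet(\Delta) \circ b_{\mathrm{in}}$-contributions where $b_{\mathrm{in}}$ acts on what becomes the outer arc. These pairs cancel.

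What remains on the $d$-side are the boundary terms: the $d_R$-terms whose new $\enm_*$ absorbs $e_\alpha^{X_{i+1},K}$ at its right end, the $d_L$-terms whose new inner $\enm_*$ begins with $e^\alpha_{K,X_{i+1}}$, and the $d_M$-terms whose new $\enm_*$ straddles a cut so that its output would feed into the outer $\enm_*$ or appear as a boundary middle factor. For each reduced cyclic word obtained after contracting the new $\enm_*$, these summands assemble into the complete $\ainf$-relation $\sum \pm \enm_*(\ldots, \enm_*(\ldots),\ldots)=0$ for the outer composite, where the completeness identity $\sum_\alpha \cyc{\,\cdot\,}{e^\alpha}\, e_\alpha = \mathrm{id}$ for the dual basis is invoked to fuse $e^\alpha$ with $e_\alpha$ at a cut where no $x_i$ intervenes. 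Hence the remainder vanishes. The main obstacle is the sign bookkeeping: one must verify that the minus sign on $d_L$, the complicated sign $\#$ built into $CC_\bullet(\Delta)$ (which already carries the degree-$n$ factor $|e_\alpha^{X_{i+1},X}|\cdot|\X|$), and the Koszul signs introduced when cyclically permuting $x_0$ past $e^\alpha$ and $e_\alpha$, all conspire to produce the global factor $(-1)^n$. I would handle this by working throughout in the suspended-bar convention of \cite{shefor}, under which every composition of $\enm_*$'s carries a uniform Koszul sign in the shifted degrees $|\cdot|'$, and by pinning down the overall sign against the simplest cases $s=0$ and $s=1$.
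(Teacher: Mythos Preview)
Your overall plan---decompose $d$ and $b$, match the ``interior'' contributions term by term, and then argue that the boundary terms vanish---follows the paper's approach through the penultimate step, but the final cancellation has a genuine gap. After the bookkeeping, the two residual families are (up to sign) exactly the sums the paper isolates: one of the shape
\[
\enm_*\bigl(\enm_*(e^\alpha,x_{i+1},\ldots,x_j),\,x_{j+1},\ldots,x_0,\ldots\bigr)\otimes\cdots\otimes e_\alpha,
\]
and one of the shape
\[
\enm_*(e^\alpha,\ldots)\otimes\cdots\otimes \enm_*(\ldots,e_\alpha).
\]
These do \emph{not} assemble into a complete $\ainf$ relation for any fixed argument list: the first is a nested composite while the second is a tensor of two separate $\enm_*$'s, and no single $\ainf$ identity contains both shapes. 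The dual-basis completeness identity $\sum_\alpha \cyc{\,\cdot\,}{e^\alpha}\,e_\alpha=\mathrm{id}$ alone cannot match them either, since $e^\alpha$ sits at the \emph{head} of an inner $\enm_*$ in one sum while $e_\alpha$ sits at the \emph{tail} of a different $\enm_*$ in the other.

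What the paper actually uses here is the cyclic symmetry of $\enm_*$ with respect to $\cyc{\ }{\ }$, namely Equation~(\ref{symmm}). From
\[
\cyc{\enm_*(e^\alpha,x_{i+1},\ldots,x_j)}{e_\beta}=(-1)^{|e^\alpha|}\cyc{e^\alpha}{\enm_*(x_{i+1},\ldots,x_j,e_\beta)}
\]
one reads off that the matrix of $\enm_*(e^\bullet,x_{i+1},\ldots,x_j)$ in the $e^\beta$-basis is, up to an explicit sign, the transpose of the matrix of $\enm_*(x_{i+1},\ldots,x_j,e_\bullet)$ in the $e_\beta$-basis. Substituting this into the first residual sum converts it into the second with the opposite sign. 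Your proposal never invokes this cyclic adjointness of $\enm_*$---only the bare dual-basis identity---and without it the argument does not close. (There is also a smaller terminological slip: by the definition of $d$, the $d_L$-term always wraps its new $\enm_*$ \emph{around} the existing left-module factor, so ``$d_L$-contributions whose new $\enm_*$ nests strictly inside the outer $\enm_*$'' is not a meaningful case; similarly $d_M$ acts on consecutive middle factors and cannot ``straddle a cut.'')
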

\begin{proof}
By the $\ainf$ relations, we have 
\begin{align*}
(d \circ CC&_\bullet(\Delta)+(-1)^n CC_\bullet(\Delta) \circ b)(\X)=\\
&\sum (-1)^\# \enm_*(\enm_*(e^\alpha_{K, X_{i+1}}, x_{i+1}, \dots, x_j), x_{j+1}, \dots, x_0, \dots, x_k)
        \otimes x_{k+1} \otimes \cdots \otimes x_i \otimes e_\alpha^{X_{i+1}, K} \\
+&\sum(-1)^{\#'}\enm_*(e^\alpha_{K, X_{i+1}}, x_{i+1}, \dots, x_j) \otimes \cdots \otimes 
 \enm_*(x_{k+1}, \dots, x_i,  e_\alpha^{X_{i+1}, K}),
\end{align*}
here $\#=|e_\alpha^{X_{i+1}, K}| \cdot |\X|+{|\X|'}_0^i \cdot {|\X|'}_{i+1}^s$ and
$\#'=\#+|e^\alpha_{K, X_{i+1}}|'+{|\X|'}_{i+1}^j$.\\
Set \[\enm_*(x_{i+1}, \dots, x_j, e_\alpha^{X_{j+1}, K})=\sum_\beta m_\alpha^\beta e_\beta^{X_{i+1}, K}.\]
Note that $m_\alpha^\beta=0$ if ${|\X|'}_{i+1}^j+|e_\alpha^{X_{j+1},K}|+1 \neq |e_\beta^{X_{i+1}, K}|$.
Then we have 
\begin{align*}
\enm_*(e^\alpha_{K, X_{i+1}}, x_{i+1}, \dots, x_j)&=
\sum_\beta \langle \enm_*(e^\alpha_{K, X_{i+1}}, x_{i+1}, \dots, x_j),
         e_\beta^{X_{j+1}, K}\rangle e^\beta_{K, X_{j+1}}\\
&=\sum_\beta (-1)^{|e^\alpha_{K, X_{i+1}}|} 
   \langle e^\alpha_{K, X_{i+1}}, \enm_*(x_{i+1}, \dots, x_j, e_\beta^{X_{j+1}, K})\rangle e^\beta_{K, X_{j+1}}\\
&=\sum_\beta (-1)^{|e^\beta_{K, X_{i+1}}|+{|\X|'}_{i+1}^j+1} m_\beta^\alpha e^\beta_{K, X_{j+1}}.
\end{align*}
Using this equality, we see that $(d \circ CC_\bullet(\Delta)+(-1)^n CC_\bullet(\Delta) \circ b)(\X)=0$.
\end{proof}
By this lemma, we see that $CC_\bullet (\Delta)$ induces a morphism on cohomology.
By construction, we have $ m^K \circ CC_\bullet(\Delta)=Z_K \circ c_\B$.
Combined with Theorem \ref{sp}, we have the following theorem, which is a version of Abouzaid's generating criterion \cite{abogeo}.
\begin{theorem}\label{spgen}
Let $\A$ be an $n$-cyclic $\ainf$ category and $\B \subset \A$ be a subcategory.
Let $K$ be an object of $\A$. 
Suppose that $1_K \in \im([Z_K] \circ [c_\B])$.
Then $K$ is split generated by $\B$. 
\end{theorem}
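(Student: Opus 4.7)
The plan is essentially to observe that all the pieces have already been assembled in the discussion preceding the theorem, and the conclusion follows by combining them.

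First I would note the factorization identity established just above the statement, namely
\[
m^K \circ CC_\bullet(\Delta) = Z_K \circ c_\B
\]
as chain-level maps. Since $CC_\bullet(\Delta)$ is a chain map (up to the sign $(-1)^n$, which does not affect images), this identity passes to cohomology, giving
\[
[m^K] \circ [CC_\bullet(\Delta)] = [Z_K] \circ [c_\B] : HH_\bullet(\B) \longrightarrow H^\bullet \Hom{\A}{K}{K}.
\]
Consequently $\im\bigl([Z_K] \circ [c_\B]\bigr) \subset \im [m^K]$.

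Next I would invoke the hypothesis: $1_K \in \im([Z_K] \circ [c_\B])$, hence by the inclusion above $1_K \in \im [m^K]$. Applying Theorem \ref{sp} (Abouzaid's criterion) immediately yields that $K$ is split generated by $\B$, i.e., $K \in D^\pi(\B) \subset D^\pi(\A)$.

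There is no real obstacle to overcome here: the entire content of the theorem has been reduced, via the construction of $CC_\bullet(\Delta)$ and the verification of the factorization $m^K \circ CC_\bullet(\Delta) = Z_K \circ c_\B$, to Theorem \ref{sp}. The only thing worth double-checking is that the sign discrepancy $(-1)^n$ in the chain-map relation $d \circ CC_\bullet(\Delta) + (-1)^n CC_\bullet(\Delta) \circ b = 0$ is harmless, which it is because $CC_\bullet(\Delta)$ (up to an overall sign) still descends to a well-defined map on cohomology whose image contains $[Z_K]([c_\B](\X))$ for every cycle $\X$.
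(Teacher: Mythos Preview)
Your proposal is correct and matches the paper's own argument exactly: the paper also deduces the result by combining the factorization $m^K \circ CC_\bullet(\Delta) = Z_K \circ c_\B$ with Theorem~\ref{sp}. There is nothing to add.
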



\section{Preliminaries on Floer cohomology and Quantum cohomology}\label{sec3}
In \S \ref{sec3}, we review Floer cohomology of Lagrangian submanifolds (\cite{fooo1}).
\subsection{Floer cohomology of Lagrangian submanifolds}\label{lagfloer}
Let $(X, \omega)$ be a compact symplectic manifold of dimension $2n$. 
We fix an $\omega$-compatible almost complex structure on $X$.
Let $L$ be a compact oriented Lagrangian submanifold of $(X, \omega)$ with a spin structure 
and $\rho : \pi_1(L) \to \C^\times$ be a group homomorphism which is considered as a local system on $L$.
Let $\mu_L \in H^2(X, L; \Z)$ be the Maslov class. 
We denote by $\Lambda$ the universal Novikov field over $\C$.
Namely,
\[\Lambda= \left\{\sum_{i=0}^\infty a_iT^{\lambda_i}\middle|a_i\in \mathbb{C} \ \lambda_i\in \mathbb{R} \ \lim_{i\to\infty}\lambda_i=\infty\right\}.\]
This is an algebraically closed valuation field.
The valuation is defined by 
\[ \mathrm{val}(\sum_{i=0}^\infty a_iT^{\lambda_i}):=\min\{\lambda_i \mid a_i \neq 0 \}.\]
We denote by $\Lambda_0$ the valuation ring of $\Lambda$ and by $\Lambda_+$ the maximal ideal of $\Lambda_0$.
Let $\mathscr{M}_{s+1}(L; \beta)$ be the moduli space of stable disks bounded by $L$ of homology class $\beta \in H_2(X, L; \Z)$ with $s+1$ counterclockwise ordered boundary marked points.
 
Fukaya-Oh-Ohta-Ono construct $\C$-linear maps
\[m^\can_{s, \beta}:\coh{L}{\C}^{\otimes s} \to \coh{L}{\C} \]
for $s \in \Z_{\ge0}$ and $\beta \in H_2(X, L; \Z)$ by using $\mathscr{M}_{s+1}(L; \beta)$
(see, e.g., \cite{fukcyc}, \cite{fooo1}).
Let $1_L$ be the unit of $\coh{L}{\C}$ and $\pd{}{}{L}$ be the Poincar\'e pairing, i.e., 
\[ \pd{x_1}{x_2}{L}:= \int_L x_1 \cup x_2.\]
Set \[\cyc{x_1}{x_2}:=(-1)^{|x_1| \cdot |x_2|+|x_1|}\pd{x_1}{x_2}{L}.\]
By construction, $m^\can_{s, \beta}$ satisfy the following conditions:
\begin{condition}[{\cite[Corollary 12.1]{fukcyc}}]\label{floer}\
\begin{enumerate}
\item $m^\can_{s, \beta}$ is degree $2-s-\mu_L(\beta)$
          with respect to the cohomological $\Z$ grading of $\coh{L}{\C}$. \\
\item If $m^\can_{s, \beta} \neq 0$,
         then there exist $\beta_1, \dots, \beta_m$ such that 
         $\beta=\beta_1+\cdots +\beta_m$ and $\mathscr{M}_{s+1}(L; \beta_i) \neq \emptyset$. \\
\item $m^\can_{s, 0}(x_1,\dots, x_s)=
            \begin{cases}
                 0 &\text{if} \ s=0, 1 \\
                 (-1)^{|x_1||x_2|+|x_1|}x_1 \cup x_2 & \text{if}\  s=2
             \end{cases}. $\\
\item $\displaystyle{\sum_{\substack{s_1+s_2=s+1 \\ \beta_1+\beta_2=\beta}}}
          (-1)^{|x_1|'+\cdots+|x_i|'} 
          m^\can_{s_1, \beta_1}(x_1, \dots, m^\can_{s_2, \beta_2}(x_{i+1}, \dots), \dots, x_s)=0.$ \\
\item $m^\can_{s, \beta}(\dots, 1_L,\dots)=0$ if $\beta \neq 0$ or $s \ge 3$. \\
\item $\cyc{m^\can_{s, \beta}(x_1,\dots, x_s)}{x_0}=
          (-1)^{|x_0|' \cdot (|x_1|'+\cdots +|x_s|')} \cyc{m^\can_{s, \beta}(x_0, x_1, \dots, x_{s-1})}{x_s}$. 
\end{enumerate}
\end{condition}
We define
$m^\can_{s, \rho} : \coh{L}{\Lambda}^{\otimes s} \to \coh{L}{\Lambda}$ by
\[m^\can_{s,\rho}:=\displaystyle{\sum_{\beta \in H_2(X, L; \Z)}}
	  \rho(\partial \beta) T^{\frac{\omega(\beta)}{2\pi}} m^\can_{s, \beta}.\]
Here $\coh{L}{\Lambda}$ is considered as a $\Z/2\Z$-graded vector space. 
By the above conditions, we have the following theorem:
\begin{theorem}[see, e.g., {\cite[Section 2.2]{foootoricdeg}}]
$\big(\coh{L}{\Lambda}, m^\can_{s, \rho}, 1_L, \cyc{}{}\big)$
defines an $n$-cyclic curved $\ainf$ algebra, i.e., an $n$-cyclic curved $\ainf$ category with one object. 
\end{theorem}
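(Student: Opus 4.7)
The plan is to derive each defining property of an $n$-cyclic curved $\ainf$ algebra on $\coh{L}{\Lambda}$ — well-definedness over $\Lambda$ with correct $\Z/2\Z$-grading, the curved $\ainf$ relations, strict unitality by $1_L$, and non-degenerate cyclic pairing $\cyc{}{}$ — directly from the six items of Condition \ref{floer}, by summing the $\beta$-by-$\beta$ statements with the Novikov weights $\rho(\partial\beta)T^{\omega(\beta)/2\pi}$.

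First I would check that $m^\can_{s,\rho}$ is a well-defined $\Lambda$-multilinear operator of $\Z/2\Z$-degree $2-s$. Gromov compactness for $\mathscr{M}_{s+1}(L;\beta)$ combined with Condition \ref{floer} (2) ensures that, for each energy bound $E$, only finitely many classes $\beta$ with $\omega(\beta)/2\pi \le E$ contribute nonzero $m^\can_{s,\beta}$, and that all contributing classes have $\omega(\beta) \ge 0$; hence the formal sum defining $m^\can_{s,\rho}(x_1, \dots, x_s)$ converges in $\coh{L}{\Lambda}$ in the Novikov sense. Since $L$ is oriented, the Maslov class $\mu_L$ takes values in $2\Z$, so Condition \ref{floer} (1) forces every summand to have the common $\Z/2\Z$-degree $2-s$.

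Next I would verify the structural identities. For the $\ainf$ relations, I apply Condition \ref{floer} (4) to each $\beta$, multiply by $\rho(\partial\beta)T^{\omega(\beta)/2\pi}$, and use the multiplicativity $\rho(\partial(\beta_1+\beta_2)) = \rho(\partial\beta_1)\rho(\partial\beta_2)$ together with additivity of $\omega$ under the splitting $\beta = \beta_1+\beta_2$; the double sum then decouples into a composition of two Novikov-weighted operators, yielding the curved $\ainf$ relations for $m^\can_{s,\rho}$ (the curvature term $m^\can_{0,\rho}$ arising from disks with a single boundary marked point). Unitality is obtained by combining Condition \ref{floer} (5), which kills all contributions with $\beta \neq 0$ or $s \ge 3$, with Condition \ref{floer} (3), which reduces the surviving $\beta = 0$, $s \le 2$ cases to the usual unit property of the cup product adjusted by the fixed sign $(-1)^{|x_1||x_2|+|x_1|}$.

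Finally, non-degeneracy of $\cyc{}{}$ on $\coh{L}{\Lambda}$ follows from Poincar\'e duality: $\pd{}{}{L}$ extends $\Lambda$-linearly to a non-degenerate pairing of degree $n$, and $\cyc{}{}$ differs only by a diagonal sign, so remains non-degenerate of degree $n$. Cyclic symmetry, i.e.\ Equation (\ref{cyc}), follows from Condition \ref{floer} (6) by summing over $\beta$ against the same Novikov weights. The main obstacle, such as it is, is the sign bookkeeping in the cyclicity and unit axioms, since $\cyc{}{}$ is not literally the Poincar\'e pairing but a signed variant; however, the sign convention $\cyc{x_1}{x_2} := (-1)^{|x_1||x_2|+|x_1|}\pd{x_1}{x_2}{L}$ is chosen precisely so that Condition \ref{floer} (6) matches (\ref{cyc}) without further adjustment, reducing the verification to a routine combinatorial check.
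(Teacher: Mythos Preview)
Your proposal is correct and matches the paper's approach: the paper does not spell out a proof but simply asserts the theorem as an immediate consequence of Condition \ref{floer} (with a reference to \cite{foootoricdeg}), and your argument is precisely the routine verification of each axiom of an $n$-cyclic curved $\ainf$ algebra against items (1)--(6), summed with Novikov weights.
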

\subsection{Maurer-Cartan elements}
To construct (non-curved) $\ainf$ algebras, we introduce (weak) Maurer-Cartan elements.
Let $b_+ \in H^\text{odd}(L; \Lambda_+)$ be an odd degree element. 
The pair $(\rho, b_+)$ is denoted by $b$.
Set
\[m^\can_{s, b}(x_1, \dots, x_s):=\!\!
  \sum_{0 \le l_0, l_1, \dots, l_s} \!\! m^\can_{s+l_0+ \cdots +l_s, \rho}(\overbrace{b_+, \dots, b_+}^{l_0}, x_1,
   \overbrace{b_+, \dots, b_+}^{l_1}, x_2, b_+, \dots, b_+, x_s, 
   \overbrace{b_+, \dots, b_+}^{l_s}).\]
\begin{remark}
Since $b_+ \in \coh{L}{\Lambda_+},$ $m^\can_{s, b}$ converge with respect to the energy filtration. 
\end{remark}
We easily see that operators $m^\can_{s, b}$ also satisfy the $\ainf$ relations and the curvature
$m^\can_{0, b}(1)$ is equal to $\displaystyle{\sum_{0 \le s}} m^\can_{s, \rho}(b_+, \dots, b_+)$.
\begin{definition}
$b=(\rho, b_+)$ is called a (weak) Maurer-Cartan element if $b$ satisfies the Maurer-Cartan equation, i.e., 
there exists $W_{L, b} \in \Lambda$ such that 
\begin{align*}
\sum_{0 \le s} m^\can_{s, \rho}(b_+, \dots, b_+)=W_{L,b} \cdot 1_L.
\end{align*}
\end{definition}
For a Maurer-Cartan element $b$, operators $m^\can_{s, b} \ (s \ge 1)$ define an $\ainf$ algebra.
Moreover, using Condition \ref{floer},  we easily see the following: 
\begin{theorem}\label{oneobj}
Let $b=(\rho, b_+)$ be a Maurer-Cartan element. 
Then $\big(\coh{L}{\Lambda}, m^\can_{s, b}, 1_L, \cyc{}{}\big)$
defines an $n$-cyclic $\ainf$ algebra, here we set $m^\can_{0, b}=0$.
\end{theorem}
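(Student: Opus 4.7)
The plan is to verify in turn the three ingredients of an $n$-cyclic $\ainf$ algebra: the $\ainf$-relations, strict unitality, and cyclic symmetry of $\cyc{\cdot}{\cdot}$. The starting observation, already noted in the paragraph preceding the theorem, is that substituting the fixed odd element $b_+$ into the slots of $m^\can_{\cdot,\rho}$ yields operators $m^\can_{s,b}$ for $s\ge 0$ satisfying the \emph{curved} $\ainf$-relations with curvature $m^\can_{0,b}(1)=W_{L,b}\cdot 1_L$ (this is the standard bulk-deformation / bar argument, using only Condition \ref{floer} (4)). The real work is to show that redefining $m^\can_{0,b}:=0$ still gives a genuine $\ainf$-algebra with the required extra structure.

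I would first verify strict unitality of $m^\can_{s,b}$ for $s\ge 1$. Expanding $m^\can_{s,b}(\dots,1_L,\dots)$ as a sum over insertion patterns of $b_+$'s, each term has the form $m^\can_{s+L,\rho}(\dots,1_L,\dots)$ and vanishes by Condition \ref{floer} (5) unless $\beta=0$ and the arity $s+L$ equals $2$. For $s\ge 3$ the arity is already $\ge 3$, so nothing survives. For $s=2$, only the $L=0$ term survives, giving $m^\can_{2,b}(1_L,x)=x$ and $m^\can_{2,b}(x,1_L)=(-1)^{|x|}x$ directly from Condition \ref{floer} (3). For $s=1$ the potentially nonzero contributions $m^\can_{2,0}(b_+,1_L)$ and $m^\can_{2,0}(1_L,b_+)$ cancel because $|b_+|$ is odd, and all higher-arity contributions vanish by Conditions \ref{floer} (3) and (5).

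With unitality in hand, I would eliminate the curvature. The curved $\ainf$-relation in arity $s\ge 1$ differs from the uncurved one exactly by the terms obtained by plugging $W_{L,b}\cdot 1_L$ into an $(s{+}1)$-ary operation, namely $\sum_{i=0}^{s}(-1)^{|x_1|'+\cdots+|x_i|'}\,W_{L,b}\cdot m^\can_{s+1,b}(x_1,\dots,x_i,1_L,x_{i+1},\dots,x_s)$. For $s\ge 2$ every term vanishes by unitality. For $s=1$ only $i=0$ and $i=1$ contribute, and they cancel as $W_{L,b}x_1+(-1)^{|x_1|'}W_{L,b}(-1)^{|x_1|}x_1=0$. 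This yields the uncurved $\ainf$-relations for $s\ge 1$.

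For cyclic symmetry I expand $\cyc{m^\can_{s,b}(x_1,\dots,x_s)}{x_0}$ as a sum over $(l_0,\dots,l_s)\in\Z_{\ge 0}^{s+1}$ of $\cyc{m^\can_{s+L,\rho}(b_+^{l_0},x_1,\dots,x_s,b_+^{l_s})}{x_0}$. Applying Condition \ref{floer} (6) exactly $l_s+1$ times rotates the pair, moving $x_0$ to the front of the input list and leaving $x_s$ in the pairing slot, through intermediate configurations in which the distinguished element is $b_+$. Since $|b_+|'=0$ in $\Z/2\Z$, every intermediate sign factor $(-1)^{|b_+|'\cdot(\cdots)}$ is trivial, and the accumulated sign collapses to $(-1)^{|x_0|'\cdot(|x_1|'+\cdots+|x_s|')}$, independent of $(l_0,\dots,l_s)$. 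The bijection $(l_0,l_1,\dots,l_s)\mapsto(l_s,l_0,l_1,\dots,l_{s-1})$ on $\Z_{\ge 0}^{s+1}$ identifies the resulting sum with the expansion of $\cyc{m^\can_{s,b}(x_0,x_1,\dots,x_{s-1})}{x_s}$, giving the desired cyclic relation. The main obstacle is the sign-tracking in the last two steps: one must exploit the odd parity of $b_+$ both to cancel the $s=1$ curvature terms and to make the iterated applications of Condition \ref{floer} (6) yield a uniform sign that can be pulled out of the sum.
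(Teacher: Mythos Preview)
Your proof is correct and follows exactly the approach the paper indicates: the paper does not give a detailed argument but simply states that the theorem follows from Condition \ref{floer}, and your verification of the $\ainf$-relations, strict unitality, and cyclic symmetry is precisely the routine unpacking of Conditions \ref{floer} (3)--(6) together with the oddness of $b_+$ that the phrase ``we easily see'' is pointing to.
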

The cohomology of $\coh{L}{\Lambda}$ with respect to $m^\can_{1, b}$ is denoted by $HF^\bullet(L, b)$. 
This is naturally considered as a $\Z/2\Z$-graded ring with the unit $1_L$. 

\begin{definition}
Let $L$ be a compact oriented Lagrangian submanifold with a spin structure.
$L$ is called weakly unobstructed if there exists a Maurer-Cartan element.
\end{definition}
We will use the next lemma to state the divisor axiom \big(Assumption \ref{oc} (2)\big).
\begin{lemma}
Let $\eta \in H^2_c(X \setminus L; \C)$ be a compact support cohomology class and $b=(\rho, b_+)$ be a Maurer-Cartan element.
Set 
\begin{align}\label{defdiv}
i^*_{L,b}(\eta):=\sum_{\substack{\beta \in H^2(X, L; \Z) \\ 0 \le s}}\pd{\eta}{\beta}{} \rho(\partial \beta)
 T^{\frac{\omega(\beta)}{2\pi}}
                     m^\can_{s, \beta}(b_+, \dots, b_+)
\end{align}
Then $m^\can_{1, b}\big(i^*_{L,b}(\eta)\big)=0$.
\end{lemma}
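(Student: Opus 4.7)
The plan is to deduce $m^\can_{1, b}\bigl(i^*_{L, b}(\eta)\bigr) = 0$ directly from the $A_\infty$-relations of Condition~\ref{floer}(4) applied to a string of copies of $b_+$, combined with the Maurer-Cartan equation. The key observation is that, since $\eta \in H^2_c(X \setminus L; \C)$ has support disjoint from $L$, the pairing $\langle \eta, \cdot \rangle \colon H_2(X, L; \Z) \to \C$ is a well-defined \emph{additive} homomorphism: representing $\beta$ by a $2$-chain $C$ with $\partial C \subset L$, one sets $\langle \eta, \beta \rangle = \int_C \eta$, and hence $\langle \eta, \beta_1 + \beta_2 \rangle = \langle \eta, \beta_1 \rangle + \langle \eta, \beta_2 \rangle$ for every decomposition $\beta = \beta_1 + \beta_2$ appearing in disk bubbling.

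I would then start from the $A_\infty$-relations applied to the string $(b_+, \dots, b_+)$ of length $s$ and total class $\beta$, multiply each relation by $\langle \eta, \beta \rangle\, \rho(\partial \beta)\, T^{\omega(\beta)/2\pi}$, and sum over $s$ and $\beta$. Since $|b_+|'$ is even, all Koszul signs trivialize. Using the additive splitting of $\langle\eta, \cdot\rangle$ together with the multiplicativity of $\rho(\partial \cdot)$ and $T^{\omega(\cdot)/2\pi}$ under $\beta = \beta_1 + \beta_2$, the resulting identity breaks into two pieces $A + B = 0$: in $A$ the factor $\langle \eta, \beta_1 \rangle$ is attached to the outer operation, and in $B$ the factor $\langle \eta, \beta_2 \rangle$ is attached to the inner one.

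Reorganizing constants, $B$ is exactly $m^\can_{1, b}\bigl(i^*_{L, b}(\eta)\bigr)$, because the inner sum $\sum_{s_2, \beta_2} \langle \eta, \beta_2 \rangle\, \rho(\partial \beta_2)\, T^{\omega(\beta_2)/2\pi}\, m^\can_{s_2, \beta_2}(b_+, \dots, b_+)$ collapses to $i^*_{L, b}(\eta)$ by definition, and the remaining outer sum is precisely $m^\can_{1, b}(-)$. For $A$, the inner sum (now without an $\eta$-factor) is $\sum_{s_2, \beta_2} \rho(\partial \beta_2)\, T^{\omega(\beta_2)/2\pi}\, m^\can_{s_2, \beta_2}(b_+, \dots, b_+) = W_{L, b} \cdot 1_L$ by the Maurer-Cartan equation, so $A$ reduces to a weighted sum of terms of the form $m^\can_{s_1, \beta_1}(\dots, 1_L, \dots)$. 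The unitality axiom (Condition~\ref{floer}(5)) forces these to vanish whenever $\beta_1 \neq 0$ or $s_1 \geq 3$, while the remaining $\beta_1 = 0$ contributions are killed by the factor $\langle \eta, 0 \rangle = 0$. Hence $A = 0$, so $m^\can_{1, b}\bigl(i^*_{L, b}(\eta)\bigr) = B = -A = 0$. The only delicate point is the purely algebraic bookkeeping that tracks how $\beta$, $\rho(\partial \beta)$, $T^{\omega(\beta)/2\pi}$ and $\langle \eta, \beta \rangle$ split along the gluing decompositions of the $A_\infty$-relations; no further geometric or analytic input beyond the cited conditions is needed.
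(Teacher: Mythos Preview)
Your proof is correct and follows essentially the same route as the paper's: apply the $A_\infty$-relations (Condition~\ref{floer}(4)) to a string of $b_+$'s, split the weight $\langle\eta,\beta\rangle$ additively along $\beta=\beta_1+\beta_2$, recognize one half as $m^{\can}_{1,b}\bigl(i^*_{L,b}(\eta)\bigr)$ and kill the other half via the Maurer--Cartan equation and unitality (Condition~\ref{floer}(5)). The paper compresses this into two lines, while you spell out the additivity of $\langle\eta,\cdot\rangle$ and the $\langle\eta,0\rangle=0$ step for the residual $\beta_1=0$ terms; these clarifications are welcome but do not change the argument.
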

\begin{proof}
By Condition \ref{floer} (4), we have
\[ m^\can_{1, b}\big(i^*_{L,b}(\eta)\big)+
    \sum\pd{\eta}{\beta}{} \rho(\partial \beta) T^{\frac{\omega(\beta)}{2\pi}}
            m^\can_{s, \beta}(b_+, \dots,b_+, W_{L,b}\cdot 1_L, b_+, \dots, b_+)=0\]
Combined with Condition \ref{floer} (5), we see  $m^\can_{1, b}\big(i^*_{L,b}(\eta)\big)=0.$
\end{proof}
\subsection{Fukaya categories with one Lagrangian submanifold}
For a compact oriented Lagrangian submanifold with a spin structure $L$ and a group homomorphism $\rho$, we define an $n$-cyclic $\ainf$ category $\fuk{L,\rho}$.
$\ob(\fuk{L, \rho})$ is the set of Maurer-Cartan elements $(\rho, b_+)$ of $L$. 
For objects $b_1=(\rho, b_{1, +})$ and $b_2=(\rho, b_{2, +})$, the morphism space is defined by 
\[\Hom{}{b_1}{b_2}:=
 \begin{cases} \coh{L}{\Lambda} & \text{if} \ W_{L, b_1}=W_{L, b_2} \\
                     0  &\text{otherwise}
  \end{cases}.\]
For Maurer-Cartan elements $b_i=(\rho, b_{i, +})$ and $x_i \in \Hom{}{b_{i-1}}{b_i}\  (1 \le i \le s)$, the $\ainf$ structure maps $m^{L, \rho}_s$ are defined by
\[m^{L, \rho}_s(x_1, \dots, x_s):=\!
  \sum_{0 \le l_0, l_1, \dots, l_s} \! m^\can_{s+l_0+ \cdots +l_s, \rho}
   (\overbrace{b_{0, +}, \dots, b_{0, +}}^{l_0}, x_1,b_{1, +}, \dots, b_{s-1,+}, x_s, 
   \overbrace{b_{s, +}, \dots, b_{s, +}}^{l_s})\]
if $W_{L,b_0}=\cdots =W_{L, b_s}$. 
If otherwise, then $m^{L, \rho}_s(x_1, x_2, \dots, x_s)$ is defined to be $0$.
Then operators $m^{L, \rho}_s \ (1 \le s)$ satisfy the $\ainf$ relations
(see, e.g., \cite[Proposition 1.20]{fukflo2}). 
Moreover we easily see the following:
\begin{theorem}\label{onelag}
$\fuk{L, \rho}$ is an $n$-cyclic $\ainf$ category, here units and pairings are the same as Theorem \ref{oneobj}. 
\end{theorem}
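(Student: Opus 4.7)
The plan is to reduce the statement to three checks: the $\ainf$ relations, strict unitality with $1_L$ at every object, and non-degenerate cyclic symmetric pairing of degree $n$. The $\ainf$ relations themselves are invoked from \cite[Proposition~1.20]{fukflo2}; the point is that at an object $b=(\rho,b_+)$ the naive curvature $\sum_s m^\can_{s,\rho}(b_+,\dots,b_+)=W_{L,b}\cdot 1_L$ is central, and on Hom spaces where the potentials of source and target agree these central contributions cancel in the $\ainf$ relations, producing a genuine uncurved $\ainf$ category. Only unitality and cyclicity must be verified by hand.

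Unitality follows directly from Condition \ref{floer}. I declare $1_b:=1_L\in\Hom{}{b}{b}$ for each Maurer--Cartan element $b$. For $s\ge 3$, every term in the expansion of $m^{L,\rho}_s(x_1,\dots,1_L,\dots,x_{s-1})$ has the form $m^\can_{s',\beta}(\dots,1_L,\dots)$ with $s'\ge s\ge 3$, each vanishing by Condition \ref{floer}~(5). For $s=2$ the same condition forces $\beta=0$ and $s'=2$, so the only surviving terms are $m^\can_{2,0}(1_L,x)=x$ and $m^\can_{2,0}(x,1_L)=(-1)^{|x|}x$ by Condition \ref{floer}~(3), giving the required unit axioms. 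For $s=1$ with $b_0=b_1=b$, only two $s'=2,\beta=0$ terms survive, namely $m^\can_{2,0}(b_+,1_L)=-b_+$ and $m^\can_{2,0}(1_L,b_+)=b_+$, which cancel.

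Cyclic symmetry is inherited from Condition \ref{floer}~(6) via the following mechanism. Each $m^\can_{s,\beta}$ satisfies the cyclic relation, and since the weight $\rho(\partial\beta)T^{\omega(\beta)/2\pi}$ is invariant under cyclic permutation of the boundary marked points, the relation descends to $m^\can_{s,\rho}$. The expansion of $m^{L,\rho}_{s-1}(x_1,\dots,x_{s-1})$ is indexed by tuples $(l_0,\dots,l_{s-1})$ of $b_+$-insertion multiplicities, and a single cyclic rotation $(x_1,\dots,x_s)\mapsto(x_s,x_1,\dots,x_{s-1})$ at the level of $m^{L,\rho}$ corresponds to $l_{s-1}+1$ successive applications of the cyclic relation for $m^\can_{s,\rho}$, combined with a cyclic shift $(l_0,\dots,l_{s-1})\mapsto(l_{s-1},l_0,\dots,l_{s-2})$ under which the total sum is invariant. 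Because $b_+$ has odd degree, each $|b_+|'$ is even, so the intervening $b_+$-factors contribute trivially modulo $2$ to the accumulated sign, which therefore reduces to the required $(-1)^{|x_s|'(|x_1|'+\cdots+|x_{s-1}|')}$. The main obstacle is this sign bookkeeping; the argument parallels the treatment in \cite{fukcyc}.

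Finite-dimensionality and non-degeneracy of $\cyc{}{}$ are immediate: either both Hom spaces vanish (when the potentials differ) and the statement is vacuous, or both equal the finite-dimensional space $\coh{L}{\Lambda}$ on which $\cyc{}{}$ is a sign twist of the Poincar\'e pairing, non-degenerate by base change from the non-degenerate pairing on $\coh{L}{\C}$.
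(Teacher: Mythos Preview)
Your proof is correct and follows exactly the approach the paper intends. The paper gives no explicit proof of this theorem: it cites \cite[Proposition~1.20]{fukflo2} for the $\ainf$ relations and then states the result with ``Moreover we easily see the following,'' leaving unitality and cyclicity as a routine verification from Condition~\ref{floer}. Your write-up is precisely that verification, and the sign bookkeeping you carry out (using $|b_+|'=0$ to trivialize the contribution of the inserted Maurer--Cartan elements under cyclic rotation) is the expected mechanism.
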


\subsection{Quantum cohomology}\label{quantum}
In \S \ref{quantum}, we introduce some notations about quantum cohomology.
See \cite{fukarn} for more details. 

Let \[\gw_l(\alpha; x_1, x_2, \dots, x_l) \in \C\]
be the Gromov-Witten invariant,  
here $\alpha \in H_2(X; \Z)$ and $x_i \   (i=1,2, \dots, l)$ are elements of $\coh{X}{\C}.$

This is defined by using the moduli space of genus zero stable holomorphic maps with $l$ marked points and of homology class $\alpha$.
Set \[\gw_l(x_1, x_2, \dots, x_l):=
\sum_\alpha T^{\frac{\omega(\alpha)}{2 \pi}} \cdot \gw (\alpha; x_1, x_2, \dots, x_l).\]
We extend $\gw_l$ linearly and $\gw_l$ gives a morphism from $\coh{X}{\Lambda}^{\otimes l}$ to $\Lambda.$
We define the quantum product $x_1*x_2$ by the following equation:
\begin{align*}
\pd{x_1*x_2}{x_3}{X}:= \gw_3(x_1, x_2, x_3).
\end{align*}
The quantum product makes $\coh{X}{\Lambda}$ into a $\Z/2\Z$-graded commutative ring with a unit $1_X.$
$\coh{X}{\Lambda}$ equipped with this ring structure is called the quantum cohomology
and denoted by $\qcoh.$


\section{Fukaya categories and open-closed maps}\label{sec4}
\subsection{Assumptions on Fukaya categories and open closed maps}\label{suboc}
In \S \ref{suboc}, we recall some expected properties of Fukaya categories.
The construction of Fukaya categories with these properties is announced by \cite{afooo} 
(see also \cite{ganmir}). 
Let $\LL$ be a finite set of weakly unobstructed compact oriented Lagrangian submanifolds with spin structures.
We assume that $L\pitchfork L'$ for $L \neq L' \in \LL$. 
\begin{assumption}\label{many}
There exists an $n$-cyclic $\ainf$ category $\fuk{\LL}$ which satisfies the following:
\begin{enumerate}
\item $\ob\fuk{\LL} = \{ (L, b) \mid L \in \LL, \ b \ \text{is a Maurer-Cartan element of } L\}.$ \\
\item $\Hom{}{(L_1, b_1)}{(L_2, b_2)}=
    \begin{cases}
    \coh{L_1}{\mathscr{H}om(\rho_1, \rho_2)} \otimes \Lambda
       &\text{if}\  L_1=L_2, \ W_{L_1, b_1}=W_{L_2, b_2}. \\
    \displaystyle{\bigoplus_{p \in L_1 \cap L_2}}\Lambda\langle p \rangle
       &\text{if} \ L_1 \neq L_2,\  W_{L_1, b_1}=W_{L_2, b_2}. \\
     0 &\text{if otherwise},  
\end{cases}$\\
here $\mathscr{H}om(\rho_1, \rho_2)$ is the local system on $L_1$
corresponding to the group homomorphism $\rho_2 \cdot \rho_1^{-1}$. 
Note that $p \in L_1 \cap L_2$ is equipped with a $\Z/2\Z$-grading .\\
\item For $L\in \LL$ and a group homomorphism $\rho$, $\fuk{\LL}$ induces an $n$-cyclic $\ainf$ structure on the set of Maurer-Cartan elements $(\rho, b_+)$ of $L$.
We assume this $n$-cyclic $\ainf$ category is equal to $\fuk{L, \rho}$ introduced in Theorem $\ref{onelag}$. 
\end{enumerate}
\end{assumption}
\begin{remark}
In \cite{shefuk}, Sheridan constructs Fukaya categories for compact monotone symplectic manifolds.
Sheridan also shows that the monotone Fukaya category is naturally equipped with a weak proper Calabi-Yau structure.
But this monotone Fukaya category is not cyclic.
For cyclicity, see also \cite{gancyc}.
\end{remark}
The cohomology $H^\bullet \Hom{}{(L_1,b_1)}{(L_2, b_2)}$
is denoted by $HF^\bullet\big( (L_1, b_1), (L_2, b_2) \big)$.
Moreover, we assume the following: 
\begin{assumption}[see also \cite{fooo1}, \cite{fooospe}, \cite{foootoricmirror}]\label{oc}
There exists a closed open map 
\[\q: \qcoh \to HH^\bullet(\fuk{\LL}).\] 
We define 
\[\p : HH_\bullet(\fuk{\LL}) \to QH^{\bullet+n}(X)\] 
by the equation
\begin{align}\label{defoc}
\pd{x}{\p(\X)}{X}=(-1)^{n \cdot |\X|}\cyc{\q(x)}{\X},
\end{align}
here $x \in \qcoh$ and $\X \in HH_\bullet(\fuk{\LL}).$
They satisfy the following conditions:
\begin{enumerate}
\item $\q$ is a unital ring homomorphism.
\item 
         For $\eta \in H^2_c(X \setminus L; \C)$, we assume
         \[ [r_{(L, b)}] \circ \q(\eta)=i^*_{L, b}(\eta),\]
         here $\eta$ is naturally considered as an element of $\qcoh.$
\item For $\X, \X' \in HH_\bullet(\fuk{\LL}),$
         we assume
         \[\pd{\p(\X)}{\p(\X')}{X}=(-1)^{\frac{n(n+1)}{2}+ ?} \muk{\X}{\X'},\]
         here the sign $?$ only depends on parity of $n$ and $|\X|$.        
\item  Let $[L] \in H^n(X; \Z)$ be the Poincar\'e dual of the homology class of $L$, i.e.,
          $[L]$ is defined by the equation $\int_L x|_L=\int_X [L] \cup x$.
          We consider $1_L \in HF^\bullet(L, b)$ as an element of $HH_\bullet(\fuk{\LL}).$ 
          We assume \[\p(1_L)=[L].\]  
\end{enumerate} 
\end{assumption}
\begin{remark}
There is many results closely related to this assumption.
See, e.g., \cite[Appendix B]{ganaut}, \cite[Propositions 2.1, 2.2, 2.6, Lemmas 2.3, 2.7, 2.14]{shefuk}
for monotone cases $($see also \cite{ritmon}$)$.
For toric cases, see, e.g., \cite[\S 4.7]{foootoricmirror}. 
\end{remark}
By Equation (\ref{modcyc}), we see that $\p$ is a $\qcoh$-module map, i.e.,
         \begin{align}\label{modoc}\p \big( \q(x) \cap \X \big)=x*\p(\X)\end{align}
         for  $x \in \qcoh$ and $\X \in HH_\bullet(\fuk{\LL}).$
To simplify notation, $[Z]$ is also denoted by $Z.$
As a direct consequence of Assumption \ref{oc} (3), we see that 
\begin{align}\label{Z}
\q \circ \p (\X) = (-1)^{\frac{n(n+1)}{2}+?'}Z(\X),
\end{align}
here $?'=?+n\cdot|\X|$.
\begin{lemma}\label{3.10}
Let $\A \subset \fuk{\LL}$ be a subcategory.
Suppose that $H^\bullet(\A)$ is not equivalent to the zero category.
Then $\p \circ [c_\A] \neq 0.$
\end{lemma}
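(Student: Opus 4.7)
The plan is to argue the contrapositive: assuming $\p \circ [c_\A] = 0$, I will deduce that $1_X = 0$ in $H^\bullet\Hom{\A}{X}{X}$ for every object $X \in \A$, so that $H^\bullet(\A)$ is equivalent to the zero category, yielding a contradiction.

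The first step is to transfer the vanishing of $\p \circ [c_\A]$ into a statement about the cyclic pairing on $\A$. Non-degeneracy of the Poincar\'e pairing on $\qcoh$ together with Equation (\ref{defoc}) converts the hypothesis into the vanishing of $\cyc{\q(x)}{[c_\A](\X)}$ in $\fuk{\LL}$ for every $x \in \qcoh$ and $\X \in HH_\bullet(\A)$. Equation (\ref{dual}) then intertwines the underlying operator $b^{1|1}$ with $c_\A$ and $r_\A$, and the identity $\int_{\fuk{\LL}} \circ c_\A = \int_\A$ holds because the cyclic pairing of $\A$ is inherited from $\fuk{\LL}$ by part (3) of Assumption \ref{many}. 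Combining these shows $\cyc{[r_\A]\q(x)}{\X} = 0$ in $\A$ for every $x \in \qcoh$ and $\X \in HH_\bullet(\A)$.

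Now I would specialize to $x = 1_X$, the unit of $\qcoh$. Since $\q$ and $[r_\A]$ are both ring homomorphisms, $[r_\A]\q(1_X) = \uniho \in HH^\bullet(\A)$. Hence the cyclic pairing of $\uniho$ with every element of $HH_\bullet(\A)$ vanishes, and by the non-degeneracy of the induced pairing $HH^\bullet(\A) \otimes HH_\bullet(\A) \to \ke$ established in \S \ref{cyclic symmetry}, we conclude $\uniho = 0$ in $HH^\bullet(\A)$. Applying the ring homomorphism $[r_X]: HH^\bullet(\A) \to H^\bullet\Hom{\A}{X}{X}$ for any $X \in \A$ and using $[r_X](\uniho) = 1_X$ finally forces $1_X = 0$, contradicting the hypothesis that $H^\bullet(\A)$ is not equivalent to the zero category.

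The only non-formal input is the cohomology-level non-degeneracy of the cyclic pairing on $\A$; the rest is a mechanical unravelling of the compatibilities between $\p$, $\q$, $c_\A$, $r_\A$, and the trace maps, together with unitality of $\q$. I expect this non-degeneracy, which rests on the cochain-level identification $CC^\bullet(\A) \cong CC_\bullet(\A)[n]^\vee$ and the finite-dimensionality of the morphism spaces of $\A$, to be the only subtle point in the argument.
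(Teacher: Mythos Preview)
Your argument is correct and follows the same line as the paper's own proof: assume $\p\circ[c_\A]=0$, use the defining relation (\ref{defoc}) and Equation (\ref{dual}) to deduce that $\cyc{[r_\A]\q(1_X)}{\X}=0$ for all $\X\in HH_\bullet(\A)$, invoke the cohomology-level non-degeneracy of the cyclic pairing from \S\ref{cyclic symmetry} to conclude $\uniho=0$ in $HH^\bullet(\A)$, and then restrict to each object to get $1_X=0$. The only minor remark is that the compatibility $\int_{\fuk{\LL}}\circ c_\A=\int_\A$ does not need Assumption~\ref{many}(3); it is immediate from the fact that any subcategory of an $n$-cyclic $\ainf$ category inherits the cyclic pairing on the nose.
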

\begin{proof}
Assume $\p \circ [c_\A]=0.$
By Equation (\ref{dual}) and the definition of $\p$, 
we have \[\pd{1_X}{\p \circ [c_\A] (\X)}{X}=(-1)^{n\cdot |\X|} \cyc{[r_\A] \circ \q(1_X)}{\X}\]
for all $\X \in HH_\bullet(\A).$
From this we have $\uniho =[r_\A] \circ \q(1_X)=0,$
which implies $1_L \in HF^\bullet(L, b)$ are equal to zero for all $(L, b) \in \A.$
Thus $(L, b) \cong 0$, which contradicts the assumption $H^\bullet(\A) \not \cong 0.$
\end{proof}

\subsection{Decomposition of Fukaya categories with respect to idempotents}\label{decomp}
In \S \ref{decomp}, we consider decomposition of Fukaya categories by using open closed maps (see \cite{afooo}, \cite{benloc}, \cite{evagen}, \cite{seiabs}).

For $\A \subset \fuk{\LL}$ and $(L, b) \in \fuk{\LL},$ 
set 
\[\p_\A:= \p \circ [c_\A], \ \q_\A:= [r_\A] \circ \q, 
   \ \p_{L, b}:=\p \circ [c_{(L, b)}], \ \q_{L, b}:=[r_{(L, b)}] \circ \q.\]
We note that $\q_{L, b}$ is also a ring homomorphism and $\p_{L, b}$ is a $\qcoh$-module map. 
Let $e$ be an idempotent of $\qcoh$, i.e., $e^2=e$. 
Note that $e$ is even degree.
Let $\Fuk(X; c)^{\LL}_e$ be the $n$-cyclic $\ainf$ subcategory with  
      \[\ob \big( \Fuk(X; c)^{\LL}_e \big):=\{ (L, b) \mid W_{L, b}=c, \ \q_{L, b}(e)=1_L \}.\]
Set \[\Fuk(X; c)^\LL:= \Fuk(X; c)^\LL_{1_X}, \ \ 
         \Fuk(X)^\LL_e:= \bigcup_{c \in \Lambda} \Fuk(X; c)^\LL_e.\]
Note that $\Fuk(X)^\LL_{1_X}= \fuk{\LL}.$
The restriction of $\p$ to $HH_\bullet(\Fuk(X)_e^\LL)$ is also denoted by $\p.$
Using Theorem \ref{spgen},
we have the following theorem which is due to Abouzaid-Fukaya-Oh-Ohta-Ono \cite {afooo}.
\begin{theorem}[\cite{afooo}]\label{spgenfuk}
Suppose that $\A \subset \Fuk(X)^\LL_e$ 
and $e \in \im (\p_\A).$
Then $\A$ split generates $\Fuk(X)^\LL_e.$
\end{theorem}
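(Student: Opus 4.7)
The plan is to reduce to Theorem \ref{spgen} by verifying its hypothesis object-by-object. Fix $K = (L,b) \in \Fuk(X)^\LL_e$; it suffices to show $1_K$ lies in the image of $[Z_K] \circ [c_\A]$, where $Z_K$ and $c_\A$ are taken with respect to the ambient $n$-cyclic $\ainf$ category $\fuk{\LL}$.

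First I would use the hypothesis $e \in \im(\p_\A)$ to pick $\X \in HH_\bullet(\A)$ with $\p_\A(\X) = e$. Next I would translate between the operator $Z$ and the open-closed maps via Equation (\ref{Z}): composing that identity with $[r_K]$ on the left gives
\begin{equation*}
\q_K \circ \p = (-1)^{\frac{n(n+1)}{2}+?'} Z_K
\end{equation*}
on $HH_\bullet(\fuk{\LL})$, hence also on the image of $[c_\A]$. Applying this to $\X$ yields
\begin{equation*}
Z_K \circ [c_\A](\X) \;=\; \pm\, \q_K \circ \p_\A(\X) \;=\; \pm\, \q_K(e),
\end{equation*}
and since $K \in \Fuk(X)^\LL_e$ by assumption, $\q_K(e) = 1_K$. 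After rescaling $\X$ by the sign, $1_K$ lies in $\im([Z_K] \circ [c_\A])$, as required.

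Now I would invoke Theorem \ref{spgen} with ambient cyclic category $\fuk{\LL}$ and subcategory $\A$: it gives $K \in D^\pi(\A) \subset D^\pi(\fuk{\LL})$. Because $\A \subset \Fuk(X)^\LL_e$ and $K \in \Fuk(X)^\LL_e$, this inclusion takes place inside $D^\pi(\Fuk(X)^\LL_e)$, and since $K$ was an arbitrary object of $\Fuk(X)^\LL_e$, we obtain $D^\pi(\A) = D^\pi(\Fuk(X)^\LL_e)$, i.e.\ $\A$ split generates $\Fuk(X)^\LL_e$.

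There is essentially no hard step: the identification of $Z_K$ with $\q_K \circ \p$ (Equation (\ref{Z})) already packages the deep input coming from Assumption \ref{oc} (3), and the only thing that remains is the tautology $\q_K(e) = 1_K$ which is the very definition of membership in $\Fuk(X)^\LL_e$. The only point worth a sentence of care is that Theorem \ref{spgen} is applied inside $\fuk{\LL}$ rather than inside $\Fuk(X)^\LL_e$, so one should remark that since both $\A$ and $K$ already sit in the smaller category, the resulting split generation statement descends.
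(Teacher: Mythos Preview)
Your argument is correct and follows the same route as the paper: pick $\X$ with $\p_\A(\X)=e$, use Equation~(\ref{Z}) to rewrite $[r_K]\circ\q\circ\p$ as $\pm Z_K$, observe $\q_{L,b}(e)=1_L$ by definition of $\Fuk(X)^\LL_e$, and conclude via Theorem~\ref{spgen}. Your version is simply more explicit about the sign and about the ambient category in which Theorem~\ref{spgen} is being applied.
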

\begin{proof}
By assumption, we have $1_L \in \im ([r_{(L, b)}] \circ \q \circ \p \circ [c_\A])$ for $(L, b) \in \Fuk(X)^\LL_e.$
Combined with Equation (\ref{Z}), we have $1_L \in \im (Z_{(L, b)} \circ [c_\A]).$
The theorem follows from Theorem \ref{spgen}.
\end{proof}
We show some basic properties of $\Fuk(X)^\LL_e.$
\begin{lemma}
Let $e_1, e_2$ be idempotents of $\qcoh$ and $(L_i, b_i) \in \Fuk(X)^\LL_{e_i} \ (i=1, 2).$  
Suppose that $e_1*e_2=0$.
Then $HF^\bullet\big((L_1, b_1), (L_2, b_2)\big)=0.$
\end{lemma}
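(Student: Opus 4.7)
The plan is to exploit the ring homomorphism property of the closed--open map $\q$ together with the centrality of Hochschild cocycles acting on morphism spaces in $H^\bullet(\fuk{\LL})$.

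First I would observe that any nontrivial idempotent of the $\Z/2\Z$-graded-supercommutative ring $\qcoh$ must have even degree, since an odd idempotent $e$ would satisfy $e^2 = -e^2$, forcing $e^2 = 0 \neq e$. Hence $|e_i| = 0$ and $|\q(e_i)| = 0$. Since $\q_{(L_2, b_2)} = [r_{(L_2, b_2)}] \circ \q$ is a composition of ring homomorphisms (Assumption \ref{oc}(1) together with the fact that $[r_X]$ is a ring homomorphism), applying it to the hypothesis $e_1 \ast e_2 = 0$ together with the defining condition $\q_{(L_2, b_2)}(e_2) = 1_{L_2}$ yields
\[ \q_{(L_2, b_2)}(e_1) = \q_{(L_2, b_2)}(e_1) \cdot 1_{L_2} = \q_{(L_2, b_2)}(e_1) \cdot \q_{(L_2, b_2)}(e_2) = \q_{(L_2, b_2)}(e_1 \ast e_2) = 0. \]

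Next I would take a Hochschild cocycle representative $\varphi$ of $\q(e_1) \in HH^\bullet(\fuk{\LL})$ and apply Equation (\ref{commute}) to an arbitrary class $[x] \in HF^\bullet\bigl((L_1, b_1), (L_2, b_2)\bigr)$. Since $|\varphi| = 0$ the sign is trivial, and substituting $[\varphi_{(L_1, b_1)}] = \q_{(L_1, b_1)}(e_1) = 1_{L_1}$ and $[\varphi_{(L_2, b_2)}] = \q_{(L_2, b_2)}(e_1) = 0$ reduces the identity to $[x] = 1_{L_1} \cdot [x] = [x] \cdot 0 = 0$, giving the desired vanishing.

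The main obstacle is that Equation (\ref{commute}) is stated in the paper only for $\varphi$ in the graded center $Z^\bullet(\A)$ of length-$0$ cocycles, whereas $\q(e_1)$ is a priori represented by a Hochschild cocycle of arbitrary length. I would handle this by rederiving the centrality relation directly from $M^1(\varphi)(x) = 0$ applied on a single input $x \in \Hom{\fuk{\LL}}{(L_1,b_1)}{(L_2,b_2)}$: the two bilinear terms $\enm_2(\varphi_{(L_1, b_1)}, x)$ and $\enm_2(x, \varphi_{(L_2, b_2)})$ appear with the same signs as in the graded-center case, while the remaining terms involving $\varphi_1$ are either $\enm_1$-boundaries or annihilate $\enm_1$-cocycles, so the identity descends unchanged to $\enm_1$-cohomology. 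This routine extension of (\ref{commute}) to arbitrary Hochschild cocycles is the only nontrivial step; once it is in place the rest of the argument is formal.
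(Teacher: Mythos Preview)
Your proof is correct and takes essentially the same approach as the paper: both use the centrality relation (\ref{commute}) for the Hochschild cocycle $\q(e_1)$, together with $\q_{L_1,b_1}(e_1) = 1_{L_1}$ and $\q_{L_2,b_2}(e_1) = 0$, to conclude $[x] = 0$. Your observation that (\ref{commute}) is stated only for length-zero cocycles but must hold for arbitrary Hochschild cocycles is well taken---the paper invokes it tacitly in exactly this generality---and your derivation of the extension from the length-one component of $M^1(\varphi) = 0$ is correct.
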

\begin{proof}
For $x \in HF^\bullet\big((L_1, b_1), (L_2, b_2)\big)$, we have 
$x=1_{L_1} \cdot x \cdot 1_{L_2}= \q_{L_1, b_1}(e_1) \cdot x \cdot  \q_{L_2, b_2}(e_2).$
By Equation (\ref{commute}), we see that
\[\q_{L_1, b_1}(e_1) \cdot x \cdot \q_{L_2, b_2}(e_2)=
  x \cdot \q_{L_2, b_2}(e_2) \cdot \q_{L_2, b_2}(e_1)=x \cdot \q_{L_2, b_2}(e_2*e_1)=0.\]
Hence we have $x=0.$
\end{proof}
\begin{lemma}\label{3.11}
$\p \big( HH_\bullet(\Fuk(X)^\LL_e) \big) \subset \qcoh e$
\end{lemma}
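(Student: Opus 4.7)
The plan is to combine the $\qcoh$-module structure on $HH_\bullet(\fuk{\LL})$ furnished by Equation~(\ref{modoc}) with Lemma~\ref{2.2} to reduce the desired inclusion $\p\bigl(HH_\bullet(\Fuk(X)^\LL_e)\bigr) \subset \qcoh e$ to a cap-product identity, and then to prove that identity by a length-truncation argument. Writing $\A := \Fuk(X)^\LL_e$, the membership $\p(\X) \in \qcoh e$ is equivalent to $e * \p(\X) = \p(\X)$, since $e$ is idempotent and $\qcoh$ is commutative. For $\X \in HH_\bullet(\A)$, Equations~(\ref{modoc}) and~(\ref{dual}) give
\[e * \p\bigl([c_\A]\X\bigr) = \p\bigl(\q(e) \cap [c_\A]\X\bigr) = \p\bigl([c_\A](\q_\A(e) \cap \X)\bigr),\]
so I would reduce the claim to showing $\q_\A(e) \cap \X = \X$ in $HH_\bullet(\A)$, equivalently $[\psi] \cap \X = 0$ for $[\psi] := \uniho - \q_\A(e)$.

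Next I would verify the hypotheses of Lemma~\ref{2.2} for $[\psi]$. The class is an idempotent in $HH^\bullet(\A)$: since $\q_\A$ is a ring homomorphism (Assumption~\ref{oc}(1)) and $e^2 = e$, a direct expansion yields $[\psi]^2 = \uniho - 2\q_\A(e) + \q_\A(e) = [\psi]$. Moreover, the defining condition $\q_{L,b}(e) = 1_L$ of objects of $\A$ is precisely the statement $[\psi_X] = 0$ in $H^\bullet\Hom{\A}{X}{X}$ for every $X \in \A$. Hence Lemma~\ref{2.2} produces, for any $N \in \Z_{\ge 0}$, a cocycle representative $\psi^N$ of $[\psi]$ with $\psi^N_s = 0$ for all $s \le N$.

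To kill $[\psi] \cap [\X]$, I would fix a cycle representative $\X$ of a given class in $HH_\bullet(\A)$; as $CC_\bullet(\A)$ is defined as a direct sum over $s$, $\X$ can be taken of bounded length $\le N_0$. Choosing $N \ge N_0 + 2$ and inspecting the formula~(\ref{mod}) for $b^{1|1}$, any potentially nonzero term requires $\psi^N_*$ applied to $* \ge N + 1$ consecutive entries of a chain with only $N_0 + 1$ entries, which is impossible, so $b^{1|1}(\psi^N; \X) = 0$ at the chain level and the cap product vanishes.

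The main obstacle I anticipate is the interplay between this length-truncation and the Novikov completion that is natural for Fukaya-theoretic Hochschild complexes: if $HH_\bullet(\A)$ is in fact computed from a $T$-adically completed complex, a representative may be an infinite sum of arbitrarily long chains. In that case I would supplement the finite-length argument by observing that Condition~\ref{floer}(2) forces $m^\can_{s,\beta}$ at high $s$ to carry disks of large symplectic area, so the valuation of $\psi^N$ tends to $\infty$ as $N \to \infty$, and $[\psi] \cap [\X] = 0$ follows by $T$-adic convergence of $b^{1|1}(\psi^N; \X)$ to zero.
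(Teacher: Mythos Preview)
Your proposal is correct and follows essentially the same route as the paper: reduce to $\q_\A(e)\cap\X=\X$ via the module property~(\ref{modoc}), apply Lemma~\ref{2.2} to the idempotent $\uniho-\q_\A(e)$ (whose length-zero part vanishes by the defining condition on objects of $\Fuk(X)^\LL_e$), and kill the cap product by a length-truncation argument. Your completion concern is unnecessary here, since the paper defines $CC_\bullet(\A)$ as an algebraic direct sum, so every cycle has a bounded-length representative; the paper's only cosmetic difference is that it first picks finitely many $\X_i$ whose images span $\p\bigl(HH_\bullet(\A)\bigr)$ before choosing a single large $N$.
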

\begin{proof}
Since $\p \big( HH_\bullet(\Fuk(X)^\LL_e) \big)$ is finite dimensional,
we can take $\X_1, \dots, \X_k \in HH_\bullet(\Fuk(X)^\LL_e)$
 such that $\p(\X_1), \dots, \p(\X_k)$ span $\p(HH_\bullet(\Fuk(X)^\LL_e).$
By definition, we have $\q_{L, b}(1-e)=0$ for $(L, b) \in \Fuk(X)^\LL_e$.
By Lemma \ref{2.2}, there exists $\varphi^N \in CC^\bullet(\Fuk(X)^\LL_e)$ such that
$M^1 (\varphi^N)=0, [\varphi^N]= \q(1-e)$ and $\varphi^N_s=0$ for $s \le N$.
By taking enough large $N$, we can assume $[\varphi^N] \cap \X_i = 0$ for $i=1, 2, \dots, k$.
Hence we have $\q(e) \cap \X_i= \X_i$. 
Since $\p$ is a module map, we see $e*\p(\X_i)= \p\big(\q(e) \cap \X_i\big)=\p(\X_i)$, 
which implies $\p(\X_i) \in \qcoh e.$
\end{proof}
The next lemma is used in \S \ref{sec5} to construct orthogonal idempotents of $\qcoh$.
\begin{lemma}\label{orth}
Let $\A_1, \A_2 \subset \fuk{\LL}.$
Suppose that $HF^\bullet\big( (L_1, b_1), (L_2, b_2) \big)=0$ for each $(L_i, b_i) \in \A_i.$
Then $\p(\X_1)*\p(\X_2)=0$ for $\X_i \in HH_\bullet(\A_i)$,
here we identify $\X_i$ with $[c_{\A_i}](\X_i)$.
\end{lemma}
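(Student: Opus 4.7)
The plan is to test the quantum product $\p(\X_1)*\p(\X_2)$ against an arbitrary class $y \in \qcoh$ using the Poincar\'e pairing, and reduce everything to the Mukai pairing so that Lemma \ref{2.4} applies. Since $\pd{}{}{X}$ is non-degenerate, it suffices to show $\pd{\p(\X_1)*\p(\X_2)}{y}{X}=0$ for every $y$.

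First I will apply the Frobenius property of $\qcoh$ to move $y$ across the product: up to a fixed sign, $\pd{\p(\X_1)*\p(\X_2)}{y}{X} = \pd{\p(\X_1)}{y*\p(\X_2)}{X}$. Next, using that $\p$ is a $\qcoh$-module map (Equation (\ref{modoc})) together with commutativity of quantum cohomology, I rewrite $y*\p(\X_2) = \p\bigl(\q(y)\cap [c_{\A_2}]\X_2\bigr)$. Applying Equation (\ref{dual}) to push the cap product through the inclusion gives $\q(y)\cap [c_{\A_2}]\X_2 = [c_{\A_2}]\bigl(\q_{\A_2}(y)\cap \X_2\bigr)$, where $\q_{\A_2}(y)\cap \X_2 \in HH_\bullet(\A_2)$.

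Now I can apply Assumption \ref{oc}(3): the Poincar\'e pairing $\pd{\p(\X_1)}{\p([c_{\A_2}](\q_{\A_2}(y)\cap \X_2))}{X}$ equals a sign times $\muk{\X_1}{[c_{\A_2}](\q_{\A_2}(y)\cap \X_2)}$. Because the chain-level inclusion $c_{\A_2}$ preserves the Mukai pairing (as noted in \S\ref{mukai}), this equals a sign times the Mukai pairing of $\X_1 \in HH_\bullet(\A_1)$ with $\q_{\A_2}(y)\cap \X_2 \in HH_\bullet(\A_2)$ computed in $HH_\bullet(\fuk{\LL})$. The hypothesis $HF^\bullet((L_1,b_1),(L_2,b_2))=0$ for all $(L_i,b_i)\in \A_i$ is exactly what Lemma \ref{2.4} needs, so this Mukai pairing vanishes.

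Putting the chain of equalities together yields $\pd{\p(\X_1)*\p(\X_2)}{y}{X}=0$ for every $y$, hence $\p(\X_1)*\p(\X_2)=0$ by non-degeneracy of the Poincar\'e pairing. The only delicate step is the bookkeeping in the second paragraph, where I must verify that the module-map identity (\ref{modoc}) together with the restriction identity (\ref{dual}) genuinely realize $y*\p(\X_2)$ as the image under $\p\circ[c_{\A_2}]$ of an element of $HH_\bullet(\A_2)$; once this is checked, the remaining steps are immediate from the results already established.
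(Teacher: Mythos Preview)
Your argument is correct. Both your proof and the paper's hinge on Lemma~\ref{2.4}, but they are organised differently. The paper rewrites $\p(\X_1)*\p(\X_2)=\pm\,\p\bigl(Z(\X_1)\cap\X_2\bigr)$ via the identity $\q\circ\p=\pm Z$ (Equation~(\ref{Z})), and then shows the stronger intermediate fact that $[r_{\A_2}]\circ Z(\X_1)=0$ in $HH^\bullet(\A_2)$ by testing it against all of $HH_\bullet(\A_2)$ through the perfect cyclic pairing; Lemma~\ref{2.4} supplies the vanishing. You instead dualise on the quantum-cohomology side: testing against an arbitrary $y\in\qcoh$ via the Frobenius property, then using Assumption~\ref{oc}(3) directly to land on a Mukai pairing between classes in $HH_\bullet(\A_1)$ and $HH_\bullet(\A_2)$, which vanishes by Lemma~\ref{2.4}. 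Your route is a touch more elementary in that it never invokes $Z$ or the isomorphism $HH^\bullet(\A)\cong HH_\bullet(\A)[n]^\vee$; the paper's route yields the extra by-product $Z(\X_1)\cap\X_2=0$ in $HH_\bullet(\A_2)$, not merely after applying $\p$.
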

\begin{proof}
We have
 \[\p(\X_1)*\p(\X_2)=\p\big( \q\circ \p(\X_1) \cap \X_2 \big)=
   (-1)^{\frac{n(n+1)}{2}+?'}\p \big( Z(\X_1) \cap \X_2\big).\]
Hence it is sufficient to show that $Z(\X_1) \cap \X_2=0.$
By Equation (\ref{dual}), we see that 
\[Z(\X_1) \cap \X_2 = [c_{\A_2}]\big([r_{\A_2}] \circ Z(\X_1) \cap \X_2\big),\]
which implies 
\[\cyc{Z(\X_1)}{\X_2}=\cyc{[r_{\A_2}]\circ Z(\X_1)}{\X_2}.\]
By Lemma \ref{2.4}, we have $\muk{\X_1}{\X'_2}=0$ for all $\X'_2 \in HH_\bullet(\A_2),$
which implies \[\cyc{[r_{\A_2}] \circ Z(\X_1)}{\X'_2}=\cyc{Z(\X_1)}{\X'_2}=\muk{\X_1}{\X'_2}=0.\]
From this, it follows that $[r_{\A_2}] \circ Z(\X_1)=0.$
Hence we have $Z(\X_1) \cap \X_2=0.$
\end{proof}

\subsection{Automatic split-generation}\label{subaut}
The next statement is a main theorem of this paper.
We note that a similar statement has independently been obtained by Ganatra \cite{ganaut}. 
\begin{theorem}[cf. \cite{ganaut}]\label{aut}
Let $\A \subset \fuk{\LL}$ be a subcategory. 
Suppose that $Z$ of $\A$ is an isomorphism.
Then there exists an idempotent $e \in QH^\bullet(X)$ such that
$\A \subset \Fuk(X)^\LL_e$ and $\A$ split generates $\Fuk(X)^\LL_e.$
Moreover, 
\[\p_\A : HH_\bullet(\A) \to QH^{\bullet+n}(X)e\]
is an isomorphism and $\q_\A$ induces an isomorphism
between $\qcoh e$ and $HH^\bullet(\A)$ by restriction.
\end{theorem}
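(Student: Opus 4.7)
The plan is to exploit the identity $\q \circ \p = \pm Z$ (Equation (\ref{Z})) together with the hypothesis that $Z$ restricted to $\A$ is an isomorphism to produce an idempotent $e \in \qcoh$ with $\im(\p_\A) = \qcoh \cdot e$, and then invoke the split-generation criterion of Theorem \ref{spgenfuk}. First I would establish the functoriality identity $[r_\A] \circ Z_{\fuk{\LL}} \circ [c_\A] = Z_\A$, where I write $Z_\A$ and $Z_{\fuk{\LL}}$ to distinguish the maps $Z$ associated to the two categories. Indeed, $c_\A$ preserves the Mukai pairing, and Equation (\ref{dual}) implies $\cyc{\varphi}{[c_\A]\X}_{\fuk{\LL}} = \cyc{[r_\A]\varphi}{\X}_\A$; non-degeneracy of the cyclic pairing on $HH^\bullet(\A) \otimes HH_\bullet(\A)$ then forces the identity. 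Combined with Equation (\ref{Z}), this shows that $\q_\A \circ \p_\A$ equals $Z_\A$ up to an overall sign, hence is an isomorphism. In particular $\p_\A$ is injective, and setting $V := \im(\p_\A)$, the map $\q_\A$ restricts to an isomorphism $V \to HH^\bullet(\A)$.

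Next I would show that $V$ is an ideal of $\qcoh$ and construct $e$. The identity (\ref{dual}) applied to $b^{1|1}$ gives $\q(x) \cap [c_\A]\X = [c_\A](\q_\A(x) \cap \X)$, so the module formula (\ref{modoc}) yields $x * \p_\A(\X) = \p_\A(\q_\A(x) \cap \X) \in V$. Let $e \in V$ be the unique preimage of $\uniho$ under $\q_\A|_V$. For any $v = \p_\A(\X) \in V$ one computes $e * v = \p_\A(\q_\A(e) \cap \X) = \p_\A(\uniho \cap \X) = v$, so $e$ acts as the identity on $V$; in particular $e^2 = e$. Because $\uniho$ has degree zero, $e$ is even, and graded commutativity of $\qcoh$ gives $V = \qcoh \cdot e$: one inclusion holds because $V$ is an ideal containing $e$, and for the other $v = e * v = v * e \in \qcoh \cdot e$.

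Finally, for each $(L, b) \in \A$, one has $\q_{L, b}(e) = [r_{(L, b)}] \circ \q_\A(e) = [r_{(L, b)}](\uniho) = 1_L$, so $\A \subset \Fuk(X)^\LL_e$; since $e \in \im(\p_\A)$, Theorem \ref{spgenfuk} shows that $\A$ split generates $\Fuk(X)^\LL_e$. The remaining claims then come for free: $\p_\A$ is a $\qcoh$-module isomorphism onto $\qcoh e$ (module compatibility follows from the formula $\p_\A(\q_\A(x)\cap\X) = x * \p_\A(\X)$ above), and the restriction of $\q_\A$ to $\qcoh e$ is a unital ring homomorphism that is bijective onto $HH^\bullet(\A)$, hence a ring isomorphism. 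I expect the main obstacle to be the functoriality identity $[r_\A] \circ Z_{\fuk{\LL}} \circ [c_\A] = Z_\A$ in the first paragraph; once it is in hand, all of the subsequent steps reduce to formal manipulations with the module structure and the unit axioms.
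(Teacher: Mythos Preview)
Your proof is correct and follows the same overall strategy as the paper: use $\q_\A \circ \p_\A = \pm Z_\A$ to produce $e$ as the $\p_\A$-image of the preimage of $\uniho$, verify $e^2=e$, check $\A \subset \Fuk(X)^\LL_e$, and invoke Theorem~\ref{spgenfuk}. Two points are worth comparing.

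First, you are more explicit than the paper about the identity $[r_\A]\circ Z_{\fuk{\LL}}\circ [c_\A] = Z_\A$, which the paper uses without comment when writing $Z = \pm\,\q_\A\circ\p_\A$. Your justification via preservation of the Mukai pairing under $c_\A$ and the adjunction $\cyc{\varphi}{[c_\A]\X} = \cyc{[r_\A]\varphi}{\X}$ (from Equation~(\ref{dual}) together with $\int_{\fuk{\LL}}\circ c_\A = \int_\A$) is the right one.

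Second, and more interestingly, you bypass Lemma~\ref{3.11} entirely. The paper proves $\im(\p_\A) \subset \qcoh e$ by appealing to Lemma~\ref{3.11}, whose proof in turn rests on the somewhat technical Lemma~\ref{2.2} about representing an idempotent Hochschild cocycle with vanishing length-zero part by cocycles of arbitrarily high filtration. Your direct computation $e * \p_\A(\X) = \p_\A(\q_\A(e)\cap \X) = \p_\A(\uniho \cap \X) = \p_\A(\X)$ achieves the same containment in one line, using only that $\q_\A(e)=\uniho$ and the module formula. This is a genuine simplification: it makes the proof of Theorem~\ref{aut} self-contained once Equation~(\ref{Z}) and Theorem~\ref{spgenfuk} are in hand, and renders Lemmas~\ref{2.2} and~\ref{3.11} unnecessary for this particular theorem. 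The paper's route, on the other hand, establishes Lemma~\ref{3.11} as a statement of independent interest about $\Fuk(X)^\LL_e$ for \emph{any} idempotent $e$, not just one arising from a subcategory with invertible $Z$.
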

\begin{proof}
Since $Z$ is an isomorphism, there exists $\X \in HH_\bullet(\A)$ such that
$\q_\A \circ \p_\A(\X)=\uniho$.
Set $e:=\p_\A(\X)$.
Since $\p_\A$ is a module map, we have
\[e*e=\p_\A\big( \q_\A \circ \p_\A(\X) \cap \X \big)=
  \p_\A \big( \uniho \cap \X \big) = \p_\A(\X)=e.\]
By construction, we have $\A \subset \Fuk(X)^\LL_e$ and 
$e \in \im(\p_\A).$
Combined with Theorem \ref{spgenfuk}, it follows that $\A$ split generates $\Fuk(X)^\LL_e.$\\
By Lemma \ref{3.11}, we see that $\p_\A \big( HH_\bullet(\A) \big) \subset QH^{\bullet+n}(X)e.$
Since $Z=(-1)^{\frac{n(n+1)}{2}+?'} \q_\A \circ \p_\A$ is an isomorphism, 
$\p_\A$ is injective. 
Since $e \in \p_\A \big( HH_\bullet(\A) \big)$ and $\p_\A$ is a module map, 
it follows that $\p_\A$ is a surjection to $QH^{\bullet+n}(X)e.$
Hence $\p_\A$ is an isomorphism.
Injectivity of the restriction $\q_\A|_{\qcoh e}$ follows from Equation (\ref{defoc}).
Since the dimensions of $\qcoh e$, $HH_\bullet(\A)$, and $HH^\bullet(\A)$ are the same,
it follows that $\q_\A|_{\qcoh e}$ is an isomorphism.
\end{proof}
\begin{remark}\
\begin{itemize}
\item By Lemma \ref{3.10}, we see that $e \neq 0$ if $H^\bullet(\A) \not \cong 0.$
\item If $\A$ is smooth, then the assumption of Theorem \ref{aut} is satisfied.
\end{itemize}
\end{remark}
\subsection{Eigenvalues of $c_1$}
Let $c_1$ be the first Chern class of the tangent bundle $TX$. 
$\crit$ denotes the set of eigenvalues of the operator $c_1*$. 
For $c \in \crit$, the generalized eigenspace of $c_1*$ with the eigenvalue $c$ is denoted by $\qcoh_c$
and the idempotent corresponding to $\qcoh_c$ is denoted by $e_c$ (i.e., $e_c$ is the $\qcoh_c$ summand of the unit $1_X$).
By definition, $e_c$ satisfies $\qcoh_c=\qcoh e_c.$ 

\begin{proposition}[cf. {\cite[Theorem 6.1]{aurmir}, \cite[Theorem 23.12]{fooospe}}]\label{chern}
Let  $(L, b) \in \fuk{\LL}$. 
If  $b_+ \in H^1(L; \Lambda_+),$
then $\q_{L,b}(c_1)=W_{L, b} \cdot 1_L$.
\end{proposition}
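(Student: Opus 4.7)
The plan is to apply the divisor axiom (Assumption \ref{oc}(2)) to a compactly supported lift of $c_1$, and then extract the result by comparing cohomological $\Z$-degrees on both sides of the Maurer-Cartan equation for $b$.

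First I would lift $c_1$ to a class $\eta \in H^2_c(X \setminus L;\C)$. The obstruction to such a lift lies in $H^2(L;\C)$, but since $L$ is Lagrangian we have an isomorphism $TX|_L \cong TL\otimes_\R\C$ of complex vector bundles; complex conjugation acts on any complexified real bundle by negating $c_1$, so $c_1(TX)|_L$ is $2$-torsion and therefore vanishes over $\C$. The long exact sequence of the pair $(X,L)$ then produces $\eta$, and its natural pairing with a disk class $\beta \in H_2(X,L;\Z)$ coincides with $\pd{c_1}{\beta}{}$ (both pairings factoring through $H^2(X,L;\C)$). Assumption \ref{oc}(2) therefore yields
\[ \q_{L,b}(c_1) \;=\; i^*_{L,b}(\eta) \;=\; \sum_{\beta,\,s} \pd{c_1}{\beta}{}\,\rho(\partial\beta)\,T^{\omega(\beta)/2\pi}\, m^\can_{s,\beta}(b_+,\dots,b_+). \]

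Next I would invoke the Maslov-Chern identity $\mu_L(\beta) = 2\pd{c_1}{\beta}{}$ for disk classes bounded by an oriented Lagrangian (a consequence of the same Lagrangian splitting together with the vanishing of $c_1|_L$, since capping off a disk by a chain in $L$ does not change the Chern number). To compare the resulting expression with $W_{L,b}\cdot 1_L = \sum_{\beta,s}\rho(\partial\beta)T^{\omega(\beta)/2\pi}m^\can_{s,\beta}(b_+,\dots,b_+)$, I would decompose by the cohomological $\Z$-grading on $H^\bullet(L;\C)$: since $b_+$ has degree $1$, Condition \ref{floer}(1) forces $m^\can_{s,\beta}(b_+,\dots,b_+)$ to lie in $H^{2-\mu_L(\beta)}(L;\C)$ for every fixed $\beta$. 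The $H^k$-component of the Maurer-Cartan equation is therefore the sum over $\beta$ with $\mu_L(\beta) = 2-k$ of the corresponding $\Lambda$-valued coefficients, which equals $W_{L,b}$ when $k=0$ and vanishes for all other $k$. The analogous $H^k$-component of $\q_{L,b}(c_1)$ multiplies each $\beta$-summand by the scalar $\mu_L(\beta)/2 = (2-k)/2$, which is $1$ for $k=0$ and is irrelevant on pieces that already vanish; the outcome is exactly $W_{L,b}\cdot 1_L$, as required.

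The main step requiring care is the topological input: the vanishing of $c_1|_L$ in $H^2(L;\C)$ (needed to produce $\eta$), the identification of $\pd{\eta}{\beta}{}$ with $\pd{c_1}{\beta}{}$, and the Maslov-Chern relation in $H^2(X,L;\Z)$. Once these are in place, the remaining argument is a clean $\Z$-graded bookkeeping exercise enforced by the Maurer-Cartan equation.
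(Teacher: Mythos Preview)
Your overall strategy coincides with the paper's: apply the divisor axiom to a compactly supported lift of $c_1$, then use the $\Z$-grading together with $b_+\in H^1$ to isolate the $\mu_L(\beta)=2$ part of the Maurer--Cartan equation. The degree bookkeeping in your last paragraph is exactly the paper's argument.

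There is, however, a gap in how you produce and use the lift. You pick an \emph{arbitrary} lift $\eta\in H^2(X,L;\C)$ of $c_1$ and then assert $\langle\eta,\beta\rangle=\tfrac12\mu_L(\beta)$ via a capping argument (``capping off a disk by a chain in $L$''). But such a capping chain exists only when $\partial\beta=0$ in $H_1(L)$, which is precisely \emph{not} the case for the disk classes of interest (e.g.\ on a Lagrangian torus). In general, two lifts of $c_1$ differ by an element of $\delta\!\left(H^1(L;\C)\right)$, and $\langle\delta(a),\beta\rangle=\langle a,\partial\beta\rangle$ need not vanish; so $\langle\eta,\beta\rangle$ genuinely depends on the choice of $\eta$ and is not pinned down by your argument. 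The identity $\mu_L(\beta)=2\langle c_1,\beta\rangle$ only makes sense once one specifies which relative lift of $c_1$ is meant.

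The paper sidesteps this entirely by observing that the Maslov class $\mu_L$ is \emph{already} an element of $H^2(X,L;\R)\cong H^2_c(X\setminus L;\R)$ mapping to $2c_1$, so one may take $\eta=\mu_L/2$ directly; then $\langle\eta,\beta\rangle=\mu_L(\beta)/2$ is a tautology and no capping is needed. With that one-line fix your argument becomes identical to the paper's.
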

\begin{proof}
We note that the Maslov class $\mu_L \in H^2(X, L;\R) \cong H^2_c(X \setminus L;\R).$
By Assumption \ref{oc} (2), we have 
\[\q_{L, b}(c_1)=i^*_{L, b}(\mu_L/2)=
   \sum_{k \in \Z} \sum_{\mu_L(\beta)=2k} k \cdot \rho(\partial \beta) T^{\frac{\omega(\beta)}{2\pi}}
   m^\can_{s, \beta}(b_+, \dots, b_+).\]
Since $b_+ \in H^1(L; \Lambda_+)$, the degree of $m^\can_{s, \beta}(b_+, \dots, b_+)$ is $2-\mu_L(\beta).$ From this it follows that
\[ \sum_{\mu_L(\beta)=2k} \rho(\partial \beta) T^{\frac{\omega(\beta)}{2\pi}}
   m^\can_{s, \beta}(b_+, \dots, b_+)=
\begin{cases}
0 & \text{if}\ \ k\neq1 \\
W_{L,b} \cdot 1_L &\text{if} \ \ k=1,
\end{cases}\]
which proves the proposition.
\end{proof}
The next lemma gives a relationship between eigenvalues of $c_1*$ and $W_{L, b}$.
\begin{lemma}\label{4.13}
If $\q_{L,b}(c_1)=W_{L, b} \cdot 1_L,$ then
$c_1* \p_{L,b}(x)= W_{L, b} \cdot \p_{L,b}(x)$ for $x \in HF^\bullet(L,b).$
\end{lemma}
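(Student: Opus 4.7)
The plan is to rewrite the left-hand side using the module property of $\p$, localize the resulting cap product to the one-object subcategory $\B = \{(L,b)\} \subset \fuk{\LL}$, and read off the answer from the hypothesis using unitality.

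First, applying Equation (\ref{modoc}) with $y = c_1$ and $\X = [c_{(L,b)}](x) \in HH_\bullet(\fuk{\LL})$, and noting that $|c_1|$ is even so all relevant signs are trivial, I obtain
\[
c_1 * \p_{L,b}(x) = \p\bigl(\q(c_1) \cap [c_{(L,b)}](x)\bigr).
\]
Equation (\ref{dual}), applied to $\B$, then pushes the restriction inside the cap product:
\[
\q(c_1) \cap [c_{(L,b)}](x) = [c_\B]\bigl([r_\B \q(c_1)] \cap x\bigr),
\]
where $x$ is viewed as the length-$0$ Hochschild chain on $\B$ and the right-hand cap is the $HH^\bullet(\B)$-module action on $HH_\bullet(\B)$. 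Composing with $\p$ gives $c_1 * \p_{L,b}(x) = \p\bigl([c_\B]([r_\B \q(c_1)] \cap x)\bigr)$.

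The final step is to identify $[r_\B \q(c_1)] \cap x$ with $W_{L,b}\cdot x$. Since $x$ is length-$0$, the chain-level formula for $b^{1|1}(\varphi; x)$ involves only the length-$0$ part $\varphi_0$ of $\varphi$: higher-arity components of $\varphi$ have no Hochschild chain inputs to consume. Therefore the cap product depends only on the length-$0$ cohomology class $[\q_{L,b}(c_1)] = W_{L,b}\cdot 1_L \in HF^\bullet(L,b)$ given by the hypothesis, and the unitality axiom for $m^\can_{2,b}$ identifies it with the ring product $W_{L,b}\cdot 1_L \cdot x = W_{L,b}\,x$. Combining this with the previous step yields $c_1 * \p_{L,b}(x) = W_{L,b}\cdot \p_{L,b}(x)$.

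The main subtlety lies in the last step: identifying the cap action of a length-$0$ cohomology class with ring multiplication on $HF^\bullet(L,b)$. This is a standard consequence of $1_L$ being the length-$0$ image of the Hochschild unit $\uniho_\B \in HH^\bullet(\B)$, whose cap action is the identity on $HH_\bullet(\B)$; the higher-length contributions vanish automatically because $x$ is a length-$0$ chain.
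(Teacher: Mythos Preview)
Your proof is correct and follows the same route as the paper's. The paper compresses everything into the one-line observation that $\p_{L,b}$ is a $\qcoh$-module map (noted just before Theorem~\ref{spgenfuk}), whose content is precisely your unpacking via Equation~(\ref{modoc}), Equation~(\ref{dual}), and the identification of the cap product on length-$0$ chains with the ring product in $HF^\bullet(L,b)$.
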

\begin{proof}
Since $\p_{L, b}$ is a module map, we have 
\[c_1* \p_{L,b}(x)= \p_{L, b}(\q_{L, b}(c_1) \cap x)=W_{L, b} \cdot \p_{L,b}(x).\]
\end{proof}
Combined with Assumption \ref{oc} (4), we have
\begin{align}\label{eigenvector}
c_1*[L]=W_{L, b} \cdot [L]
\end{align}
if $b_+ \in H^1(L; \Lambda_+).$
The categories $\Fuk(X; c)^\LL$ and $\Fuk(X)^\LL_{e_c}$ are related as follows: 
\begin{lemma}
Suppose that $(L, b) \in \Fuk(X; c)^\LL$ satisfies $\q_{L,b}(c_1)=W_{L, b} \cdot 1_L.$
Then \[(L, b) \in \Fuk(X)^\LL_{e_c}.\]
\end{lemma}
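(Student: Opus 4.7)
The plan is to reduce the problem to showing $\q_{L,b}(e_c) = 1_L$, since the definition of $\Fuk(X)^\LL_{e_c} = \bigcup_{c'} \Fuk(X; c')^\LL_{e_c}$ requires exactly this condition (together with $W_{L,b}$ being some scalar, which is automatic and equals $c$ by hypothesis, placing $(L,b)$ into $\Fuk(X; c)^\LL_{e_c}$).

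First I would invoke the decomposition $\qcoh = \bigoplus_{c' \in \crit} \qcoh_{c'}$ of the finite-dimensional $\Lambda$-algebra $\qcoh$ into generalized eigenspaces of the operator $c_1 *$, which yields the orthogonal idempotent decomposition $1_X = \sum_{c' \in \crit} e_{c'}$. Since $\q_{L,b}$ is a ring homomorphism by Assumption \ref{oc}(1), applying it to this sum gives $1_L = \sum_{c' \in \crit} \q_{L,b}(e_{c'})$, so it suffices to prove that $\q_{L,b}(e_{c'}) = 0$ whenever $c' \neq c$.

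For the key step, I would fix $c' \neq c$ and exploit the fact that $(c_1 - c' \cdot 1_X)*$ acts nilpotently on $\qcoh_{c'} = \qcoh \cdot e_{c'}$. Therefore the operator
\[(c_1 - c \cdot 1_X)* \;=\; (c' - c) \cdot e_{c'} * \;+\; (c_1 - c' \cdot 1_X)*\]
restricted to $\qcoh_{c'}$ is a nonzero scalar plus a nilpotent, hence invertible on $\qcoh_{c'}$. In particular, $e_{c'}$ lies in its image, so there exists $y \in \qcoh_{c'}$ with $(c_1 - c \cdot 1_X) * y = e_{c'}$. Applying $\q_{L,b}$ and using the hypothesis $\q_{L,b}(c_1) = c \cdot 1_L$ together with $\q_{L,b}(1_X) = 1_L$ yields
\[\q_{L,b}(e_{c'}) \;=\; \q_{L,b}(c_1 - c \cdot 1_X) \cdot \q_{L,b}(y) \;=\; 0,\]
which completes the reduction.

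I do not anticipate any genuine obstacle: the argument is purely algebraic and uses only the ring-homomorphism property of $\q_{L,b}$ together with the standard primary decomposition of a finite-dimensional algebra over an algebraically closed field. The only point requiring a brief justification is the finiteness of $\crit$ and the existence of the orthogonal idempotents $e_{c'}$, but both follow at once from $\qcoh$ being a finite-dimensional $\Lambda$-vector space and $\Lambda$ being algebraically closed.
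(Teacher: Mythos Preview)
Your proof is correct and takes essentially the same approach as the paper. The paper streamlines the argument by working with $1_X - e_c$ at once: it observes that $(c_1 - c)*$ is invertible on $\qcoh(1_X - e_c)$, takes $\alpha$ with $(c_1 - c)*\alpha = 1_X - e_c$, and applies $\q_{L,b}$ to conclude $\q_{L,b}(1_X - e_c) = 0$; your version decomposes $1_X - e_c$ into the individual $e_{c'}$ and treats them one at a time, but the underlying idea is identical.
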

\begin{proof}
By the definition of $e_c$, there exists $\alpha \in \qcoh(1_X-e_c)$ such that $(c_1-c)*\alpha=1_X-e_c$.
By assumption, we have $\q_{L,b}(1_X-e_c)=\q_{L, b}(c_1-c) \cdot \q_{L, b}(\alpha)=0$
for all $(L, b) \in \Fuk(X; c)^\LL$.   
Hence we have $\q_{L, b}(e_c)=\q_{L, b}(1_X)=1_L$, which implies the lemma. 
\end{proof}

\subsection{Potential functions}\label{potential}
In \S \ref{potential}, we assume that $H_1(L ;\Z)$ of $L \in \LL$ is torsion free. 
Moreover we assume the following condition (\cite[Condition 6.1]{foootoricdeg}):
\begin{condition}\label{positive}
Let $L$ be a compact oriented Lagrangian submanifold. 
If $\beta \in H_2(X, L; \Z)$ satisfies $ \mathscr{M}_1(L; \beta) \neq \emptyset$ and $\beta \neq 0$,
then $\mu_L(\beta) \ge 2$.
\end{condition}
We note that this condition is satisfied if 
\begin{itemize}
\item $L$ is monotone, i.e., $\mu_L=c \cdot \omega \in H^2(X, L; \Z)$ for some $c \in \R_{>0}.$
\item $L$ is a torus fiber of a compact symplectic toric Fano manifold.
\end{itemize}
Note that if $L$ satisfies Condition \ref{positive},
then $m^\can_{0, \beta}(1) \in \Q \cdot 1_L$ and 
\begin{align}\label{eq4.2}
m^\can_{s, \beta}(b, b, \dots, b)=\frac{\langle \partial \beta, b \rangle^s}{s!} \cdot m^\can_{0, \beta}(1)
\end{align}
for $b \in H^1(L;\C)$ (see \cite[Appendix 1]{foootoricdeg}).
Define $n_\beta \in \Q$ by $n_\beta \cdot 1_L=m^\can_{0, \beta}(1).$ 
For $b_+ \in H^1(L; \Lambda_+)$,
we see that 
\[W_{L, b}= \sum_{\mu_L(\beta)=2}
   n_\beta \cdot T^{\frac{\omega(\beta)}{2\pi}} \cdot \rho(\partial \beta) \cdot
    \exp (\langle \partial \beta, b_+ \rangle)\]
by using Equation (\ref{eq4.2}) (see \cite[Theorem A.2]{foootoricdeg}).
Thus all elements of $\mathrm{Hom}\big(\pi_1(L), \C^*\big) \times H^1(L; \Lambda_+)$ are Maurer-Cartan elements. 
Let $\Lambda[H_1(L; \Z)]$ be the Laurent polynomial ring with the coefficient $\Lambda$ corresponding to the monoid
$H_1(L; \Z)$. 
The monomial corresponding to $a \in H_1(L; \Z)$ is denoted by $y^a.$
The completion of the ring $\Lambda[H_1(L; \Z)]$ with respect to the Gauss norm is denoted by 
$\Lambda \langle \langle H_1(L; \Z) \rangle \rangle$ (see \cite{bosnon}, \cite{foootoricmirror}).
Then $\Lambda \langle \langle H_1(L; \Z) \rangle \rangle$ is considered as the ring of $\Lambda$-valued functions defined on $H^1(L; \Lambda_{=0}):=\mathrm{Hom}\big(H_1(L; \Z), \Lambda_0^*\big).$
\begin{definition} We define a potential function $W_L$ of $L$ by  
\[W_L:=  \sum_{\mu_L(\beta)=2}
   n_\beta \cdot T^{\frac{\omega(\beta)}{2\pi}}\cdot y^{\partial \beta}.\]
By the Gromov compactness, this is an element of $\Lambda \langle \langle H_1(L; \Z) \rangle \rangle.$
\end{definition}

By the isomorphism 
$\mathrm{Hom}\big(\pi_1(L), \C^*\big) \cong \mathrm{Hom}\big(H_1(L; \Z), \C^*\big)$ 
and the natural inclusion $\C^* \subset \Lambda^*_0$,
we consider $\mathrm{Hom}\big(\pi_1(L), \C^*\big)$ as a subset of $H^1(L; \Lambda_{=0}).$
Then  $\mathrm{Hom}\big(\pi_1(L), \C^*\big) \times H^1(L; \Lambda_+)$ is isomorphic to $H^1(L; \Lambda_{=0})$
by sending $(\rho, b_+)$ to $\rho \cdot e^{b_+}.$
Then we see that $W_L(b)=W_{L, b}$ for $b \in H^1(L; \Lambda_{=0}).$

Take a basis $\{e_i\}_{i=1}^m$ of $H_1(L; \Z)$ and set $y_i:=y^{e_i}.$  
Let $\{e^i\} \subset H^1(L; \Z)$ be the dual basis of $\{e_i\}.$
By definition, we see 
\begin{align}
 m^\can_{1, b}(e^i)&=y_i \frac{\partial W_L}{\partial y_i}(b) \cdot 1_{(L, b)}, \\
 m^\can_{2, b}(e^i, e^j)+m^\can_{2, b}(e^j, e^i)&=
 y_iy_j\frac{\partial^2 W_L}{\partial y_i \partial y_j}(b) \cdot 1_{(L, b)}.
\end{align}
for $b \in H^1(L; \Lambda_{=0}).$
The next proposition is not used later, but it may be useful to understand Landau-Ginzburg mirror of Fano manifolds.
\begin{proposition}[cf. {\cite[Remark 3.1.3]{ggi}}]
Suppose that $H_1(X; \Z)=0$ and $c_1/r \in H^2(X; \Z)$ for $\ r \in \Z_{\ge 1}$.
Let $\beta_1, \dots, \beta_m$ be elements of $H_2(X, L; \Z)$ such that
$\{ \partial \beta_i \}$ is a $\Z$-basis of $H_1(L; \Z)$ and
let $\zeta$ be an $r$th root of unity.
Set $k_i:=\mu_L(\beta_i)/2 \in \Z$ and $y_i:=y^{\partial \beta_i}.$
Then we have \[W_L(\zeta^{k_1}y_1, \dots, \zeta^{k_m} y_m)=\zeta W_L(y_1, \dots, y_m).\]    
\end{proposition}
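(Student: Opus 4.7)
The plan is to examine each monomial $n_\beta T^{\omega(\beta)/2\pi} y^{\partial\beta}$ with $\mu_L(\beta)=2$ appearing in $W_L$, and verify that the substitution $y_i \mapsto \zeta^{k_i} y_i$ multiplies it by exactly $\zeta$. The essential ingredient will be a modular constraint on $\sum a_i k_i$ coming from the hypothesis $c_1/r \in H^2(X;\Z)$, combined with the long exact sequence of the pair $(X,L)$.

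First I would write $\partial\beta = \sum_i a_i \,\partial\beta_i$ in $H_1(L;\Z)$, which is possible because $\{\partial\beta_i\}$ is a basis. Then $\beta - \sum_i a_i \beta_i$ lies in the kernel of $\partial : H_2(X,L;\Z) \to H_1(L;\Z)$. Using the hypothesis $H_1(X;\Z)=0$, the long exact sequence of the pair shows this kernel equals the image of $\iota_* : H_2(X;\Z) \to H_2(X,L;\Z)$, so there exists $\alpha \in H_2(X;\Z)$ with $\beta = \sum_i a_i \beta_i + \iota_*\alpha$.

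Next I would compute Maslov indices: $\mu_L$ is additive, $\mu_L(\beta_i) = 2k_i$ by definition, and for a class pushed forward from $H_2(X;\Z)$ the Maslov index equals twice the Chern number, $\mu_L(\iota_*\alpha) = 2\langle c_1, \alpha\rangle$. Hence the condition $\mu_L(\beta)=2$ becomes
\begin{equation*}
\sum_i a_i k_i + \langle c_1, \alpha\rangle = 1.
\end{equation*}
Since $c_1/r$ is an integral class, $\langle c_1, \alpha\rangle \in r\Z$, so $\sum_i a_i k_i \equiv 1 \pmod r$. Under the substitution $y_i \mapsto \zeta^{k_i} y_i$, the monomial $y^{\partial\beta} = \prod_i y_i^{a_i}$ transforms into $\zeta^{\sum_i a_i k_i} \prod_i y_i^{a_i} = \zeta\, y^{\partial\beta}$ because $\zeta^r=1$. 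Since this $\zeta$-factor is independent of the particular $\beta$ and the $T$-weight $\omega(\beta)$ is unaffected, summing over all $\beta$ with $\mu_L(\beta)=2$ yields $W_L(\zeta^{k_1}y_1,\dots,\zeta^{k_m}y_m) = \zeta\, W_L(y_1,\dots,y_m)$.

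The computation itself is entirely algebraic once the decomposition $\beta = \sum a_i \beta_i + \iota_*\alpha$ is in place, so the main obstacle is really just confirming the Maslov identity $\mu_L(\iota_*\alpha) = 2\langle c_1, \alpha\rangle$ and being careful that the $a_i$ and $\alpha$ exist with integer coefficients; both follow from standard topology of Lagrangians together with the assumptions $H_1(X;\Z)=0$ and $\{\partial\beta_i\}$ being a $\Z$-basis.
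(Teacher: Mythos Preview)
Your proof is correct and follows essentially the same route as the paper's: write $\partial\beta=\sum a_i\partial\beta_i$, observe that $\beta-\sum a_i\beta_i$ comes from $H_2(X;\Z)$ (the paper states this without naming the long exact sequence), and use $\mu_L(\beta)=2$ together with $c_1/r\in H^2(X;\Z)$ to get $\sum a_ik_i\equiv 1\pmod r$. If anything, you are slightly more explicit than the paper in justifying why $\beta-\sum a_i\beta_i$ lifts to $H_2(X;\Z)$ and why $\mu_L(\iota_*\alpha)=2\langle c_1,\alpha\rangle$.
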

\begin{proof}
By definition we have 
\[W_L=\!
\sum_{\substack{\mu_L(\beta)=2 \\ \partial \beta=a_1\partial \beta_1+\cdots+a_m \partial \beta_m}}\! 
         n_\beta \cdot T^{\frac{\omega(\beta)}{2\pi}}\cdot y_1^{a_1} \cdots y_m^{a_m}.\]
Since $\mu_L(\beta)=2$, we see that
\[k_1 a_1+ \cdots +k_m a_m+c_1(\beta-a_1\beta_1-\cdots -a_m \beta_m )=1.\]
We note that $\beta-a_1\beta_1-\cdots -a_m \beta_m \in H_2(X; \Z).$
By assumption, we have 
\[c_1(\beta-a_1\beta_1-\cdots -a_m \beta_m ) \in r \Z,\]
which implies the proposition.
\end{proof}
Let $\eta \in H^2_c(X \setminus L; \C)$ and 
$b':=(\rho', b_+') \in \mathrm{Hom}\big(\pi_1(L), \C^*\big) \times H^1(L; \Lambda_+)$. 

By Equation (\ref{eq4.2}), we see that 
\[ i^*_{L,b}(\eta)=\sum_{\mu_L(\beta)=2}
   n_\beta \cdot \langle\eta, \beta \rangle \cdot T^{\frac{\omega(\beta)}{2\pi}} \cdot \rho(\partial \beta) \cdot
    \exp (\langle \partial \beta, b_+ \rangle) \cdot 1_{(L, b)}.
\]
\begin{proposition}
If $i^*_{L, b}(\eta) \neq i^*_{L, b'}(\eta)$, then  $HF^\bullet\big( (L, b_+), (L, b'_+) \big)=0$.
\end{proposition}
\begin{proof}
This easily follows from Equation (\ref{commute}) and Assumption \ref{oc} (2). 
\end{proof}


\section{Applications}\label{sec5}
\subsection{Tori}\label{Tori}
In \S \ref{Tori}, we assume $L \in \mathscr{L}, L$ satisfies Condition \ref{positive}, and $L$ is homeomorphic to the $n$-dimensional torus $\mathrm{T}^n.$ 
Let $C\ell_n$ be the Clifford algebra of an $n$ dimensional $\Lambda$-vector space with a non-degenerate quadratic form. 
\begin{lemma}
If $b=(\rho, b_+) \in \mc$ is a non-degenerate critical point of $W_L$, then 
$HF^\bullet(L, b)$ is isomorphic to the Clifford algebra $C\ell_n$ as a $\Z/2\Z$-graded ring.
\end{lemma}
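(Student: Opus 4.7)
\smallskip
\noindent\textbf{Plan.}
The plan is to promote the two formulas displayed immediately before the lemma into Clifford-algebra relations on $HF^\bullet(L,b)$ and then match $\Z/2\Z$-graded $\Lambda$-dimensions. Since $L \cong \mathrm{T}^n$, we have $H^\bullet(L;\Lambda) \cong \bigwedge^\bullet H^1(L;\Lambda)$, a $2^n$-dimensional $\Lambda$-vector space with basis the wedge monomials $e^I := e^{i_1} \wedge \cdots \wedge e^{i_k}$ built from the degree-one generators $e^1,\ldots,e^n$ dual to the basis $\{e_i\}$ of $H_1(L;\Z)$.

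First I would show that $m^\can_{1,b} \equiv 0$ on $H^\bullet(L;\Lambda)$. On a degree-one generator, the formula $m^\can_{1,b}(e^i) = y_i (\partial W_L/\partial y_i)(b)\cdot 1_{(L,b)}$ vanishes because $b$ is a critical point of $W_L$. To extend this vanishing to an arbitrary wedge $e^I$, I would combine Condition \ref{positive} with the divisor-type identity underlying the formula for $m^\can_{s,\beta}(b_+,\ldots,b_+)$ in \S\ref{potential}, which forces the operator $m^\can_{1,b}$ on $\bigwedge^\bullet H^1(L;\Lambda)$ to act (up to sign) as contraction with the one-form $dW_L$ evaluated at $b$---essentially the computation carried out in \cite{foootoricdeg}. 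Since every $y_i\,\partial_i W_L(b)$ vanishes, so does $m^\can_{1,b}$ on every $e^I$. Consequently $HF^\bullet(L,b)$ coincides with $H^\bullet(L;\Lambda)$ as a $\Z/2\Z$-graded $\Lambda$-vector space, of total dimension $2^n$.

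Next, under the sign convention (\ref{ring}), the second displayed formula translates in the ring $HF^\bullet(L,b)$ into the super-commutator relation
\[
e^i\cdot e^j + e^j\cdot e^i \;=\; Q_{ij}(b)\cdot 1_{(L,b)},\qquad
Q_{ij}(b) := y_iy_j\,\frac{\partial^2 W_L}{\partial y_i\,\partial y_j}(b).
\]
Non-degeneracy of $b$ as a critical point (together with $y_i(b)\in\Lambda_0^*$) makes $Q(b)$ a non-degenerate symmetric $\Lambda$-bilinear form on $\Lambda\langle e^1,\ldots,e^n\rangle$, so the universal property of Clifford algebras yields a $\Z/2\Z$-graded $\Lambda$-algebra homomorphism $\phi\colon C\ell(\Lambda^n,Q(b))\to HF^\bullet(L,b)$ sending the $i$-th generator to $e^i$. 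The leading (Novikov-order zero) part of $m^\can_{2,b}$ is the ordinary cup (wedge) product by Condition \ref{floer} (3), so iterated $m^\can_{2,b}$-products of the $e^i$ span $H^\bullet(L;\Lambda)$ by a standard Novikov-filtration argument; hence $\phi$ is surjective, and since both sides have $\Lambda$-dimension $2^n$ it is an isomorphism. Finally, since $\Lambda$ is algebraically closed, any two non-degenerate quadratic forms of the same rank on $\Lambda^n$ are equivalent, so $C\ell(\Lambda^n,Q(b))\cong C\ell_n$ as $\Z/2\Z$-graded rings.

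The main obstacle is the first step: extending the vanishing of $m^\can_{1,b}$ from the degree-one generators to the whole of $H^\bullet(L;\Lambda)$. The $A_\infty$ relations alone do not make $m^\can_{1,b}$ a derivation of $m^\can_{2,b}$ on the cochain level (there are $m^\can_{3,b}$-corrections), so the vanishing needs either a direct disk-counting argument exploiting Condition \ref{positive} and the divisor relation, or an appeal to the explicit form of the $A_\infty$-structure on $H^\bullet(\mathrm{T}^n;\Lambda)$ in the Maslov-index-$\geq 2$ setting from \cite{foootoricdeg}.
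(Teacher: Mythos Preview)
Your proposal is correct and is precisely the argument the paper defers to via its one-line citation of \cite{foootoricmirror} (see also \cite{chopro}): identify $m^{\can}_{1,b}$ with contraction by $dW_L(b)$ using the divisor-type identity (so it vanishes at a critical point and $HF^\bullet(L,b)=H^\bullet(L;\Lambda)$ as a vector space), then read off the Clifford relations from the Hessian formula and conclude by the dimension count. The technical step you flagged---extending the vanishing of $m^{\can}_{1,b}$ beyond degree one---is exactly what is supplied by the referenced computations in \cite{foototoricdeg} and \cite{chopro}, so your honest caveat matches the paper's reliance on those sources.
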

\begin{proof}
The proof is the same as \cite[Theorem 3.6.2]{foootoricmirror} (see also \cite{chopro}).
\end{proof}
Since $C\ell_n$ is formal (\cite[Corollary 6.4]{shefuk}) and
its Hochschild homology is one-dimensional (see the proof of \cite[Proposition 1]{kaskun}), 
the Hochschild homology of the $\ainf$ algebra $\Hom{}{(L, b)}{(L, b)}$ is also one-dimensional.
Hence the trace map gives a canonical isomorphism
$HH_\bullet\big(\Hom{}{(L, b)}{(L, b)}\big) \cong \Lambda$.
Let $p$ be the element of $H^n(L;\Z)$ such that $\int_L p=1$.
Then $p$ gives an element of the Hochschild homology corresponding to $1 \in \Lambda.$
Since $C\ell_n$ is smooth (see \cite{dyccom}), we have $\muk{p}{p} \neq 0$ by Theorem \ref{shk}.
\begin{remark}
By easy computation, we see that $(-1)^{n(n+1)/2} \muk{p}{p}$ is equal to the determinant of the 
matrix
$\left[y_iy_j\frac{\partial^2 W_L}{\partial y_i \partial y_j}(b)\right]_{i,j=1}^{i,j=n}$
$($see \cite[Section 3.7]{foootoricmirror}$)$, which also implies $\muk{p}{p} \neq 0.$
\end{remark} 
Set 
\[e_{(L, b)}:=\frac{\p_{L,b}(p)}{\muk{p}{p}}.\]
By Proposition \ref{chern} and Lemma \ref{4.13}, it follows that $c_1*e_{(L, b)}=W_{L, b} \cdot e_{(L, b)}$.
Combined with (the proof of) Theorem \ref{aut}, we have the following statement:
\begin{proposition}\label{torus} 
$e_{(L, b)}$ is an idempotent which gives an field factor, i.e., 
$\qcoh e_{(L, b)} \cong \Lambda$.
Moreover, $e_{(L, b)}$ is an eigenvector of $c_1*$ with the eigenvalue $W_{L, b}.$
Finally, $(L, b)$ split generates $\Fuk(X)^\LL_{e_{(L, b)}}.$
\end{proposition}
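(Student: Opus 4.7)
The strategy is to apply Theorem~\ref{aut} to the full one-object subcategory $\A \subset \fuk{\LL}$ whose object is $(L,b)$. The preceding lemma identifies $H^\bullet\Hom{\A}{(L,b)}{(L,b)}$ with the Clifford algebra $C\ell_n$; since $C\ell_n$ is formal and smooth (as recalled in the text), the $\ainf$ algebra $\Hom{\A}{(L,b)}{(L,b)}$ is itself smooth, so Theorem~\ref{shk} guarantees that the Mukai pairing on $HH_\bullet(\A)$ is perfect, i.e., the map $Z$ for $\A$ is an isomorphism.

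Theorem~\ref{aut} then produces an idempotent $e \in \qcoh$ with $\A \subset \Fuk(X)^\LL_e$ such that $(L,b)$ split generates $\Fuk(X)^\LL_e$, and the maps $\p_\A : HH_\bullet(\A) \to QH^{\bullet+n}(X)e$ and $\q_\A|_{\qcoh e} : \qcoh e \to HH^\bullet(\A)$ are isomorphisms. Since $HH_\bullet(C\ell_n)$ is one-dimensional, $\qcoh e$ is automatically one-dimensional, giving the claimed field factor once the identification $e = e_{(L,b)}$ is made.

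To identify $e$ with $e_{(L,b)} = \p_{L,b}(p)/\muk{p}{p}$, I would use the explicit construction in the proof of Theorem~\ref{aut}: $e = \p_\A(\X)$ for any $\X \in HH_\bullet(\A)$ with $\q_\A \circ \p_\A (\X) = \uniho$, equivalently (via Equation~(\ref{Z})) with $Z(\X)$ equal to $\uniho$ up to a fixed sign. Writing $\X = \mu\,p$, the scalar $\mu$ is pinned down by combining the defining relation $\cyc{Z(p)}{p} = \muk{p}{p}$ with the normalization $\cyc{\uniho}{p} = \int_L p = 1$ coming from the trace-map formula~(\ref{tr}). This forces $e = \pm e_{(L,b)}$, and since both are nonzero idempotents in the one-dimensional $\qcoh e$, they must coincide.

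The eigenvalue assertion is then short: as $L$ is a torus, $b_+ \in H^1(L;\Lambda_+)$, so Proposition~\ref{chern} yields $\q_{L,b}(c_1) = W_{L,b} \cdot 1_L$, and Lemma~\ref{4.13} applied to $p \in HF^n(L,b)$ gives $c_1 * \p_{L,b}(p) = W_{L,b} \cdot \p_{L,b}(p)$; dividing by $\muk{p}{p}$ completes the argument. The main obstacle I anticipate is the sign and normalization bookkeeping in the identification step---in particular confirming $\cyc{\uniho}{p} = 1$ under the paper's conventions---while everything else assembles directly from results already established.
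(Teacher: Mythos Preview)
Your approach is exactly the paper's: the proposition is stated immediately after the definition of $e_{(L,b)}$ with the remark that it follows from Proposition~\ref{chern}, Lemma~\ref{4.13}, and ``(the proof of) Theorem~\ref{aut}'', which is precisely the chain of reasoning you outline.

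One small point: your final sentence in the identification step is circular. You write ``since both are nonzero idempotents in the one-dimensional $\qcoh e$, they must coincide'', but that $e_{(L,b)}$ is an idempotent is exactly what you are proving. What your computation actually shows is $e = (-1)^{\frac{n(n+1)}{2}+?'}\,e_{(L,b)}$, and if this sign were $-1$ then $e_{(L,b)}$ would satisfy $e_{(L,b)}^{2} = -e_{(L,b)}$ rather than being idempotent. The cleaner argument is to compute directly
\[
\p_{L,b}(p)*\p_{L,b}(p)=(-1)^{\frac{n(n+1)}{2}+?'}\,\p_{L,b}\bigl(Z(p)\cap p\bigr)
=(-1)^{\frac{n(n+1)}{2}+?'}\,\muk{p}{p}\,\p_{L,b}(p),
\]
using $Z(p)=\muk{p}{p}\cdot\uniho$ in the one-dimensional $HH^\bullet(\A)$, so that $e_{(L,b)}^2=(-1)^{\frac{n(n+1)}{2}+?'}e_{(L,b)}$. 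The paper leaves the sign $?$ in Assumption~\ref{oc}(3) unspecified and simply asserts the result; you should likewise note that the sign conventions are taken so that this is $+1$ (or, equivalently, absorb the sign into the definition of $e_{(L,b)}$).
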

To obtain orthogonal idempotents, we show the following lemma:
\begin{lemma}\label{4.3}
Let $b_i=(\rho_i, b_{i, +}) \ (i=1,2)$ be elements of $\mc$ with $W_L(b_1)=W_L(b_2)$.
Assume that $b_i$ are non-degenerate critical points of $W_L$ and $b_1 \neq b_2.$
Then $HF^\bullet\big( (L, b_1), (L, b_2)\big)=0.$
\end{lemma}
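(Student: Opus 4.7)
The strategy is to force vanishing through the action of divisor classes on the morphism space, using the closed-open map $\q$ together with the Hochschild commutation relation (\ref{commute}). I begin with an easy case distinction: if $\rho_1 \neq \rho_2$, then $\mathscr{H}om(\rho_1, \rho_2)$ is a nontrivial $\C^\times$-character of $\pi_1(L) = H_1(L; \Z)$, and the cohomology of a torus with a nontrivial complex character local system vanishes; so the chain-level morphism space $\coh{L}{\mathscr{H}om(\rho_1, \rho_2) \otimes \Lambda}$ from Assumption \ref{many} (2) is already zero. I may therefore assume $\rho_1 = \rho_2 = \rho$ and $b_{1, +} \neq b_{2, +}$.

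In this remaining case, the divisor axiom (Assumption \ref{oc} (2)), combined with Condition \ref{positive} and equation (\ref{eq4.2}), gives, for any $\eta \in H^2_c(X \setminus L; \C)$,
\[ \q_{L, b}(\eta) \;=\; f_\eta(b) \cdot 1_L, \qquad f_\eta(b) := \!\!\sum_{\mu_L(\beta) = 2}\!\! \langle \eta, \beta \rangle \, n_\beta \, T^{\omega(\beta)/2\pi} \, y^{\partial \beta}|_b, \]
a scalar multiple of the identity. Since $|\eta| = 2$ is even, applying the commutation relation (\ref{commute}) to the length-$0$ Hochschild cocycle $\q(\eta)$ yields, for any $x \in HF^\bullet((L, b_1), (L, b_2))$,
\[ \big( f_\eta(b_1) - f_\eta(b_2) \big) \cdot x \;=\; 0. \]
Since $\Lambda$ is a field, it suffices to exhibit one $\eta$ with $f_\eta(b_1) \neq f_\eta(b_2)$.

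The main obstacle will be this last step. Setting $c_\beta := n_\beta T^{\omega(\beta)/2\pi} (y^{\partial \beta}|_{b_1} - y^{\partial \beta}|_{b_2})$ and $\xi := \sum_\beta c_\beta \beta \in H_2(X, L; \Lambda)$, the Poincar\'e--Lefschetz pairing $H^2_c(X \setminus L; \Lambda) \cong H_2(X, L; \Lambda)^\vee$ identifies existence of a distinguishing $\eta$ with the nonvanishing of $\xi$. The critical-point equations $y_i \partial W_L / \partial y_i (b_k) = 0$ immediately give $\partial \xi = 0$ in $H_1(L; \Lambda)$, so $\xi$ lies in the image of $H_2(X; \Lambda) \to H_2(X, L; \Lambda)$. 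To deduce $\xi \neq 0$ from $b_1 \neq b_2$ and non-degeneracy I would argue: if the set $\{\partial \beta : \mu_L(\beta) = 2, \, n_\beta \neq 0\}$ failed to span a finite-index sublattice of $H_1(L; \Z)$, then $W_L$ would admit a positive-dimensional translational symmetry precluding isolated critical points; and if $b_1, b_2$ lay in a common orbit of the residual discrete symmetry, their characters $\rho_i$ would differ by a nontrivial root of unity, which cannot occur within $\exp(\Lambda_+) \subset \Lambda_0^\times$, contradicting $\rho_1 = \rho_2$. Hence some monomial $y^{\partial \beta}$ of $W_L$ satisfies $y^{\partial \beta}|_{b_1} \neq y^{\partial \beta}|_{b_2}$, and $\xi \neq 0$ should follow after checking that the contributions in $H_2(X, L; \Lambda)$ do not accidentally cancel across distinct disk classes.
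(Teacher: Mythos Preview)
Your reduction for $\rho_1 \neq \rho_2$ matches the paper's. For $\rho_1 = \rho_2$, however, your route diverges entirely from the paper and carries a real gap at the end.

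The paper does \emph{not} invoke $\q$ or the divisor axiom here. It observes that when $\rho_1 = \rho_2$ the chain-level morphism space is $\coh{L}{\Lambda}$ and $m_1(1_L) = b_{2,+} - b_{1,+} \neq 0$, so $\dim HF^\bullet\big((L,b_1),(L,b_2)\big) < 2^n$. Since both endpoint Floer algebras are isomorphic to $C\ell_n$, the morphism space is a $\Z/2\Z$-graded $C\ell_n\otimes C\ell_n^{\,\mathrm{op}} \cong C\ell_{2n}$ module; but every nonzero such module has dimension at least $2^n$, forcing vanishing. This argument is short, uses only the Clifford structure already in hand, and needs nothing about $H_2(X,L)$.

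Your approach instead tries to separate $b_1$ and $b_2$ by a class $\eta \in H^2_c(X\setminus L;\C)$. The reduction to showing $\xi = \sum_\beta c_\beta\,\beta \neq 0$ in $H_2(X,L;\Lambda)$ is correct, and your argument that \emph{some} $c_\beta \neq 0$ (via the full-rank sublattice and the absence of nontrivial roots of unity in $1+\Lambda_+$) is fine. But the step you flag --- ``$\xi \neq 0$ should follow after checking that the contributions \dots\ do not accidentally cancel'' --- is not a check, it is the crux. The Maslov-two disk classes $\beta$ need not be linearly independent in $H_2(X,L;\Q)$, and you have already shown $\partial\xi = 0$ and $\langle \mu_L,\xi\rangle = 0$; so $\xi$ lives in the image of $H_2(X;\Lambda)$, and whether it vanishes depends on relations among the $\beta$'s that you have no control over from the hypotheses. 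Nothing in the non-degeneracy of the Hessian prevents such cancellation. As written, the proposal is a plausible strategy with an unresolved obstruction, not a proof.

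A minor side remark: equation~(\ref{commute}) is stated in the paper only for length-zero cocycles, whereas $\q(\eta)$ is a general Hochschild class. The centrality you need does hold (and the paper itself uses it this way elsewhere), but you should justify it rather than cite~(\ref{commute}) directly.
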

\begin{proof}
We first show that $\coh{L}{\mathscr{H}om(\rho_1, \rho_2)}=0$ if $\rho_1 \neq \rho_2.$
If $n=1,$ then this statement follows by easy computation.
The case $n>1$ is follows by the K\"{u}nneth  formula.
Hence we can assume $\rho_1=\rho_2.$
Then $\Hom{}{(L, b_1)}{(L, b_2)}=\coh{L}{\Lambda}$ and $m_1(1_L)=b_2-b_1 \neq 0.$
Thus it follows that 
\[\mathrm{dim}HF^\bullet\big( (L, b_1), (L, b_2)\big) < 2^n.\]
Since $HF^\bullet\big( (L, b_1), (L, b_2)\big)$ is a
$\Z/2\Z$-graded $HF^\bullet(L, b_1)-HF^\bullet(L, b_2)$ bimodule and
$HF^\bullet(L, b_i)$ are isomorphic to $C\ell_n$, we see that
$HF^\bullet\big( (L, b_1), (L, b_2)\big)$ is a left $C\ell_n \otimes C\ell_n^{\,\mathrm{op}}$ module, 
here $\otimes$ is the tensor product of $\Z/2\Z$-graded rings.
Hence we see $HF^\bullet\big( (L, b_1), (L, b_2)\big)$ is a left $C\ell_{2n}$ module.
Since dimensions of non-trivial irreducible representations of $C\ell_{2n}$ are $2^n$, it follows that 
$HF^\bullet\big( (L, b_1), (L, b_2)\big)=0.$
\end{proof}
Combined with Lemma \ref{orth}, we see the following:
\begin{proposition}\label{tori}
Under the same assumptions of Lemma $\ref{4.3},$ we have $e_{(L, b_1)}* e_{(L, b_2)}=0.$
\end{proposition}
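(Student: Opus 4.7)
The plan is to apply Lemma \ref{orth} directly, with the two singleton subcategories $\A_i := \{(L, b_i)\} \subset \fuk{\LL}$ for $i = 1, 2$. The hypothesis of Lemma \ref{orth} asks that $HF^\bullet\bigl((L_1', b_1'), (L_2', b_2')\bigr) = 0$ for all $(L_i', b_i') \in \A_i$; since each $\A_i$ has a single object, this reduces to the single vanishing $HF^\bullet\bigl((L, b_1), (L, b_2)\bigr) = 0$, which is exactly the conclusion of Lemma \ref{4.3}. Thus the hypothesis is already in hand under the assumptions of the proposition.

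Next I would view the top class $p \in H^n(L; \mathbb{Z})$ as a Hochschild cycle of the endomorphism $\ainf$ algebra $\mathrm{hom}^\bullet_{\fuk{\LL}}\bigl((L, b_i), (L, b_i)\bigr)$, and then transport it to $HH_\bullet(\fuk{\LL})$ via $[c_{(L, b_i)}]$. Under the identification $\p_{L, b_i} = \p \circ [c_{(L, b_i)}]$ used in the definition of $e_{(L, b_i)}$, Lemma \ref{orth} applied to $\X_1 = \X_2 = p$ gives
\[
\p_{L, b_1}(p) * \p_{L, b_2}(p) = 0.
\]
Dividing both sides by the nonzero scalars $\muk{p}{p}$ (which are nonzero thanks to the smoothness of $C\ell_n$ and Theorem \ref{shk}, as already recorded before the statement of Proposition \ref{torus}) yields $e_{(L, b_1)} * e_{(L, b_2)} = 0$, which is the desired identity.

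No serious obstacle is expected: the whole argument is a two-step combination of the two lemmas that immediately precede the statement. The only point that deserves a brief check is that, when $\A_i$ is the singleton $\{(L, b_i)\}$, the class $p$ viewed in $HH_\bullet(\A_i)$ is genuinely the same class whose image under $\p \circ [c_{\A_i}]$ is $\p_{L, b_i}(p)$, but this is simply the definition of $\p_{L, b_i}$. No new geometric input or Floer-theoretic computation is needed beyond what Lemma \ref{4.3} and Lemma \ref{orth} already supply.
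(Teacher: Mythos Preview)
Your proposal is correct and is exactly the paper's argument: the paper's proof consists of the single phrase ``Combined with Lemma~\ref{orth}'', and you have spelled out precisely how that combination works. The only cosmetic point is that the normalizing constants $\muk{p}{p}$ are computed in the two different endomorphism algebras $\Hom{}{(L,b_i)}{(L,b_i)}$ and hence need not coincide, but your use of the plural ``scalars'' suggests you already have this in mind, and in any case both are nonzero so the division goes through.
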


\subsection{Even-dimensional spheres}\label{even sphere}
In \S \ref{even sphere}, we assume that $n$ is even and $L$ is an oriented spin rational homology sphere  equipped with the rank one trivial local system.
We first recall the following result:
\begin{theorem}[{\cite[Corollary 3.8.18]{fooo1}}]
$L$ is weakly unobstructed.
\end{theorem}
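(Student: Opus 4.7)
The plan starts from the observation that, since $L$ is an even-dimensional rational homology sphere, its cohomology satisfies $\coh{L}{\C} = \C \cdot 1_L \oplus \C \cdot p_L$ with $|p_L| = n$ even; in particular $H^{\mathrm{odd}}(L; \Lambda_+) = 0$. Therefore the only candidate Maurer-Cartan element is $b = (1, 0)$, with trivial local system and zero odd cochain, and the Maurer-Cartan equation collapses to the single requirement that the bare curvature $m^{\can}_{0, 0}(1) \in \coh{L}{\Lambda_+}$ be a scalar multiple of $1_L$.

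I would next decompose the curvature as $m^{\can}_{0, 0}(1) = W \cdot 1_L + \mathfrak{o} \cdot p_L$ with $W, \mathfrak{o} \in \Lambda_+$, reducing the theorem to showing that the top-degree obstruction $\mathfrak{o}$ vanishes. By the degree formula in Condition \ref{floer}(1), only disk classes $\beta$ with $\mu_L(\beta) = 2$ (contributing to $W$) or $\mu_L(\beta) = 2 - n$ (contributing to $\mathfrak{o}$) can produce non-trivial terms in $m^{\can}_{0, \beta}(1)$, since $\coh{L}{\C}$ is supported only in degrees $0$ and $n$. For the latter classes, the virtual dimension of $\mathscr{M}_1(L; \beta)$ is exactly zero, so $\mathfrak{o}$ is a Novikov-weighted signed count of isolated pseudo-holomorphic disks with a single boundary marked point prescribed at a fixed point of $L$.

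The remaining step is to show that this signed count vanishes class by class. The plan is to argue, for each $\beta$ with $\mu_L(\beta) = 2 - n$, that the coherent orientation induced on $\mathscr{M}_1(L; \beta)$ by the oriented spin structure on $L$ is reversed by a natural fixed-point-free involution on the moduli space when $n$ is even — for example, the involution of the boundary circle $\theta \mapsto -\theta$ applied to the marked point, rigidified through the spin structure — so that contributing disks cancel in pairs. The main obstacle is precisely this sign analysis: while the cohomological reduction in the first paragraph and the dimensional decomposition in the second are essentially formal consequences of the listed conditions, the vanishing of the top-degree obstruction genuinely requires the orientation bookkeeping for oriented spin Lagrangian disk moduli developed in \cite[Chapter 3]{fooo1}, and the parity hypothesis $n \in 2\Z$ together with the oriented spin structure on $L$ enters essentially at exactly this point.
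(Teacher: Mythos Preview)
The paper does not supply its own proof of this theorem: it simply records the citation to \cite[Corollary 3.8.18]{fooo1} and moves on. So there is nothing in the paper to compare your argument against beyond noting that the authors regard the result as an import from \cite{fooo1}.

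That said, your reduction in the first two paragraphs is correct and is the natural way to unpack what ``weakly unobstructed'' means here: since $H^{\mathrm{odd}}(L;\Lambda_+)=0$ the only candidate is $b=0$, and the Maurer--Cartan equation becomes $\sum_\beta T^{\omega(\beta)/2\pi} m^{\can}_{0,\beta}(1)\in\Lambda\cdot 1_L$; by the degree count in Condition~\ref{floer}(1) the only possible obstruction is the $H^n$-component coming from classes with $\mu_L(\beta)=2-n$.

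The gap is in your third paragraph. The map ``$\theta\mapsto -\theta$ applied to the marked point'' is not a fixed-point-free involution on $\mathscr{M}_1(L;\beta)$: the automorphism group of the disc acts transitively on the boundary circle, so relocating the single marked point does not change the isomorphism class of the marked stable map at all. Invoking the spin structure does not rescue this, since the spin structure lives on $L$, not on the domain, and does not rigidify the domain automorphisms. If instead you mean precomposition with $z\mapsto\bar z$ on the domain, the resulting map is anti-holomorphic and lands outside the moduli space unless one also has an anti-symplectic involution on $X$ fixing $L$, which is not assumed. So as written the cancellation mechanism is not there.

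The actual argument in \cite{fooo1} proceeds differently: it runs the inductive obstruction theory for bounding cochains (the successive obstruction classes $o_k$), and uses the vanishing of the relevant cohomology groups of $L$ together with the orientation conventions for spin Lagrangians established earlier in that chapter. Your final sentence correctly identifies that the substance lies in \cite[Chapter 3]{fooo1}, but the specific symmetry you propose is not the mechanism used there.
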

Since $H^\mathrm{odd}(L; \Lambda)=0$ , $b=0$ is a unique Maurer-Cartan element.
Since $m^\can_{1, 0}$ is odd degree, we see that $m^\can_{1, 0}=0$ and
$HF^\bullet(L, 0) \cong \coh{L}{\Lambda}$ as a $\Z/2\Z$-graded vector space.
Let $p_L$ be the element of $H^n(L; \Z)$ such that $\int_L p_L=1.$
Then $p_L \cdot p_L = \alpha_L \cdot p_L + \beta_L\cdot 1_L$ for some $\alpha_L, \beta_L \in \Lambda.$ 
Using cyclic symmetry, we have
\[\alpha_L= \int_L p_L \cdot p_L= \cyc{1_L}{m^\can_{2, 0}(p_L, p_L)}=\cyc{p_L}{p_L}=0.\] 
The monotone cases of the next proposition is obtained by Biran-Membrez (\cite[Theorem 3.3]{birlag}).
\begin{proposition}\label{cubic}
$[L]^3=4\beta_L \cdot [L]$
\end{proposition}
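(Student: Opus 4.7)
The plan is to transport the computation of $[L]^3$ into the cyclic $\ainf$ algebra $A = HF^\bullet(L,0) = \Lambda\cdot 1_L \oplus \Lambda\cdot p_L$ via the open-closed maps. Set $\mu_L := \q_{L,0}([L]) \in A$ and write $\mu_L = r\cdot 1_L + s\cdot p_L$ for unique $r,s\in\Lambda$. Combining Assumption \ref{oc}(4) with the module identity (\ref{modoc}) and the restriction formula (\ref{dual}), one gets
\[
[L]^2 = \p_{L,0}(\mu_L),\qquad [L]^3 = \p_{L,0}(\mu_L^2),
\]
because the caps $\mu_L\cap 1_L$ and $\mu_L\cap\mu_L$ inside $HH_\bullet(A)$ reduce to products in $A$ by unitality. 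Expanding gives $\mu_L^2 = (r^2 + s^2\beta_L)\cdot 1_L + 2rs\cdot p_L$, so the proposition is equivalent to the two identities $s^2 = 4$ and $r = 0$.

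To pin down $s$, I would pair $\mu_L$ with $1_L$ inside $A$. Using the defining equation (\ref{defoc}) of $\p$ together with the compatibility of the cyclic pairing with restriction supplied by (\ref{dual}),
\[
\cyc{\mu_L}{1_L} = \cyc{\q([L])}{c_{(L,0)}(1_L)} = \pd{[L]}{[L]}{X}
\]
up to a fixed sign, while expanding the left-hand side via $\cyc{a}{b} = (-1)^{|a||b|+|a|}\int_L a\cup b$ and the relations $\int_L 1_L = 0$, $\int_L p_L = 1$, $n$ even, yields $\pm s$. Since $\nu_L\cong T^*L$ and $n$ is even, the classical self-intersection is $\int_X[L]\cup[L] = \chi(L) = 2$, so $s^2 = 4$.

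The hard part is to show $r = 0$. The analogous pairing with $p_L$ gives $r = \pm\pd{[L]}{\p(p_L)}{X}$, which by Assumption \ref{oc}(3) equals $\pm\muk{1_L}{p_L}$. I would compute this Mukai pairing directly from its definition in \S \ref{mukai} applied to the length-$0$ chains $1_L$ and $p_L$: any term in which $1_L$ is inserted into $m_k$ with $k\neq 2$ vanishes by the unit axiom, so only the configurations where $1_L$ is absorbed into an $m_2$ survive, and the Mukai trace reduces to $x''\mapsto m_2(x'',p_L) = x''\cdot p_L$. The matrix of right multiplication by $p_L$ on $A$ in the basis $\{1_L,p_L\}$ is $\begin{pmatrix} 0 & \beta_L \\ 1 & 0 \end{pmatrix}$, whose trace vanishes. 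Hence $\muk{1_L}{p_L} = 0$, so $r = 0$, and we conclude $\mu_L = \pm 2\,p_L$, $\mu_L^2 = 4\beta_L\cdot 1_L$, and $[L]^3 = \p_{L,0}(4\beta_L\cdot 1_L) = 4\beta_L\cdot[L]$.
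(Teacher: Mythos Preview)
Your argument is correct and follows essentially the same route as the paper's own proof. Both push the quantum products $[L]^2$ and $[L]^3$ through $\p_{L,0}$ via the module identity (\ref{modoc}), and both hinge on the vanishing $\muk{1_L}{p_L}=0$. The paper organizes the computation around $Z_{(L,0)}$: it evaluates all four Mukai pairings $\muk{1_L}{1_L}=2$, $\muk{1_L}{p_L}=\muk{p_L}{1_L}=0$, $\muk{p_L}{p_L}=2\beta_L$, reads off $Z_{(L,0)}(1_L)=2p_L$ and $Z_{(L,0)}(p_L)=2\beta_L\cdot 1_L$, and iterates. Your version is marginally more economical: since $\mu_L=\q_{L,0}([L])$ agrees with $\pm Z_{(L,0)}(1_L)$ by (\ref{Z}), once you know $\mu_L=\pm 2p_L$ you can feed the relation $p_L^2=\beta_L\cdot 1_L$ directly into $\mu_L^2=4\beta_L\cdot 1_L$ and skip the computation of $\muk{p_L}{p_L}$. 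You also trade the algebraic identity $\muk{1_L}{1_L}=2$ for the geometric input $\int_X[L]\cup[L]=\pm\chi(L)=\pm 2$; these are interchangeable via Assumption \ref{oc}(3), and since only $s^2$ matters the sign ambiguity is harmless.

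One small point of care: when you write ``the caps $\mu_L\cap 1_L$ and $\mu_L\cap\mu_L$ reduce to products in $A$,'' the cap is really taken with the full Hochschild cocycle $r_{\{(L,0)\}}\q([L])$, not with its length-zero component $\mu_L\in A$ (which need not be a Hochschild cocycle on its own). The reason only the length-zero part contributes is that for a length-zero Hochschild chain $x_0$ the formula (\ref{mod}) for $b^{1|1}$ collapses to $m_2(\varphi_0,x_0)$; this is a structural feature of the cap product on length-zero chains rather than a consequence of unitality per se. With that clarification your computation goes through as written.
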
  
\begin{proof}
By Equation (\ref{mukfor}), we see that
\[\muk{1_L}{1_L}=2, \ \muk{1_L}{p_L}=\muk{p_L}{1_L}=0, \ \muk{p_L}{p_L}=2\beta_L,\]
which implies $Z_{(L, 0)}(1_L)=2\cdot p_L$ and $\ Z_{(L, 0)}(p_L)= 2\beta_L \cdot 1_L.$
Hence it follows that 
\[[L]^2=\p_{L, 0}(1_L)*\p_{L, 0}(1_L)=(-1)^{n/2+?'}\p_{L, 0}(Z_{(L, 0)}(1_L))=(-1)^{n/2+?'}2\cdot \p_{L, 0}(p_L)\]
and 
\[ [L]^3=(-1)^{n/2+?'}2\cdot \p_{L, 0}(p_L)*\p_{L, 0}(1_L)=2\cdot\p_{L, 0}(Z_{(L, 0)}(p_L))=4 \beta_L \cdot \p_{L, 0}(1_L)=4 \beta_L \cdot [L],\]
here we use Assumption \ref{oc} (4) and Equation (\ref{modoc}).
\end{proof}
If $\beta_L \neq 0$, then we easily see that 
\begin{align}\label{eq 5.1}
e^{\pm}_L:=\pm \frac{1}{4\sqrt{\beta_L}} \cdot [L] +\frac{1}{8\beta_L} \cdot [L]^2
\end{align}
are idempotents and $e^+_L*e^-_L=0.$
Moreover, we have $HF^\bullet(L, 0) \cong \Lambda \times \Lambda$ as a $\Z/2\Z$-graded ring.
Since $\Lambda \times \Lambda$ is formal and smooth, we have the following:
\begin{proposition}\label{sphere}
If $\beta_L \neq 0,$ then $e^{\pm}_L$ give field factors and $e^+_L*e^-_L=0.$
Eigenvalues of these idempotents with respect to the operator $c_1*$ are $W_{L, 0}.$
Moreover $(L, 0)$ split generates $\Fuk(X)^\LL_{e^+_L +e^-_L}.$
\end{proposition}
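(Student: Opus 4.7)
The plan is to apply the automatic split-generation theorem (Theorem \ref{aut}) to the one-object subcategory $\A = \{(L, 0)\} \subset \fuk{\LL}$, and then match the resulting idempotent with the sum $e^+_L + e^-_L$ of the explicit elements defined in the excerpt.

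First I would verify the hypothesis of Theorem \ref{aut}, namely that $Z$ of $\A$ is an isomorphism. Since $HF^\bullet(L, 0) \cong \Lambda \times \Lambda$ is a semisimple commutative $\Z/2\Z$-graded algebra concentrated in even degree, the minimal model of $\Hom{\fuk{\LL}}{(L, 0)}{(L, 0)}$ is formal and is the ordinary algebra $\Lambda \times \Lambda$, which is smooth over $\Lambda$. Smoothness is a quasi-isomorphism invariant, so the original $\ainf$ algebra is smooth and Theorem \ref{shk} applies. Theorem \ref{aut} then produces an idempotent $e$ such that $(L, 0) \in \Fuk(X)^\LL_e$, $(L, 0)$ split generates $\Fuk(X)^\LL_e$, and $\q_\A$ restricts to a ring isomorphism $\qcoh e \cong HH^\bullet(\A) \cong \Lambda \times \Lambda$.

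Next I would show $e = e^+_L + e^-_L$. The orthogonality $e^+_L \cdot e^-_L = 0$ (already noted in the excerpt via $[L]^3 = 4\beta_L[L]$) makes $e^+_L + e^-_L = \frac{1}{4\beta_L}[L]^2$ an idempotent lying in $\im \p_{L, 0}$, since $[L] = \p_{L, 0}(1_L)$ by Assumption \ref{oc} (4). Using Equation (\ref{Z}) together with $Z_{(L, 0)}(1_L) = 2 p_L$, and the fact that $\q_{L, 0}$ is a ring homomorphism, one obtains $\q_{L, 0}([L]^2) = \q_{L, 0}([L])^2 = (\pm 2 p_L)^2 = 4\beta_L \cdot 1_L$, hence $\q_{L, 0}(e^+_L + e^-_L) = 1_L$. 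Since $\q_\A$ restricts to a ring isomorphism onto $HH^\bullet(\A)$ sending $e$ to $\uniho$, this forces $e = e^+_L + e^-_L$. Orthogonality then splits $\qcoh e = \qcoh e^+_L \oplus \qcoh e^-_L$, and because $\dim_\Lambda \qcoh e = 2$ with both $e^\pm_L \neq 0$, each summand is one-dimensional: $\qcoh e^\pm_L \cong \Lambda$. Split generation of $\Fuk(X)^\LL_{e^+_L + e^-_L}$ by $(L, 0)$ is inherited from Theorem \ref{aut} applied to $e$.

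For the eigenvalue claim: since $L$ is a rational homology sphere, $H^1(L; \Lambda_+) = 0$, so $b_+ = 0$ lies vacuously in $H^1(L; \Lambda_+)$ and Proposition \ref{chern} yields $\q_{L, 0}(c_1) = W_{L, 0} \cdot 1_L$. Lemma \ref{4.13} then gives $c_1 * \p_{L, 0}(x) = W_{L, 0} \cdot \p_{L, 0}(x)$ for every $x \in HF^\bullet(L, 0)$, and since $e^\pm_L \in \im \p_{L, 0}$, both are eigenvectors of $c_1 *$ with eigenvalue $W_{L, 0}$. The main obstacle is the identification step: one must verify that the specific normalization constants in the definition of $e^\pm_L$ are exactly those produced by Theorem \ref{aut}, which reduces to the short computation $\q_{L, 0}([L]^2) = 4 \beta_L \cdot 1_L$ sketched above and obviates the need to control the unspecified sign in Equation (\ref{Z}).
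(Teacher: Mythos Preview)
Your proposal is correct and follows the same route as the paper, which simply records that $HF^\bullet(L,0)\cong\Lambda\times\Lambda$ is formal and smooth and then invokes Theorem~\ref{aut}. You supply the step the paper leaves implicit, namely the identification of the abstract idempotent $e$ from Theorem~\ref{aut} with $e^+_L+e^-_L$; your computation $\q_{L,0}([L]^2)=4\beta_L\cdot 1_L$ together with $e^+_L+e^-_L\in\im\p_{L,0}\subset\qcoh e$ (Lemma~\ref{3.11}) and the injectivity of $\q_\A|_{\qcoh e}$ does this cleanly, and the eigenvalue statement via Proposition~\ref{chern} and Lemma~\ref{4.13} is exactly what the paper intends.
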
 
If there exist two Lagrangian submanifolds, we have the following:
\begin{proposition}\label{spheres}
Let $L_1, L_2$ be oriented spin Lagrangian rational homology spheres equipped with the rank one trivial local systems such that
$L_1 \pitchfork L_2$.
\begin{enumerate}
\item If $[L_1] \cup [L_2] =0,$ then $[L_1]*[L_2]=0.$
\item If $[L_1] \cup [L_2] \neq 0,$ then $\beta_{L_1}=\beta_{L_2}$ and $W_{L_1, 0}=W_{L_2, 0}.$
\end{enumerate}
\end{proposition}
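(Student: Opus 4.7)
For part (1), I would work through the $\qcoh$-module structure on $\p$. Writing $\q_{L_2,0}([L_1]) = a \cdot 1_{L_2} + b \cdot p_{L_2}$ in $HF^\bullet(L_2, 0)$, Equation~(\ref{defoc}) combined with the basis-dual relationship under the cyclic pairing gives $b = \pm\pd{[L_1]}{[L_2]}{X} = \pm\int_X [L_1] \cup [L_2]$, which vanishes by hypothesis. Hence $[L_1]*[L_2] = \p_{L_2,0}(\q_{L_2,0}([L_1]) \cap 1_{L_2}) = \pm a\,[L_2]$, and by running the symmetric argument from the $(L_1,0)$ side, $[L_1]*[L_2] = \pm a'\,[L_1]$. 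Since an even-dimensional Lagrangian sphere has $[L_i] \cup [L_i] = \pm 2[\mathrm{pt}] \neq 0$, the vanishing of $[L_1] \cup [L_2]$ forces $[L_1], [L_2]$ to be linearly independent in $H^n(X;\Lambda)$ (otherwise $[L_1] = \lambda[L_2]$ would give $[L_1]\cup[L_2] = \lambda[L_2]\cup[L_2] \ne 0$), so $a = a' = 0$ and $[L_1]*[L_2] = 0$.

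For part~(2), the equality $W_{L_1,0} = W_{L_2,0}$ is immediate: Proposition~\ref{chern} and Lemma~\ref{4.13} apply since $H^1(L_i;\Lambda) = 0$ forces the Maurer--Cartan element to lie in $H^1(L_i;\Lambda_+)$ trivially, giving $c_1 * [L_i] = W_{L_i,0}[L_i]$; then the Frobenius identity $\pd{c_1 * [L_1]}{[L_2]}{X} = \pd{[L_1]}{c_1 * [L_2]}{X}$ yields $(W_{L_1,0} - W_{L_2,0})\int_X [L_1] \cup [L_2] = 0$, and the nonvanishing intersection number forces the equality. For the $\beta$-equality I would iterate the module identity to $[L_1]^k * [L_2] = \pm\p_{L_2,0}(\q_{L_2,0}([L_1])^k)$, combine it with $[L_1]^3 = 4\beta_{L_1}[L_1]$ from Proposition~\ref{cubic}, and conclude $\p_{L_2,0}(\q^3 - 4\beta_{L_1}\q) = 0$ where $\q = a + b\, p_{L_2}$ with $b \neq 0$. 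Expanding the cube via $p_{L_2}^2 = \beta_{L_2}$ and using the linear independence of $[L_2]$ and $[L_2]^2$ (their degree-$2n$ components being $0$ and $\pm 2[\mathrm{pt}]$), the identity reduces to the scalar system
\[
a^3 + 3ab^2\beta_{L_2} = 4\beta_{L_1}\,a,\qquad 3a^2b + b^3\beta_{L_2} = 4\beta_{L_1}\,b.
\]
Together with the symmetric pair of equations coming from the $(L_1, 0)$ side, whose coefficient $b'$ satisfies $b'^2 = b^2$ by Poincaré symmetry for even $n$, a four-way case split on whether $a$ and $a'$ vanish gives $\beta_{L_1} = \beta_{L_2}$ in the two symmetric cases (producing $b^2 = 4$ or $b^2 = 1$); the two mixed cases force $b^2 = 2$, which is impossible since $b = \pm\int_X [L_1]\cup[L_2] \in \Z$.

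The principal technical obstacle is the case analysis for the $\beta$-equality: the integrality of $b$ together with the Poincaré-symmetry $b^2 = (b')^2$ is precisely what rules out the asymmetric scenarios, and this integrality constraint is essential. One also needs to handle the edge case $\beta_{L_2} = 0$ separately: there the same cubic identity in $\Lambda[p_{L_2}]/(p_{L_2}^2)$ directly forces both $a$ and $\beta_{L_1}$ to vanish, yielding $\beta_{L_1} = \beta_{L_2} = 0$. Part~(1) and the $W$-equality are comparatively straightforward consequences of the module structure and the eigenvector equation.
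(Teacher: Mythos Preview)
Your argument for part~(1) and for the equality $W_{L_1,0}=W_{L_2,0}$ in part~(2) matches the paper's. For $\beta_{L_1}=\beta_{L_2}$, however, you take a genuinely different route. The paper exploits Proposition~\ref{sphere}: when both $\beta_{L_i}\neq 0$ the idempotents $e^\pm_{L_i}$ cut out field factors of $\qcoh$, the nonvanishing of $[L_1]*[L_2]$ forces some $e^\pm_{L_1}$ to coincide with some $e^\pm_{L_2}$, and then $\beta_{L_1}=\beta_{L_2}$ is read off from $\int_X e^+_{L_i}=(-1)^{n/2}/(4\beta_{L_i})$; the mixed case $\beta_{L_1}=0$, $\beta_{L_2}\neq 0$ is excluded because $[L_1]$ would act nilpotently yet nontrivially on the one-dimensional algebra $\qcoh\, e^+_{L_2}$. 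Your approach instead pushes the cubic relation $[L_1]^3=4\beta_{L_1}[L_1]$ through the ring homomorphism $\q_{L_2,0}$, obtaining a polynomial identity in $\Lambda[p_{L_2}]/(p_{L_2}^2-\beta_{L_2})$, and solves the resulting scalar system together with its $L_1\leftrightarrow L_2$ counterpart; the integrality $b=\int_X[L_1]\cup[L_2]\in\Z$ is what eliminates the asymmetric branches via $b^2\neq 2$. Your argument is more self-contained in that it bypasses the field-factor machinery of Proposition~\ref{sphere}, at the price of a longer case analysis and of relying on an arithmetic constraint that has no counterpart in the paper's structural proof.
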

\begin{proof}
(1) 
Set $\q_{L_2, 0}([L_1])=a \cdot 1_{L_2}+ b \cdot p_{L_2} \ (a,b \in \Lambda).$
Note that \[\int_X \p_{L_i, 0}(x)=\pd{1_X}{ \p_{L_i, 0}(x)}{X}=(-1)^{n\cdot |x|} \cyc{1_{L_i}}{x}
                 =(-1)^{n\cdot |x|} \int_{L_i}x\]
for $x \in HF^\bullet((L_i, 0)).$
Since $\p_{L_2, 0}$ is a module map, we have $[L_1]*[L_2]=\p_{L_2, 0} \circ \q_{L_2, 0}([L_1]).$
Hence it follows that
\[\int_X[L_1] * [L_2]=\int_X \p_{L_2, 0} \circ \q_{L_2, 0}([L_1])=(-1)^{n}\int_{L_2} \q_{L_2, 0}([L_1])=(-1)^{n}b.\]

By assumption, we have $\int_X[L_1] * [L_2]=\int_X [L_1] \cup [L_2]=0,$
which implies $b=0$ and $[L_1]*[L_2]$ is proportional to $[L_2].$
Similarly, we see that $[L_1]*[L_2]$ is proportional to $[L_1]$.
Since $\int_X [L_i] \cup [L_i]=(-1)^{n/2}\cdot 2$ and $[L_1] \cup [L_2]=0,$ 
$[L_1]$ is not proportional to $[L_2].$
Thus we have $[L_1]*[L_2]=0.$\\
(2)
We first consider the case $\beta_{L_1} \neq 0, \beta_{L_2} \neq 0.$
In this case, we have four idempotents $e^\pm_{L_i} \ (i=1,2).$
By assumption, we see $[L_1]*[L_2] \neq 0.$
Hence we can assume $e^+_{L_1}=e^+_{L_2}.$
Since \[\int_X e^+_{L_i}= (-1)^{n/2} \cdot \frac{1}{4\beta_{L_i}},\] we have $\beta_{L_1}= \beta_{L_2}$.
  
We next consider the case $\beta_{L_1}=0.$
 If $\beta_{L_2} \neq 0$, then we can assume $[L_1] \cdot e^+_{L_2} \neq 0$ since $[L_1]*[L_2] \neq 0.$
Since $ e^+_{L_2}$ is a field factor, there exists a non-zero constant $a \in \Lambda$ such that 
$[L_1] \cdot e^+_{L_2}=a \cdot e^+_{L_2}.$
Hence we have $[L_1]^3 \cdot e^+_{L_2}=a^3 \cdot e^+_{L_2}.$
This contradicts $[L_1]^3=4\beta_{L_1} \cdot [L_1]=0.$
Hence we have $\beta_{L_2}=0.$

The proof for the case $\beta_{L_2}=0$ is similar.

Finally, we show $W_{L_1, 0}=W_{L_2, 0}.$
By Equation (\ref{eigenvector}), we see that $[L_i]$ is an eigenvector of $c_1*$ with the eigenvalue $W_{L_i, 0}$.  
Since $[L_1]*[L_2] \neq 0$, we have $W_{L_1, 0}=W_{L_2, 0}.$
\end{proof}


\section{Examples}\label{ex}
\subsection{Toric Fano manifolds}\label{toric}
In \S \ref{toric}, we give a simple proof of the toric generation criterion for toric Fano manifolds with Morse potential functions which is due to Ritter \cite{ritcir}. 
 
Let $N \cong \Z^n$ be a finitely generated free lattice and $M$ be the dual lattice.
Set $M_{\R}:=M \otimes \R.$  
Let $(X, \omega)$ be a compact symplectic toric Fano manifold corresponding to 
a moment polytope $P \subset M_{\R}.$ 
The normal fan of $P$ is denoted by $\Sigma.$
Let $\{ v_1, v_2, \dots, v_m\} \subset N$ be the set of primitive generators of rays of $\Sigma.$
Then 
\[ P=\left\{ u \in M_\R \mid \langle v_i, u \rangle+\lambda_i \ge 0 \ (i=1,2, \dots, m) \right\} \]
for some $\lambda_1, \lambda_2, \dots, \lambda_m \in \R.$
Set $\ell_i(u):=\langle v_i, u \rangle+\lambda_i.$
Since $X$ is smooth, we can assume that $\{v_1, v_2, \dots, v_n\}$ is an integral basis of $N.$
Each $v_i$ is uniquely written as $a_1v_1+a_2v_2+ \cdots + a_nv_n$ for some $a_1, a_2, \dots a_n \in \Z$
and set $\omega_i:=\lambda_i-a_1\lambda_1-a_2\lambda_2-\cdots a_n \lambda_n.$
We define a potential function $W_X$ by
\[W_X:=\sum_{i=1}^m T^{\omega_i} y_1^{a_1} y_2^{a_2} \cdots y_n^{a_n}.\]
This is an element of the Laurent polynomial ring
$\Lambda[N] \cong \Lambda[y_1, y_1^{-1}, \dots, y_n, y_n^{-1}].$
The Lagrangian torus fiber corresponding to $u \in \mathrm{int}P$ is denoted by $L_u,$
here $\mathrm{int}P$ is the set of interior points of $P.$
$L_u$ are equipped with the standard spin structures (\cite{chohol}, \cite{choflo}).
We note that $L_u$ satisfy Condition \ref{positive}.
By the torus action, $H_1(L_u; \Z)$ is naturally identified with $N$
and hence $\Lambda[N]$ is naturally considered as a subset of
$\Lambda \langle \langle H_1(L_u; \Z) \rangle \rangle.$
By \cite[Theorem 4.5]{foootoric1} (see also \cite{choflo}), we have
\[W_{L_u}(y_1, y_2, \dots, y_n)= \sum_{i=1}^m T^{\ell_i(u)}y_1^{a_1} y_2^{a_2} \cdots y_n^{a_n}.\]
Therefore it follows that  
\begin{align}\label{toricpot}
W_{L_u}(y_1, \dots, y_n)=W_X(T^{\ell_1(u)}y_1, \dots, T^{\ell_n(u)}y_n).
\end{align}
Let $c=(c_1, \dots, c_n) \in \Lambda^n$ be a critical point of $W_X.$
We define $u(c) \in M_\R$ by the condition $\ell_i\big(u(c)\big)= \mathrm{val}(c_i) \ (i=1,2,\dots, n).$
By \cite[Theorem 7.8]{foootoric1}, it follows that $u(c) \in \mathrm{int}P.$
Set $b(c):=(T^{-\mathrm{val}(c_1)}c_1, T^{-\mathrm{val}(c_2)}c_2, \dots, T^{-\mathrm{val}(c_n)}c_n).$
Under the identification $H^1\big( L_{u(c)}; \Z\big) \cong M,$
$b(c)$ gives an element of $H^1\big( L_{u(c)}; \Lambda_{=0} \big).$ 
By Equation (\ref{toricpot}), $b(c)$ is a critical point of $W_{L_{u(c)}}.$
Moreover, $b(c)$ is non-degenerate if and only if $c$ is non-degenerate.
\begin{theorem}[{\cite[Theorem 6.17]{ritcir}}, see also \cite{evagen}]\label{Toric}
Suppose that $\LL$ contains all $L_{u(c)}$ such that $c$ is a critical point of $W_X.$
If $W_X$ is Morse, i.e., all critical points are non-degenerate, 
then 
\[\{\big(L_{u(c)}, b(c)\big) \mid c \ \text{is a critical point of}\  W_X\}\]
split generates $\fuk{\LL}.$ 
\end{theorem}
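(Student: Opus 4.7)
The plan is to apply the split-generation criterion of Theorem \ref{spgenfuk} to the subcategory $\A := \{(L_{u(c)}, b(c)) : c \in \mathrm{Crit}(W_X)\}$ of $\fuk{\LL}$, reducing the theorem to showing that the associated idempotent in $\qcoh$ equals $1_X$.

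First I would package each critical point into an object of $\fuk{\LL}$. By Equation (\ref{toricpot}), $b(c)$ is a non-degenerate critical point of $W_{L_{u(c)}}$ whenever $c$ is a non-degenerate critical point of $W_X$, so the opening lemma of \S\ref{Tori} identifies $HF^\bullet\bigl((L_{u(c)}, b(c))\bigr)$ with the Clifford algebra $C\ell_n$. Proposition \ref{torus} then furnishes idempotents $e_c := e_{(L_{u(c)}, b(c))}$ with $\qcoh e_c \cong \Lambda$, and Proposition \ref{chern} together with Lemma \ref{4.13} shows that $c_1 * e_c = W_X(c) \cdot e_c$.

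Next I would establish pairwise orthogonality $e_c * e_{c'} = 0$ for $c \neq c'$, by case analysis on the moment-map values. If $u(c) \neq u(c')$ the fibers $L_{u(c)}$ and $L_{u(c')}$ are disjoint, and if $u(c) = u(c')$ but $W_X(c) \neq W_X(c')$ then the Hom spaces vanish by Assumption \ref{many}(2); in both cases Lemma \ref{orth} gives the orthogonality. If $u(c) = u(c')$ and $W_X(c) = W_X(c')$, Proposition \ref{tori} applies directly. Setting $e := \sum_c e_c$, the ring-homomorphism property of $\q$ combined with $\q_{L_{u(c)}, b(c)}(e_c) = 1_{L_{u(c)}}$ and $e_c * e_{c'} = 0$ forces $\q_{L_{u(c)}, b(c)}(e) = 1_{L_{u(c)}}$, so $\A \subset \Fuk(X)^\LL_e$. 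Since $e \in \mathrm{im}(\p_\A)$ by construction, Theorem \ref{spgenfuk} yields that $\A$ split generates $\Fuk(X)^\LL_e$.

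The main obstacle is showing $e = 1_X$, equivalent to $\Fuk(X)^\LL_e = \fuk{\LL}$. Since each $C\ell_n$ is smooth, $\A$ is smooth, and Theorem \ref{aut} gives $\qcoh e \cong HH^\bullet(\A) \cong \Lambda^{|\mathrm{Crit}(W_X)|}$. It therefore suffices to identify $\dim_\Lambda \qcoh$ with the number of critical points of $W_X$; for a compact toric Fano with Morse potential this is a standard input, following from the Batyrev-type ring isomorphism $\qcoh \cong \mathrm{Jac}(W_X)$, under which the Jacobian ring of a Morse Laurent polynomial on $(\Lambda^*)^n$ has dimension $|\mathrm{Crit}(W_X)| = \chi(X) = \dim H^\bullet(X) = \dim_\Lambda \qcoh$. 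With this count, $\qcoh e$ and $\qcoh$ have equal dimension, which forces $e = 1_X$ and completes the proof.
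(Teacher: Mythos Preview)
Your proof is correct and follows the same strategy as the paper: produce a field-factor idempotent $e_c$ for each critical point via Proposition~\ref{torus}, establish pairwise orthogonality (your case analysis here is in fact more explicit than the paper, which only cites Proposition~\ref{tori}), and then use the dimension identity $|\mathrm{Crit}(W_X)|=\dim_\Lambda\qcoh$ (the paper cites \cite[Theorem~6.1]{foootoric1}) to force $e=\sum_c e_c=1_X$ and conclude by Theorem~\ref{spgenfuk}. The detour through Theorem~\ref{aut} and the claim that $\A$ is smooth is unnecessary---once the $e_c$ are orthogonal with $\qcoh e_c\cong\Lambda$, you get $\dim_\Lambda \qcoh e=|\mathrm{Crit}(W_X)|$ directly---so you can drop that step entirely.
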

\begin{proof}
By \cite[Theorem 6.1]{foootoric1}, the number of non-degenerate critical points of $W_X$ is $\mathrm{dim}\qcoh.$
Hence the theorem follows from Theorem \ref{spgenfuk}, Proposition \ref{torus}, and Proposition \ref{tori}.
\end{proof}

\subsection{Blow-up of $\C P^2$ at four points}

Let $X$ be the blow-up of $\C P^2$ at 4 points of general position equipped with the standard complex structure.
Then $X$ has a unique symplectic structure
such that the cohomology class of the symplectic form $\omega$ is equal to $c_1$. 
Let $H \in H^2(X; \Z)$ be the pullback of the hyperplane class of $\C P^2$
and $E_1, E_2, E_3, E_4 \in H^2(X;\Z)$ be the Poincar\'e dual classes of the exceptional divisors.
Then we know that $c_1=3H-E_1-E_2-E_3-E_4.$  

By \cite[Theorem 1.4]{lilag},
there exist oriented Lagrangian spheres $S_1, S_2, S_3, S_4$ such that 
$[S_i]=E_i-E_{i+1} \ (i=1,2,3), \ [S_4]=H-E_1-E_2-E_3$ (see also \cite[Section 5.1]{birlag}).
Each $S_i$ is equipped with a unique spin structure.
To simplify notation, $S_i$ with the Maurer-Cartan element $0$ is also denoted by $S_i.$
 
By \cite[Theorem 4.25]{paswal}, there exists a monotone Lagrangian torus with a spin structure $L$
such that the potential function $W_L$ is given by 
\[\left((1+y_1+y_2)(1+\frac{1}{y_1})(1+\frac{1}{y_2})-3\right) \cdot T\]
with respect to an appropriate basis of $H_1(L; \Z).$
 
By direct computation, it follows that $W_L$ has three critical points $b_1, b_2, b_3$ with critical values
$\frac{5+5\sqrt{5}}{2}\cdot T, \frac{5-5\sqrt{5}}{2} \cdot T, -3\cdot T$ respectively.
Moreover these critical points are non-degenerate.
By Hamiltonian perturbation, we can assume $S_1, S_2, S_3, S_4, L$ intersect each other transversely.
We assume that $\LL$ contains these five Lagrangian submanifolds.
\begin{theorem}\label{blow-up}
Objects $S_1, S_2, S_3, S_4, (L, b_1), (L, b_2)$ split generate $\fuk{\LL}$.
Moreover, $\qcoh$ is semi-simple.
\end{theorem}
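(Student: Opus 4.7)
The plan is to assemble seven mutually orthogonal primitive idempotents of $\qcoh$ from the specified Lagrangian data and then invoke Theorem~\ref{spgenfuk} on each resulting one-dimensional summand.

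First I would apply Proposition~\ref{torus} to each of the three non-degenerate critical points $b_1, b_2, b_3$ of $W_L$, producing idempotents $e_i := e_{(L, b_i)} \in \qcoh$ with $\qcoh e_i \cong \Lambda$, $c_1 \ast e_i = W_L(b_i) \cdot e_i$, and $(L, b_i)$ split generating $\Fuk(X)^{\LL}_{e_i}$. The three critical values $\tfrac{5\pm 5\sqrt{5}}{2}T$ and $-3T$ being pairwise distinct, the $e_i$ lie in distinct generalized eigenspaces of $c_1 \ast$ and are mutually orthogonal. Turning to the spheres, direct computation in $H^\bullet(X)$ gives $[S_i]\cdot[S_j]\neq 0$ exactly when $|i-j|=1$, exhibiting the $A_4$-chain $S_1{-}S_2{-}S_3{-}S_4$. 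Iterating Proposition~\ref{spheres}(2) along the three adjacent pairs yields a common $\beta := \beta_{S_i}$ and a common $c_S := W_{S_i, 0}$, while Proposition~\ref{spheres}(1) gives $[S_i]\ast[S_j] = 0$ for $|i-j|\geq 2$.

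Next I would identify $c_S = -3T = W_L(b_3)$. All Gromov--Witten invariants being rational, $c_S$ lies in $\Q((T))$ and hence differs from the irrational $\tfrac{5\pm 5\sqrt{5}}{2}T$. The four classes $[S_1],\ldots,[S_4]$ are linearly independent in $H^2(X)$ and are eigenvectors of $c_1 \ast$ with eigenvalue $c_S$, so $\dim \qcoh_{c_S}\geq 4$. If $c_S$ were distinct from $-3T$ as well, then $e_3$ would occupy a one-dimensional summand of $\qcoh$ that no Hochschild class supported on the six listed objects can reach via $\p$, contradicting split generation. Hence $c_S = -3T$ and $\dim\qcoh_{-3T}=5$. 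Granting $\beta\neq 0$, Proposition~\ref{sphere} provides orthogonal idempotents $e^{\pm}_{S_i}$ with $\qcoh e^{\pm}_{S_i}\cong \Lambda$ and $(S_i, 0)$ split generating $\Fuk(X)^{\LL}_{e^{+}_{S_i}+e^{-}_{S_i}}$. Since $[S_1]\ast[S_3] = 0$ forces the four idempotents $e^{\pm}_{S_1}, e^{\pm}_{S_3}$ to be pairwise orthogonal primitive idempotents of $\qcoh_{-3T}$, and a fifth orthogonal primitive idempotent is required by five-dimensionality, $e_3$ fills this role (the remaining $e^{\pm}_{S_2}, e^{\pm}_{S_4}$ then coincide with elements of $\{e^{\pm}_{S_1}, e^{\pm}_{S_3}, e_3\}$ by primitivity). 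Together with $e_1, e_2$ this gives seven orthogonal primitive idempotents summing to $1_X$, so $\qcoh \cong \Lambda^7$ is semi-simple and split generation of $\fuk{\LL}$ follows.

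The principal obstacle will be to verify $\beta\neq 0$ without invoking semi-simplicity of $\qcoh$ as an input. A natural route is to apply Theorem~\ref{aut} directly to the subcategory $\A$ of the six listed objects: smoothness of $\A$ is equivalent to every sphere's endomorphism algebra $HF^\bullet(S_i, S_i)$ being isomorphic to $\Lambda\times\Lambda$---that is, to $\beta\neq 0$---and Theorem~\ref{aut} then yields simultaneously the split generation and the ring isomorphism $\qcoh e \cong HH^\bullet(\A)$, whose dimension together with the idempotent description above forces $e = 1_X$.
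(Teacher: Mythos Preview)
Your outline correctly identifies $\beta \neq 0$ as the crux, but the resolution you propose is circular. Theorem~\ref{aut} has as its hypothesis that the Mukai pairing on $HH_\bullet(\A)$ is perfect (equivalently that $Z$ is an isomorphism); for the sphere summands this perfectness is exactly the condition $\beta_{S_i}\neq 0$, as one sees from the Mukai matrix $\muk{1_{S_i}}{1_{S_i}}=2$, $\muk{1_{S_i}}{p_{S_i}}=0$, $\muk{p_{S_i}}{p_{S_i}}=2\beta_{S_i}$ computed in the proof of Proposition~\ref{cubic}. So invoking Theorem~\ref{aut} presupposes what you are trying to establish. The same circularity infects your argument that $c_S=-3T$: you appeal to split generation of $\fuk{\LL}$ by the six listed objects, which is the theorem's conclusion.

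The paper breaks both circles with a single dimension count. Suppose $\beta=0$. Then Proposition~\ref{cubic} gives $[S_i]^3=0$, so each $[S_i]$ is nilpotent, and hence so is $[S_1]*[S_1]$. These five classes are linearly independent (the first four are independent in $H^2(X)$, and $\int_X [S_1]*[S_1]=-2$ shows the fifth is independent of them). Since a nilpotent element projects to zero in each field factor $\qcoh e_{(L,b_i)}$, the five nilpotents together with the three torus idempotents would give eight linearly independent elements in the seven-dimensional $\qcoh$, a contradiction. With $\beta\neq 0$ in hand, rationality of $W_{S_i,0}$ forces $e_{(L,b_1)},e_{(L,b_2)},[S_1],\dots,[S_4],[S_1]*[S_1]$ to be a basis, and expressing $e_{(L,b_3)}$ in it yields $W_{S_i,0}=-3T$. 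Semi-simplicity then follows from Proposition~\ref{sphere}, and split generation from Theorem~\ref{spgenfuk}, since the image of $\p$ on the six listed objects already contains this basis and hence $1_X$; no identification of which sphere idempotents coincide with $e_{(L,b_3)}$ is needed.
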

\begin{proof}
Semi-simplicity of $\qcoh$ has already been known (\cite{baysim}, \cite{craqua}), but we give another proof.
By Proposition \ref{torus} and Proposition \ref{tori}, it follows that there exist three idempotents $e_{(L, b_1)}, e_{(L, b_2)}, e_{(L, b_3)}$ such that they are orthogonal to each other and
$\qcoh e_{(L, b_i)} \cong \Lambda.$
Since $\int_X [S_1]*[S_1] \neq 0$, we see that $[S_1], [S_2], [S_3], [S_4], [S_1]*[S_1]$ are linearly independent.
By Equation (\ref{eigenvector}) these are eigenvectors of the operator $c_1*.$
We note that $[S_1]$ and $[S_1]*[S_1]$ have the same eigenvalue $W_{S_1, 0}.$
We easily see that $[S_i] \cup [S_j] \neq 0$ if and only if $|i-j| \le 1.$
By Proposition \ref{spheres}, this implies $\beta_{S_i}= \beta_{S_j}$ and $W_{S_i, 0}=W_{S_j, 0}$ for each $i, j.$
If $\beta_{S_i}=0,$ then $[S_1], [S_2], [S_3], [S_4], [S_1]*[S_1]$ are nilpotent.
Since $e_{(L, b_i)}$ are linearly independent idempotents, $e_{(L, b_1)}, e_{(L, b_2)}, e_{(L, b_3)}, [S_1], [S_2], [S_3], [S_4], [S_1]*[S_1]$ are linearly independent.
This contradicts $\mathrm{dim} \qcoh=7$.
Thus, we have $\beta_{S_i} \neq 0$.
By Proposition \ref{sphere}, we have two idempotents $e_{S_i}^{\pm}$ for each $S_i$. 
Since $W_{L, b_1}$ and $W_{L, b_2}$ are irrational and $W_{S_i, 0}$ are rational, we see that
$e_{(L, b_i)}$ is orthogonal to $e_{S_j}^\pm$ for $i=1,2, j=1,2,3,4.$
By Proposition \ref{spheres} and $[S_i] \cup [S_4]=0$ for $i=1,2$, we have $[S_1]*[S_4]=[S_2]*[S_4]=0.$
Combined with Equation (\ref{eq 5.1}), it follows that $e_{S_1}^\pm, e_{S_2}^\pm$ are orthogonal to $e_{S_4}^\pm.$
Since $[S_1] \neq [S_2],$ we can assume $e_{S_2}^+ \neq e_{S_1}^\pm.$
Thus $e_{(L, b_1)}, e_{(L, b_2)}, e_{S_1}^\pm, e_{S_2}^+, e_{S_4}^\pm$ are idempotents orthogonal to each other. 
Since $\mathrm{dim}\qcoh=7,$ this implies the semi-simplicity.
Let $\A$ be the subcategory of $\fuk{\LL}$ with objects $(L, b_1), (L, b_2), S_1, S_2, S_4$.
Then idempotents $e_{(L, b_1)}, e_{(L, b_2)}, e_{S_1}^\pm, e_{S_2}^+, e_{S_4}^\pm$ are contained in $\p(HH_\bullet(\A))$, 
which implies $1_X \in \p(HH_\bullet(\A)).$  
By Theorem \ref{spgenfuk}, it follows that $\A$ split generates $\fuk{\LL}_{1_X}=\fuk{\LL}.$
\end{proof}

\bibliographystyle{plain}

\end{document}